\documentclass[UTF8]{amsart}
\usepackage{amsthm}
\usepackage[]{amsmath}
\usepackage{amssymb}
\usepackage{enumerate}
\usepackage{tabularx}
\usepackage[]{color}
\usepackage[left=2.6cm, right=2.6cm, top=3.0cm, bottom=3.0cm]{geometry}
\usepackage[colorlinks]{hyperref}
\usepackage{tikz}
\usepackage{multirow}
\usepackage{diagbox}
\usepackage{subcaption}
\usepackage{xcolor}
\usepackage[ruled, vlined]{algorithm2e}

\newtheorem{assumption}{Assumption}
\newtheorem{lemma}{Lemma}
\newtheorem{theorem}{Theorem}

\allowdisplaybreaks[4]

\title[Preconditioned RDA method for EIP]{ Preconditioned
Reconstructed Discontinuous Approximation For Elliptic Interface
Problem on Unfitted Mesh}

\author[R. Li]{Ruo Li} \address{CAPT, LMAM and School of Mathematical
Sciences, Peking University, Beijing 100871, P.R. China}
\email{rli@math.pku.edu.cn}

\author[Q.-C. Liu]{Qicheng Liu} \address{Hangzhou International 
Innovation Institute of Beihang University, Hangzhou 311115, P.R. China}
\email{qcliu@buaa.edu.cn}

\author[F.-Y. Yang]{Fanyi Yang} \address{School of Mathematics
, Sichuan University, Chengdu 610065, P.R. China}
\email{yangfanyi@scu.edu.cn}

\author[S.-H. Zhao]{Shuhai Zhao} \address{Institute of 
Applied Physics and Computational Mathematics, Beijing 100094, P.R. China}
\email{shuhai@pku.org.cn}

\newcommand{\bm}[1]{\boldsymbol{#1}}
\newcommand{\bmr}[1]{\bm{\mr{#1}}}
\newcommand{\lj}{[ \hspace{-2pt} [}
\newcommand{\rj}{] \hspace{-2pt} ]}
\newcommand{\mb}[1]{\mathbb{#1}}
\newcommand{\mc}[1]{\mathcal{#1}}
\newcommand{\mr}[1]{\mathrm{#1}}
\newcommand{\jump}[1]{\lj #1 \rj}
\newcommand{\aver}[1]{ \{#1\}  }
\newcommand{\wt}[1]{ \widetilde{ #1}}
\newcommand{\wh}[1]{ \widehat{ #1}}

\newcommand{\DGenorm}[1]{ \| #1\|_{\mr{DG}}}
\newcommand{\DGsnorm}[1]{ \| #1\|_{*}}
\newcommand{\DGtenorm}[1]{|\!|\!| #1 |\!|\!|_{\mr{DG}}}
\newcommand\anorm[1]{\|#1\|_{\alpha, 2}}
\newcommand\ainnerprod[2]{(#1,#2)_{\alpha, 2}}
\newcommand\eseminorm[1]{|#1|_{\mr{e}}}
\renewcommand{\d}[1]{\mathrm d \boldsymbol{#1}}

\newcommand\NRoman[1]{\uppercase\expandafter{\romannumeral#1}}

\def\Oho{\Omega_{h, 0}}
\def\Ohl{\Omega_{h, 1}}
\def\Ohi{\Omega_{h, i}}
\def\Ohic{\Omega_{h, i}^{\circ}}
\def\OhG{\Omega_h^{\Gamma}}

\def\MTh{\mc{T}_h}
\def\MThi{\mc{T}_{h, i}}
\def\MTho{\mc{T}_{h, 0}}
\def\MThl{\mc{T}_{h, 1}}
\def\MThic{\mc{T}_{h, i}^{\circ}}

\def\MThG{\mc{T}_h^{\Gamma}}

\def\MEh{\mc{E}_h}
\def\MEhI{\mc{E}_h^I}
\def\MEhB{\mc{E}_h^B}
\def\MEhi{\mc{E}_{h, i}}

\def\MEhiI{\mc{E}_{h, i}^{I}}
\def\MEhoI{\mc{E}_{h, 0}^{I}}
\def\MEhlI{\mc{E}_{h, 1}^{I}}

\def\un{\bm{\mathrm{n}}}

\newcommand{\dx}[1]{\mathrm d \boldsymbol{#1}}

\def\dim{\mr{dim}}

\def\mR{\mc{R}}

\def\mP{\mc{P}}
\def\mN{\mc{N}}
\def\mS{\mc{S}}
\def\mD{\mc{D}}
\def\mC{\mc{C}}
\def\mT{\mc{T}}
\def\mA{\mc{A}}
\def\mE{\mc{E}}
\def\MThGic{\mc{T}_{h, i}^{\Gamma, \circ}}

\begin{document}

\maketitle

\begin{abstract}
  In this paper, we develop an efficient preconditioned unfitted
  finite element method for the elliptic interface problem, 
  based on the reconstructed discontinuous approximation.
  The key idea is to impose suitable constraints on the local
  least squares reconstruction. 
  These constraints ensure the stability near cut interface elements
  and, more importantly, establish a norm equivalence between the
  high-order space and the lowest-order piecewise constant space.
  This result allows us to construct an optimal preconditioner
  directly from the lowest-order system on the same unfitted mesh for
  any high-order scheme.
  The resulting method combines a cut discontinuous Galerkin
  formulation with Nitsche's penalty technique.
  The approximation space achieves arbitrarily high order accuracy
  with only one degree of freedom per element.
  We prove optimal error estimates and show that the condition number
  of the preconditioned system is uniformly bounded independently of
  the mesh size, coefficient contrast, and the location of the
  interface relative to the mesh.
  Multigrid algorithms are further designed to efficiently approximate
  the inverse of the lowest-order system matrix.
  Numerical experiments in two and three dimensions confirm the
  optimal convergence rates and demonstrate the robustness and
  efficiency of the proposed preconditioning method.

  \noindent \textbf{keywords}:
  Elliptic Interface Problem; 
  Unfitted Mesh; 
  Cut Finite Element Method;
  Reconstructed Discontinuous Approximation; 
  Multigrid Preconditioning.

\end{abstract}


\section{Introduction}
\label{sec_introduction}
The interface problems serve as fundamental models for a wide range of
physical phenomena involving multiple materials or media.
The governing equations are usually coupled through jump conditions
imposed across the interface. 
As a prototypical example, 
the elliptic interface problems arise in
numerous scientific and engineering applications, including materials
science, porous media flow and multiphase flow 
\cite{Chen1998interface,Leveque1994immersed, Burman2025cut}.
Consequently, the development of numerical methods for elliptic
interface problems has attracted considerable research attention in
recent years.

Finite element methods (FEM) are widely used for solving elliptic
interface problems, and can be roughly classified into 
body-fitted methods and body-unfitted methods. 
In body-fitted methods, the interfaces are required to align with grid
lines, and thus the generation of body-fitted meshes for complex
geometries and interfaces is very challenging and time-consuming. 
We refer to \cite{Chen1998interface, Barrett1987fitted} for some
implementations of body-fitted approaches. 
In contrast, body-unfitted methods have become popular in recent
years, as the mesh generation is completely decoupled from the
geometry description of the interface,
which provides great flexibility in handling complex geometries.
Li proposed the immersed finite element method (IFEM) for the elliptic
interface problem in \cite{Li1998immersed}, which represents a
canonical class of unfitted methods.
The main idea of this method is to handle the discontinuity of the
solution by locally modifying the basis functions or adding some
partial penalty terms on cut elements; we refer to
\cite{Na2014partially, Lin2015partially, Wang2009immersed,
Cao2017immersed, Guo2019higher} for more details.
Another typical unfitted method is the cut finite element method
(CutFEM), also known as the Nitsche-type extended finite element
method (Nitsche-XFEM), proposed by Hansbo and Hansbo in
\cite{Hansbo2002unfittedFEM}.
This method solves the interface problem with two separate finite
element spaces defined on each subdomain, where the jump conditions
are weakly enforced by the Nitsche penalty technique. 
This idea has been applied in many interface problems; see
\cite{Hansbo2014cut,
Guzman2018infsup, Burman2012ficticious, Burman2014fictitious,
Liu2020interface, Li2023curl, Yang2024least} for some applications.
For penalty methods, the small cuts near the interface must be
treated carefully.  
Otherwise, the condition number of the resulting linear system cannot
be uniformly bounded and the convergence may even deteriorate.
In general, certain stabilization strategies are required, such as 
ghost penalty methods, element agglomeration and local
extension techniques; we refer to \cite{Burman2010ghost,
Gurkan2019stabilized, Johansson2013high, Badia2018aggregated,
Neiva2021robust, Huang2017unfitted, Burman2021unfitted} for some
works. 
As in standard FEMs, the discretization usually yields a linear system
with a condition number of $O(h^{-2})$ after stabilization.  Although
this estimate is uniform with respect to the interface position, the
linear system remains ill-conditioned and challenging to solve as the
mesh size tends to zero.
To the best of our knowledge, there are few works on the (multigrid
type) preconditioning method for unfitted methods, especially for
high-order schemes. 
In \cite{Lehrenfeld2017optimal}, a preconditioner for the original
Nitsche-type finite element method \cite{Hansbo2002unfittedFEM} in two
dimensions is proposed, which relies on a stable subspace splitting of
the linear spaces. 
In \cite{Ludescher2020multigrid}, a multigrid preconditioner is
proposed for the stabilized Nitsche-type finite element method, where
the prolongation operator is specially designed for the nonnested
hierarchy of unfitted finite element spaces. 
This method is also established on linear spaces, and a rigorous
analysis of the spectral quality of the preconditioner is missing
\cite{Gross2023analysis}.
The multigrid methods for IFEM can be found in \cite{Chu2024multigrid,
Adams2002immersed}.
We also refer to \cite{Gross2023analysis, Badia2018robust} for 
studies on domain decomposition methods. 

In this paper, we propose a preconditioned discontinuous Galerkin (DG)
method on unfitted meshes for the elliptic interface problem, based
on the reconstructed discontinuous approximation.
In this method, a high-order approximation space is reconstructed by
solving a local least squares fitting problem on every element patch,
with only one degree of freedom per element \cite{Li2016discontinuous,
Li2019eigenvalue, Li2024preconditioned}.
This space is a subspace of the standard DG space and inherits
the DG schemes and their flexibility while significantly reducing the
number of degrees of freedom.
This approximation method was applied to the elliptic interface
problem in \cite{Li2020interface}. 
In the present work, the local least squares fitting problem is
modified by adding suitable constraints, and it is noted that these
constraints are essential in our scheme. 
First, the modification enables us to establish the norm equivalence
property from the physical domain to the active mesh for the
reconstructed space, without additional stabilization mechanisms.
We then prove that the numerical solution achieves optimal convergence
rates under error measurements, and that the resulting discrete linear
system has a condition number of $O(h^{-2})$.
Second, the fact that the reconstructed space maintains the same dimension
regardless of the polynomial degree motivates us to construct a
preconditioner from the lowest-order space, namely the
piecewise constant space. 
Owing to the local constraints, we prove a spectral equivalence between the
lowest-order space and any high-order space, and further show that
the proposed preconditioning method is optimal in the sense that the
condition number of the preconditioned linear system is independent of
the mesh
size, the coefficient contrast and how the interface cuts the
unfitted mesh.  
Low-order preconditioning techniques in standard
finite element methods to precondition the high-order matrix can be
found in \cite{Chalmers2018low, Pazner2023low, Pazner2020efficient}.
Typically, the low-order matrix is constructed on a much finer mesh to
match the dimension of the high-order matrix. 
In contrast, in our method both high-order and lowest-order matrices
are assembled on the same mesh, avoiding the need for any mesh
refinement operator.  
Consequently, the inverse of the matrix arising
from the lowest-order system can serve as an optimal preconditioner
for the high-order scheme. 
For the lowest-order scheme, we follow the idea of the
smoothed aggregation method \cite{Vanek2001convergence} to develop the
multigrid algorithm based on a sequence of successively refined
unfitted meshes. 
The convergence of the multigrid algorithm is also established for the
piecewise constant space.
Numerical tests in two and three dimensions confirm the accuracy of
the proposed scheme and the efficiency of the preconditioning method.
As a numerical observation, our method achieves comparable
numerical error with many fewer degrees of freedom than the cut
discontinuous Galerkin method.
Furthermore, the numerical results demonstrate the robustness of both
the scheme and the preconditioning method for problems with large
coefficient jumps.

The rest of this paper is organized as follows. 
In Section \ref{sec_preliminaries}, the notation related to the mesh
and the interface is given. 
In Section \ref{sec_space}, we introduce the reconstructed
approximation space and present the basic properties of the space. 
In Section \ref{sec_scheme}, we present the penalty schemes for the
elliptic interface problem, along with the corresponding error
estimation in the energy norm and the $L^2$ norm. 
The stability near the interface is also verified in this section.
Section \ref{sec_precondition} is devoted to the preconditioning
method. 
We prove a norm equivalence between the high-order space and the
lowest-order space on the same mesh. 
For the lowest-order linear system, we present two multigrid
algorithms, which can serve as the preconditioner for the high-order
linear system. 
Finally,  in Section \ref{sec_numericalresults} we conduct a series of
numerical experiments in two and three dimensions to validate the
accuracy of the proposed scheme and the efficiency of the
preconditioning method.


\section{Problem setting and preliminaries}
\label{sec_preliminaries}
Let $\Omega \subset \mb{R}^d(d = 2, 3)$ be a convex polygonal
(polyhedral) domain with boundary $\partial \Omega$. 
Let $\Omega_0 \Subset \Omega$ be an open subdomain with 
$C^2$-smooth boundary $\partial \Omega_0$. 
We define $\Gamma := \partial \Omega_0$ as its topological boundary,
which serves as a smooth interface dividing $\Omega$
into two disjoint subdomains. 
Let $\Omega_1 := \Omega \backslash \overline{\Omega}_0$, and we have
that $\Omega_0 \cap \Omega_1 = \varnothing$ and
$\overline{\Omega}_0 \cup \overline{\Omega}_1 = \overline{\Omega}$. 
The problem considered in this paper is the elliptic interface problem,
which seeks the solution $u$ such that
\begin{equation}
  \begin{aligned}
    -\nabla \cdot (\alpha \nabla u) & = f, && \text{in } \Omega_0 \cup
    \Omega_1, \\
    u &= g, && \text{on } \partial \Omega, \\
    \jump{u} = a \un_{\Gamma}, \quad
    \jump{\alpha \nabla u} &= b, && \text{on } \Gamma, 
  \end{aligned}
  \label{eq_problem}
\end{equation}
where $\alpha$ is a piecewise positive constant function in
$\Omega_0 \cup \Omega_1$ that may be discontinuous across the
interface, i.e., $\alpha_i := \alpha|_{\Omega_i} > 0$. 
The jump operators are defined as \eqref{eq_interfaceoperators}.
Given data functions $f \in L^2(\Omega)$, $g \in H^{3/2}(\partial
\Omega)$ and the jump conditions $a \in H^{3/2}(\Gamma)$ and $b \in
H^{1/2}(\Gamma)$, the elliptic interface problem admits a unique
solution $u \in H^2(\Omega_0 \cup \Omega_1)$. We refer to
\cite{Kellogg1972higher, Huang2007uniform} for more details about the
regularity results.

Let $\MTh$ be a quasi-uniform partition of $\Omega$ into a series of
simplices (triangles, tetrahedrons).
For any $K \in \MTh$, we denote by $h_K$ the diameter of $K$
and by $\rho_K$ the radius of the largest ball inscribed in $K$.
Let $h := \max_{K \in \MTh} h_K$ be the mesh size and set
$\rho := \min_{K \in \MTh} \rho_K$. 
The quasi-uniformity of $\MTh$ reads: there exists a
constant $C_\nu$ independent of $h$ such that $h \leq C_\nu \rho$.
For any $K \in \MTh$, we define $\Delta_K := \{ K' \in \MTh: \ 
\overline{K'} \cap \overline{K} \neq \varnothing \}$ as the collection
of the elements sharing a common vertex with $K$, and 
we define $\Delta_{K}^s$ as the set formed by neighbouring elements of
layer $s$ in a recursive manner. 
We initialize $\Delta_{K}^1 := \Delta_K$ and define $\Delta_K^s :=
\bigcup_{K' \in \Delta_{K}^{s-1}} \Delta_{K'}$ for $s \geq 2$.
Since $\MTh$ is quasi-uniform, there exists a constant $C_\Delta$ 
depending on $C_\nu$ such that $K' \subset B(\bm{x}_K,  s C_\Delta 
h_K)(\forall K' \in \Delta_K^s)$, where $B(\bm{z}, r)$ denotes the
disk (ball) centered at $\bm{z}$ with radius $r$, and $\bm{x}_K$
is the barycenter of $K$.
Let $\MEh$ be the set of all $d-1$ dimensional faces of
$\MTh$, and we decompose $\MEh$ into $\MEh = \MEhI \cup \MEhB$, where
$\MEhI$ and $\MEhB$ are collections of all interior faces and 
faces lying on the boundary $\partial \Omega$, respectively.
The faces in $\MEh$ are not required to be aligned with
$\Gamma$. 
For any $e \in \MEh$, we denote by $h_e$ the diameter of $e$.

We further give the notation related to the subdomains $\Omega_i(i =
0,1)$. Let
\begin{displaymath}
  \MThi := \{K \in \MTh \ | \ K \cap \Omega_i \neq \varnothing \},
  \quad \MThic := \{K \in \MThi \ | \ K \subset \Omega_i \}, \quad
  \MThG := \{ K \in \MTh \ | \ K \cap \Gamma \neq \varnothing \},
\end{displaymath}
where $\MThi$ is the minimal set of elements completely covering the
whole domain $\Omega_i$, and $\MThic$ is the set of all interior
elements inside $\Omega_i$, and $\MThG$ is the set of all cut
elements. 
We define $\Ohi :=
\text{Int}(\bigcup_{K \in \MThi} \overline{K} )$, $\Ohic :=
\text{Int}(\bigcup_{K \in \MThic} \overline{K})$, $\OhG :=
\text{Int}(\bigcup_{K \in \MThG} \overline{K})$ as their corresponding
domains.
For the partition $\MThi$, we let $\MEhi$ be the set of all $d - 1$
dimensional faces in $\MThi$, and we let $\MEhiI \subset \MEhi$ 
be the set of all interior faces inside $\Ohi$.
For any $K \in \MTh$ and any $e \in \MEh$, we define $K^i := K \cap
\Omega_i$ and $e^i := e \cap \Omega_i$. For any cut element $K \in
\MThG$, we define $\Gamma_K := K \cap \Gamma$.

We make the following geometrical assumption about the mesh. 
\begin{assumption}
  There exists a constant $s_0$ such that there exist two injective
  relation mappings $(\cdot)^{\sigma_i}(i =0, 1): \MThG \rightarrow
  \MThic$ satisfying $K^{\sigma_i} \in \Delta_K^s \cap \MThic$ with an
  index $s \leq s_0$ for any $K \in \MThG$. 
  \label{as_neighbour}
\end{assumption}
For $i = 0, 1$, since $(\cdot)^{\sigma_i}$ is required to be
injective, $K_-^{\sigma_i} \neq K_+^{\sigma_i}$ for different 
$K_-, K_+ \in \MThG$.
Let $\mc{T}_{h, i}^{\Gamma, 1} := \bigcup_{K \in \MThG} (\Delta_{K}^1
\cap \MThic)$ and  $\mc{T}_{h, i}^{\Gamma, 2} := \bigcup_{K \in
\MThG} (\Delta_{K}^2 \cap \MThic)$.
Because $\Gamma$ is of $C^2$, roughly speaking, there will be
$2\# \MThG \approx 2\# \mc{T}_{h, i}^{\Gamma, 1} \approx \# \mc{T}_{h,
i}^{\Gamma, 2}$ for a sufficiently fine mesh.
Hence, the set $\mc{T}_{h, i}^{\Gamma, 2}$ contains many more
elements than $\MThG$. 
This fact allows us to choose the unique $K^{\sigma_i}$ in
$\Delta_K^2 \cap \MThic$ for $K \in \MThG$, and 
generally there holds $s_0 = 2$ in Assumption \ref{as_neighbour}.
In the computer implementation, for every $K \in \MThG$, if
there exists an element $K' \in  \Delta_{K}^1 \cap \MThic$ that has
not been marked, then we let $K^{\sigma_i} = K'$ and mark $K'$;
otherwise, we choose any element $K' \in  \Delta_{K}^2 \cap \MThic$
that has not been marked as $K^{\sigma_i}$.
We further define the set $\MThGic$ as $\MThGic := \{K' \in \MThic: \
\exists K \in \MThG \text{ such that } K^{\sigma_i} = K' \}$.
For any $K' \in \MThGic$, we formally define an inverse mapping
$(\cdot)^{\varsigma_i}$ by letting $(K')^{\varsigma_i} := K \in \MThG$
such that $K^{\sigma_i} = K'$.

Let us introduce the jump and average operators adopted in the
numerical scheme. 
Let $v$ and $\bm{q}$ be piecewise smooth scalar- and
vector-valued functions on $\MThi(i = 0, 1)$.
For any interior face $e \in \MEhiI$, we let $e = \partial K^+ \cap
\partial K^-$ be shared by two neighbouring elements $K^+, K^- \in
\MThi$, and we let $\un^+, \un^-$ be the unit outward normal vector on
$e$. The jump operators $\jump{\cdot}$ and the average operators
$\aver{\cdot}$ on $e$ are defined as 
\begin{displaymath}
  \begin{aligned}
    \jump{v}|_e & := v^+|_e \un^+ + v^-|_e \un^-, && \jump{\bm{q}}|_e :=
    \bm{q}^+|_e \cdot \un^+ + \bm{q}^-|_e \cdot \un^-,\\
    \aver{v}|_e & := \frac{1}{2} (v^+|_e + v^-|_e ), && \aver{\bm{q}}|_e
    :=\frac{1}{2} (\bm{q}^+|_e + \bm{q}^-|_e), 
  \end{aligned} \quad \forall e \in \MEhiI, 
\end{displaymath}
where $v^{\pm} := v|_{K^{\pm}}$, $\bm{q}^{\pm} := \bm{q}|_{K^{\pm}}$.
For the face $e \in \MEhB$, we let $\un$ be the unit outward normal
vector on $e$, and the trace operators are given as
\begin{displaymath}
  \jump{v}|_e := v|_e \un, \quad \jump{\bm{q}}|_e := \bm{q}|_e \cdot
  \un, \quad \aver{v}|_e := v|_e, \quad \aver{\bm{q}}|_e :=
  {\bm{q}}|_e, \quad \forall e \in \MEhB.
\end{displaymath}
For any cut element $K \in \MThG$, we let $v$ and $\bm{q}$ be the
scalar- and vector-valued functions that may be discontinuous across
$\Gamma_K$.  
In this paper, we use the following harmonic weights as adopted in DG
methods \cite{Cai2011discontinuous, Huang2017unfitted,
Ern2009discontinuous} to obtain the robustness for coefficients with
large jumps: 
\begin{equation}
  w^0 := \frac{\alpha_1}{\alpha_0 + \alpha_1}, \quad w^1 :=
  \frac{\alpha_0}{\alpha_0 + \alpha_1}, \quad
  \aver{\alpha}_{w} := \frac{2\alpha_0 \alpha_1}{\alpha_0 +
  \alpha_1},
  \label{eq_weights}
\end{equation}
where $\alpha_{\min} \leq \aver{\alpha}_w \leq \alpha_{\max}$,
$\aver{\alpha}_w \leq 2 \alpha_{\min}$ with $\alpha_{\min(\max)} :=
\min(\max)(\alpha_0, \alpha_1)$.
The jump and the average operators on $\Gamma$ are defined as 
\begin{equation}
  \begin{aligned}
    \jump{v}|_{\Gamma_K} &:= (v^0|_{\Gamma_K} -
    v^1|_{\Gamma_K})\un_{\Gamma}, && \jump{\bm{q}}|_{\Gamma_K} :=
    (\bm{q}^0|_{\Gamma_K} - \bm{q}^1|_{\Gamma_K}) \cdot \un_{\Gamma},
    \\
    \aver{v}_{w}|_{\Gamma_K} & := w^0 v^0|_{\Gamma_K} + w^1
    v^1|_{\Gamma_K},&&  \aver{\bm{q}}_{w}|_{\Gamma_K} := w^0
    \bm{q}^0|_{\Gamma_K} + w^1 \bm{q}^1|_{\Gamma_K},\\
    \aver{v}^{w}|_{\Gamma_K} & := w^1 v^0|_{\Gamma_K} + w^0
    v^1|_{\Gamma_K},&&  \aver{\bm{q}}^{w}|_{\Gamma_K} := w^1
    \bm{q}^0|_{\Gamma_K} + w^0 \bm{q}^1|_{\Gamma_K},
  \end{aligned} \quad \forall K \in \MThG,
  \label{eq_interfaceoperators}
\end{equation}
where $v^i := v|_{K^i}$, $\bm{q}^i := \bm{q}|_{K^i}$, and 
$\un_{\Gamma}$ denotes the unit normal vector on $\Gamma$
pointing from $\Omega_0$ to $\Omega_1$. 

For a bounded domain $D$, $H^r(D)$ denotes the usual Sobolev spaces
with the regular exponent $r \geq 0$, and the standard definitions of
the inner products, seminorms and norms of $H^r(D)$ are followed.
For $r = 0$, the space $H^0(D)$ coincides with the space $L^2(D)$.
Throughout this paper, $C$ and $C_i(i = 0, 1, \ldots)$ are
generic constants that may vary in different occurrences but are
always independent of $h$, the coefficient $\alpha$ and how the
interface $\Gamma$ intersects the mesh $\MTh$.
In the error estimation, we need the Sobolev extension theory and
the $H^1$ trace estimate on the interface. 
For $i = 0, 1$, we assume that there
exists an extension operator $E^i: H^t(\Omega_i) \rightarrow
H^t(\Omega)(t \geq 2)$ such that \cite[Chapter 5]{Adams2003sobolev}
\begin{equation}
  (E^i v)|_{\Omega_i} = v, \quad \|E^i v \|_{H^s(\Omega)} \leq C \| v
  \|_{H^s(\Omega_i)} , \quad 0 \leq s \leq t, \quad \forall v \in
  H^t(\Omega_i).
  \label{eq_Sobolev}
\end{equation}
The $H^1$ trace estimate on the interface reads: there exists a
constant $C$ such that \cite{Hansbo2002unfittedFEM, Wu2012unfitted}
\begin{equation}
  \| v \|_{L^2(\Gamma_K)}^2 \leq C ( h_K^{-1} \| v \|_{L^2(K)}^2 + h_K
  \| \nabla v \|_{L^2(K)}^2), \quad \forall v \in H^1(K), \quad
  \forall K \in \MThG.
  \label{eq_H1trace}
\end{equation}


\section{Discontinuous Reconstructed Approximation}
\label{sec_space}
In this section, we introduce the approximation space by defining two
linear reconstruction operators $\mR^{m, i}(i = 0, 1)$ in the domain
$\Ohi$.
Let $V_{h, i}:= \{v_h \in L^2(\Ohi): \  v_h|_K \in \mP_0(K), \ \forall
K \in \MThi\}$ be the piecewise constant space over $\MThi$.
The linear operator $\mR^{m, i}$ reconstructs the lowest-order
space $V_{h, i}$ into a piecewise high-order polynomial space over
$\MThi$.
Following the idea in \cite{Li2020interface, Li2024preconditioned},
the construction of $\mR^{m, i}(i = 0, 1)$ includes two steps.

\textbf{Step \NRoman{1}}. We construct a wide element patch $\mS_K^i$
for each element in $\MThi$. The patch consists of some neighbouring
elements around $K$.
We prescribe a threshold $\mN_m$ to control the size of the patch; its
value will be specified later.
Let $\mS_{K, s}^i := \Delta_{K}^s \cap \MThi$, and we first seek a
depth $t$ such that  $\#\mS_{K, t}^i < \mN_m \leq \#\mS_{K, t+1}^i$.
It is noted that $\Delta_K^s$ is defined recursively, and
such $t$ can be readily computed by a recursive algorithm from the
definition of $\mS_{K, s}^i$; we denote by $t_{K, m}^i := t$ the
recursive depth for $K$.
Then, we sort the elements in $\mS_{K, t+1}^i \backslash \mS_{K, t}^i$
by their distances to $K$, where the distance between two elements is
measured by the distance between their barycenters.
We let $\mS_K^i =\mS_{K, t}^i$ and add the first $\mN_m - \#
\mS_{K, t}^i$ elements in $\mS_{K, t+1}^i \backslash \mS_{K, t}^i$ to
$\mS_{K}^i$. 
As a result, we have collected exactly $\mN_m$ elements in $\mS_K^i$,
and there holds $\mS_K^i \subset \MThi$.
We define $\mD_K^i := \text{Int}(\bigcup_{K' \in \mS_K^i}
\overline{K'})$ as the corresponding domain to the patch.
Because $\MThG = \MTho \cap \MThl$,
each cut element $K$ has two patches $\mS^0_K$ and $\mS_K^1$, while
all interior elements have only one patch.

We make the following assumption on patches, which can be easily  
fulfilled by selecting a slightly larger $\mN_m$.
\begin{assumption}
  For any interior element $K \in \MThGic(i = 0, 1)$, there holds
  $K^{\varsigma_i} \in \mS_K^i$.
  \label{as_patch}
\end{assumption}

\textbf{Step \NRoman{2}.} Next, we solve a least squares fitting
problem per element patch in $\MThi$.
For any $K \in \MThi$, we define the set 
\begin{displaymath}
  \mC_K^i := \left\{\begin{aligned}
    &\{\bm{x}_K, \bm{x}_{K^{\varsigma_i}}\}, && \text{for interior
    $K \in \MThGic$}, \\
    & \{\bm{x}_K\}, && \text{for cut $K \in \MThi \backslash
    \MThGic$}. \\
  \end{aligned}\right.
\end{displaymath}
Let $v_{h, i} \in V_{h, i}$ be any piecewise constant function on
$\MThi$, and for any $K \in \MThi$, we solve a constrained least
squares problem on each patch $\mS_K^i$, which depends on barycenters
of all elements in $\mS_K^i$ and the set $\mC_K^i$: 
\begin{equation}
  \begin{aligned}
    \mathop{\arg\min}_{p \in \mP_m(\mD^i_{K})}&  \ \sum_{K' \in
    \mS^i_K} ( p(\bm{x}_{K'}) - v_{h, i}(\bm{x}_{K'}))^2,  \quad
    \text{s.t. } p(\bm{y}) = v_{h, i}(\bm{y}), \quad \forall \bm{y}
    \in \mc{C}_K^i.
  \end{aligned}
  \label{eq_leastsquares}
\end{equation}
We make the following geometrical assumption to ensure the existence
and the uniqueness of the solution to \eqref{eq_leastsquares}.
\begin{assumption}
  For every patch $\mS_K^i(\forall K \in \MThi, i = 0, 1)$, any
  polynomial $p \in \mP_m(\mD_K^i)$ satisfying $p(\bm{x}_{K'}) = 0(
  \forall K' \in \mS_K^i)$ implies $p \equiv 0$.
  \label{as_uni}
\end{assumption}
By Assumption \ref{as_uni}, the prescribed threshold $\mN_m$ is
required to be greater than $\dim(\mP_m)$, and this condition can be
readily satisfied by selecting a bit larger $\mN_m$ in practice.
Moreover, the barycenters of elements in $\mS_K^i$ are required not to
lie on an algebraic curve (surface) of degree $m$.
Let us show that the solution of the problem \eqref{eq_leastsquares}
is unique.
\begin{lemma}
  For each $K \in \MThi(i = 0, 1)$, the problem
  \eqref{eq_leastsquares} admits a unique solution.
  \label{le_unique}
\end{lemma}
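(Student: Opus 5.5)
The plan is to treat \eqref{eq_leastsquares} as a finite-dimensional equality-constrained quadratic minimization. We prove that the feasible set is a nonempty affine subspace and that the objective is strictly convex on it. Existence and uniqueness then follow from standard arguments.

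\emph{Feasibility.} First we check that the constraint set $\{p \in \mP_m(\mD_K^i) : p(\bm{y}) = v_{h,i}(\bm{y}),\ \forall \bm{y} \in \mC_K^i\}$ is nonempty. If $\mC_K^i = \{\bm{x}_K\}$, the constant polynomial $p \equiv v_{h,i}(\bm{x}_K)$ is feasible. If $\mC_K^i = \{\bm{x}_K, \bm{x}_{K^{\varsigma_i}}\}$, the two points are distinct because $K \neq K^{\varsigma_i}$: one lies in $\MThic$ and the other in $\MThG$. In that case we take the affine polynomial
\begin{displaymath}
  p(\bm{x}) = v_{h,i}(\bm{x}_K) + \frac{(\bm{x}-\bm{x}_K)\cdot(\bm{x}_{K^{\varsigma_i}}-\bm{x}_K)}{|\bm{x}_{K^{\varsigma_i}}-\bm{x}_K|^2}\,\bigl(v_{h,i}(\bm{x}_{K^{\varsigma_i}})-v_{h,i}(\bm{x}_K)\bigr),
\end{displaymath}
which lies in $\mP_m$ for $m \geq 1$ and interpolates both values. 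For $m = 0$ the reconstruction is trivial, so we would note that the two-point constraint is only used for $m \geq 1$. Here Assumption~\ref{as_patch} guarantees $K^{\varsigma_i} \in \mS_K^i$, so both constraint points are barycenters of patch elements. The feasible set is therefore $p_0 + W$, where $p_0$ is this particular solution and $W := \{q \in \mP_m : q(\bm{y}) = 0,\ \forall \bm{y} \in \mC_K^i\}$ is a linear subspace.

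\emph{Strict convexity.} Write $J(p) = \sum_{K' \in \mS_K^i} (p(\bm{x}_{K'}) - v_{h,i}(\bm{x}_{K'}))^2$. Then $J(p_0 + q)$ is a quadratic function of $q \in W$ whose quadratic part is $\|q\|_*^2 := \sum_{K'\in\mS_K^i} q(\bm{x}_{K'})^2$. By Assumption~\ref{as_uni}, $\|q\|_* = 0$ implies $q \equiv 0$, so $\|\cdot\|_*$ is a norm on $\mP_m(\mD_K^i)$, and hence on $W$. Consequently $J$ restricted to the affine space $p_0 + W$ is a strictly convex, coercive quadratic on a finite-dimensional space, and it has exactly one minimizer.

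Equivalently, one can write down the KKT system with Lagrange multipliers and show its matrix is nonsingular. This needs the Gram matrix to be positive definite, which is again Assumption~\ref{as_uni}. It also needs the constraint evaluation functionals at $\mC_K^i$ to be linearly independent on $\mP_m$, which holds because the points are distinct and $m \geq 1$ (or there is only one point). The only real care point is the feasibility step when there are two constraint points; the rest is routine.
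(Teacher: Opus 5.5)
Your proof is correct. It differs from the paper's in scope rather than in its core mechanism.

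\textbf{The paper's route.} The paper proves only uniqueness. It takes two minimizers $p_1, p_2$ and perturbs each along directions $q$ with $q|_{\mC_K^i} = 0$, which gives the first-order orthogonality relation \eqref{eq_qpiuorth}. It then subtracts the two relations and sets $q = p_1 - p_2$, so that Assumption~\ref{as_uni} forces $p_1 = p_2$. Existence of a minimizer, and in particular nonemptiness of the feasible set, is taken for granted.

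\textbf{Your route.} Your strict-convexity argument on the affine set $p_0 + W$ rests on the same fact: Assumption~\ref{as_uni} makes $q \mapsto \sum_{K' \in \mS_K^i} q(\bm{x}_{K'})^2$ definite. Your version also delivers existence. Your explicit feasibility step exposes a hypothesis the paper leaves implicit. When $\mC_K^i$ consists of two distinct barycenters, the constraint is solvable for arbitrary data only when $m \geq 1$. So the lemma as stated genuinely needs $m \geq 1$. The paper's argument cannot detect this, since for $m = 0$ with infeasible data it is vacuously true.

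\textbf{What the paper's route buys.} It produces the orthogonality identity \eqref{eq_qpiuorth} itself, which is reused in the proof of Lemma~\ref{le_RKmistability}. If you adopt your version, you would still need to derive that identity separately as the optimality condition of your minimizer.

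A minor point: avoid the notation $\|\cdot\|_*$ for your discrete norm, since the paper already uses it for an energy norm.
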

\begin{proof}
  Let $p_1$ and $p_2$ be two solutions to \eqref{eq_leastsquares}. 
  Let $q \in \mP_m(\mD_K^i)$ be any polynomial with $q|_{\mC_K^i} =
  0$, and thus $p_i + tq$ meets the constraint of
  \eqref{eq_leastsquares}.
  We have that
  \begin{displaymath}
    \sum_{K' \in \mS_K^i} (p_i(\bm{x}_{K'}) + t q(\bm{x}_{K'}) - v_{h,
    i}(\bm{x}_{K'}))^2 \geq \sum_{K' \in \mS_K^i} (p_i(\bm{x}_{K'}) -
    v_{h, i}(\bm{x}_{K'}))^2.
  \end{displaymath}
  Since $t$ is arbitrary, the above inequality is
  equivalent to 
  \begin{equation}
    \sum_{K' \in \mS_{K}^i} q(\bm{x}_{K'})(p_i(\bm{x}_{K'}) - v_{h,
    i}(\bm{x}_{K'})) = \sum_{K' \in \mS_{K}^i}
    q(\bm{x}_{K'})(p_1(\bm{x}_{K'}) - p_2(\bm{x}_{K'})) = 0.
    \label{eq_qpiuorth}
  \end{equation}
  It is noted that $(p_1 - p_2)|_{\mC_K^i} = 0$.
  In \eqref{eq_qpiuorth}, taking $q = p_1 - p_2$ immediately yields
  that $p_1 - p_2$ vanishes at all barycenters of elements in
  $\mS_K^i$. 
  Then, $p_1 = p_2$ is concluded by Assumption \ref{as_uni}, which
  completes the proof.
\end{proof}
Let $v_{h, \mS_K^i} \in \mP_m(\mD_K^i)$ be the solution of
\eqref{eq_leastsquares}, and it has a linear dependence on the given
function $v_{h, i}$. 
From this fact, we introduce a linear operator $\mR_K^{m, i}:
V_{h, i} \rightarrow \mP_m(\mD_K^i)$ by letting $\mR_K^{m, i} v_{h, i}
:= v_{h, \mS_K^i}$ for any $v_{h, i} \in V_{h, i}$.
Further, 
it is natural to define a global linear operator $\mR^{m, i}$ in a
piecewise manner on $\MThi$, 
\begin{equation}
  \begin{aligned}
    \mR^{m, i}: & V_{h, i} \rightarrow U_{h, i}^m, \\
    & v_{h, i} \rightarrow \mR^{m, i} v_{h, i}, 
  \end{aligned} \quad (\mR^{m, i} v_{h, i})|_K := (\mR_K^{m, i} v_{h,
  i})|_K, \quad \forall K \in \MThi.
  \label{eq_Rmi}
\end{equation}
It can be seen that $\mR^{m, i} v_{h, i}$ is a piecewise
polynomial function of degree $m$. 
Let $U_{h, i}^m := \mR^{m, i} V_{h, i}$ be the image space, which is 
a piecewise polynomial space over $\MThi$. 
Indeed, the reconstructed space $U_{h, i}^m$ will be used as the
approximation space in the scheme to approximate the solution defined
in $\Omega_i$.

We show that the high-order space $U_{h, i}^m$ and the piecewise
constant space $V_{h, i}$ have the same dimension.
\begin{lemma}
  The operator $\mR^{m, i}(i = 0, 1)$ is full-rank and $\dim(U_{h,
  i}^m) = \dim(V_{h, i})$.
  \label{le_fullrank}
\end{lemma}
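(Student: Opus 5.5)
Since $U_{h, i}^m$ is defined as the image $\mR^{m, i} V_{h, i}$, it suffices to show that $\mR^{m, i}$ is injective. Then $\dim(U_{h, i}^m) = \dim(V_{h, i})$ follows from rank--nullity.

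The key observation is that the reconstruction interpolates the original data at each element's own barycenter. For every $K \in \MThi$, the constraint set $\mC_K^i$ always contains $\bm{x}_K$, in both cases of its definition. Hence the unique solution $\mR_K^{m, i} v_{h, i}$ of \eqref{eq_leastsquares}, given by Lemma \ref{le_unique}, satisfies
\[
(\mR_K^{m, i} v_{h, i})(\bm{x}_K) = v_{h, i}(\bm{x}_K) = v_{h, i}|_K .
\]
Since $\bm{x}_K \in K$ and, by \eqref{eq_Rmi}, $(\mR^{m, i} v_{h, i})|_K = (\mR_K^{m, i} v_{h, i})|_K$, we get $(\mR^{m, i} v_{h, i})(\bm{x}_K) = v_{h, i}|_K$ for every $K \in \MThi$.

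Now suppose $\mR^{m, i} v_{h, i} = 0$. Evaluating at each barycenter gives $v_{h, i}|_K = 0$ for every $K \in \MThi$, so $v_{h, i} = 0$ and $\mR^{m, i}$ is injective. Equivalently, the evaluation map $w \mapsto (w(\bm{x}_K))_{K \in \MThi}$ is a left inverse of $\mR^{m, i}$ on $V_{h, i}$.

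Two small points need care. First, $\mR^{m, i}$ is linear; the paper notes this after Lemma \ref{le_unique}, and it follows from uniqueness together with the linearity of the first-order optimality conditions (the orthogonality relation \eqref{eq_qpiuorth} and the linear constraints). Second, the constraints must be consistent, so that the feasible set of \eqref{eq_leastsquares} is nonempty. For interior $K \in \MThGic$ there are two constraint points with generally different prescribed values, and one needs some $p \in \mP_m$ taking them. This holds for $m \geq 1$, since the points $\bm{x}_K$ and $\bm{x}_{K^{\varsigma_i}}$ are distinct and a linear polynomial interpolates any two values at two distinct points. This is already implicit in Lemma \ref{le_unique}. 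I expect no real obstacle here: the whole argument rests on the fact that $\bm{x}_K \in \mC_K^i$.
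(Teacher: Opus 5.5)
Your proof is correct and is the same argument as the paper's. Because $\bm{x}_K \in \mC_K^i$ for every $K$, the constraint gives $(\mR^{m,i}v_{h,i})(\bm{x}_K) = v_{h,i}(\bm{x}_K)$, so the kernel is trivial and linearity yields the dimension count; your remarks on linearity and on feasibility of the two-point constraint for $m \geq 1$ are sound details the paper leaves implicit.
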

\begin{proof}
  By the constraint in \eqref{eq_leastsquares}, 
  it follows that $(\mR^{m, i} v_{h, i})(\bm{x}_K) = v_{h, i}(\bm{x}_K)$
  for any $v_{h, i} \in V_{h, i}$, which directly implies that 
  $\mR^{m, i} v_{h, i} =
  0$ for $v_{h, i} \in V_{h, i}$ only if $v_{h, i} = 0$.
  From the linearity of $\mR^{m, i}$, we know that $\mR^{m, i}$ is
  non-degenerate, and thus the dimensions of 
  $V_{h, i}$ and $U_{h, i}^m$ are the same.
  This completes the proof.
\end{proof}
We note that the constraints in the problem
\eqref{eq_leastsquares} are essential in our method, which allow us to
verify the non-degenerate property of the reconstruction operator, to
ensure the stability near the interface and to develop the
preconditioning method based on the lowest-order space for our
numerical scheme.

From Lemma \ref{le_fullrank}, it is straightforward to outline a set 
of basis functions for the reconstructed space $U_{h, i}^m$. 
Let $\{e_{K, i} \}_{K \in \MThi}$ be basis functions of $V_{h, i}$
such that $e_{K, i}(\bm{x}_{K'}) = \delta_{K, K'}$ for any $K' \in
\MThi$, and we let $\lambda_{K, i} := \mR^{m, i} e_{K, i}$.
Since $\mR^{m, i}$ is full-rank, there holds 
$U_{h, i}^m = \text{span}( \{ \lambda_{K, i} \}_{K \in \MThi})$.
For $e_{K, i}$, we have that $e_{K,
i}|_{\mD_{K'}^i} = 0$ for any $K'$ with $K \not\in \mS_{K'}^i$, which
brings $\mR_{K'}^{m, i} e_{K, i} = 0$. 
This fact gives that $\lambda_{K, i}$ is compactly supported by
$\text{supp}(\lambda_{K, i}) = \bigcup_{K'|K \in \mS_{K'}^i}
\overline{K'}$.
By $\{e_{K, i}\}_{K \in \MThi}$, any $v_{h, i} \in V_{h, i}$ has the
expansion $v_{h, i} = \sum_{K \in \MThi} v_{h, i}(\bm{x}_K) e_{K, i}$.
Hence, $\mR^{m, i} v_{h, i}$ has the following expansion
\begin{equation}
  \mR^{m, i} v_{h, i} = \sum_{K \in \MThi} v_{h, i}(\bm{x}_K)
  \lambda_{K, i}, \quad
  \forall v_{h, i} \in V_{h, i}.
  \label{eq_mRmivhi}
\end{equation}
From this expansion, the operator $\mR^{m, i}$ can be directly
extended to the continuous space $C^0(\Ohi)$.
For any $v_{h, i} \in C^0(\Ohi)$, we define $\mR^{m, i} v_{h, i}$ as
\eqref{eq_mRmivhi}, or equivalently, 
$\mR^{m, i} v_{h, i} = \mR^{m, i} \tilde{v}_{h, i}$, where
$\tilde{v}_{h, i} \in V_{h, i}$ is determined by $\tilde{v}_{h,
i}(\bm{x}_K) = v_{h, i}(\bm{x}_K)$ for any $K \in \MThi$.

We next focus on the approximation property of the reconstruction
operator, which plays an important role in the error estimation. 
The following constants are defined to measure the stability of
$\mR^{m, i}$, 
\begin{equation}
  \begin{aligned}
    \Lambda_m & := \max_{i = 0, 1} \max_{K \in \MThi} (1 + t_m
    \Lambda_{m, K, i} \sqrt{\#\mS_{K}^i}), \quad t_m = \max_{i = 0,
    1} \max_{K \in \MThi} t_{K, m}^i, \\
    \Lambda_{m, K, i}^2 & := \max_{p \in \mP_m(\mD_K^i)} \frac{\|p
    \|_{L^2(K)}^2}{h_K^d \sum_{K' \in \mS_K^i} (p(\bm{x}_{K'}))^2},
    \quad \forall K \in \MThi.
  \end{aligned}
  \label{eq_Lambda}
\end{equation}
We state the following stability result.
\begin{lemma}
  For $i = 0, 1$, there holds
  \begin{equation}
    \|\mR_K^{m, i} v_{h, i} \|_{L^2(K)} \leq C \Lambda_m h_K^{d/2}
    \max_{K' \in \mS_K^i} |v_{h, i}(\bm{x}_{K'})|, \quad \forall K \in
    \MThi, \quad \forall v_{h, i} \in C^0(\Ohi) \cup V_{h, i}.
    \label{eq_RKmistability}
  \end{equation}
  \label{le_RKmistability}
\end{lemma}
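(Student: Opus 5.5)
The plan is to compare the constrained least squares solution with an explicit feasible competitor, and then convert discrete values at the patch barycenters into the $L^2(K)$ norm using $\Lambda_{m,K,i}$. For $v_{h,i}\in C^0(\Ohi)$ the reconstruction only uses the values at barycenters, so it suffices to treat $v_{h,i}\in V_{h,i}$. Set $p:=\mR_K^{m,i} v_{h,i}$ and $M:=\max_{K'\in\mS_K^i}|v_{h,i}(\bm{x}_{K'})|$. By the definition \eqref{eq_Lambda},
\begin{displaymath}
  \|p\|_{L^2(K)} \leq \Lambda_{m,K,i} h_K^{d/2} \Big(\sum_{K'\in\mS_K^i} p(\bm{x}_{K'})^2\Big)^{1/2}.
\end{displaymath}
It therefore remains to show that $\sum_{K'} p(\bm{x}_{K'})^2 \leq C (1+t_m)^2 \#\mS_K^i \, M^2$. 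This bound produces exactly the combination $1+t_m\Lambda_{m,K,i}\sqrt{\#\mS_K^i}$ in $\Lambda_m$, with the additive $1$ absorbing the harmless constants.

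\textbf{Step 1: a feasible competitor $q\in\mP_m(\mD_K^i)$.} We need $q=v_{h,i}$ on $\mC_K^i$ and $|q(\bm{x}_{K'})|\leq C t_m M$ for all $K'\in\mS_K^i$.
\begin{itemize}
  \item If $\mC_K^i=\{\bm{x}_K\}$, take the constant $q\equiv v_{h,i}(\bm{x}_K)$; then $|q|\leq M$.
  \item If $\mC_K^i=\{\bm{x}_K,\bm{x}_{K^{\varsigma_i}}\}$, take the affine function
  \begin{displaymath}
    q(\bm{x}) = v_{h,i}(\bm{x}_K) + \big(v_{h,i}(\bm{x}_{K^{\varsigma_i}})-v_{h,i}(\bm{x}_K)\big)\frac{(\bm{x}-\bm{x}_K)\cdot \bm{d}}{|\bm{d}|^2}, \qquad \bm{d}:=\bm{x}_{K^{\varsigma_i}}-\bm{x}_K .
  \end{displaymath}
  This requires $m\geq 1$, which is the case of interest. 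For $m=0$ the two constraints would generally be incompatible, so I would note that the constraint is only imposed when $m\geq1$.
\end{itemize}
By Assumption \ref{as_patch}, $K^{\varsigma_i}\in\mS_K^i$, so both constrained values are bounded by $M$.

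\textbf{Step 2: the geometric bound, which I expect to be the only delicate point.} Every $K'\in\mS_K^i\subset\Delta_K^{t_{K,m}^i+1}$ lies in $B(\bm{x}_K,(t_m+1)C_\Delta h_K)$, so $|\bm{x}_{K'}-\bm{x}_K|\leq C t_m h$. For the denominator we need $|\bm{d}|\geq c h$. The barycenter of a simplex $K$ is at distance at least $c_d\rho_K$ from $\partial K$, since it lies at $1/(d+1)$ of each height and each height is at least $2\rho_K$. Hence the balls $B(\bm{x}_K,c_d\rho)$ and $B(\bm{x}_{K^{\varsigma_i}},c_d\rho)$ lie in two distinct elements with disjoint interiors. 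This gives $|\bm{d}|\geq 2c_d\rho\geq 2c_d h/C_\nu$ by quasi-uniformity. Consequently $|q(\bm{x}_{K'})|\leq M+2M\cdot C t_m h/(ch)\leq C(1+t_m)M$.

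\textbf{Step 3: minimality and the triangle inequality.} Since $q$ is feasible and $p$ is the minimizer of \eqref{eq_leastsquares},
\begin{displaymath}
  \sum_{K'}(p(\bm{x}_{K'})-v_{h,i}(\bm{x}_{K'}))^2 \leq \sum_{K'}(q(\bm{x}_{K'})-v_{h,i}(\bm{x}_{K'}))^2 \leq C(1+t_m)^2\#\mS_K^i M^2 .
\end{displaymath}
The triangle inequality in $\ell^2$ then gives $\big(\sum_{K'}p(\bm{x}_{K'})^2\big)^{1/2}\leq C(1+t_m)\sqrt{\#\mS_K^i}\,M$. Inserting this into the first display, and using $t_m\geq1$ together with the definition of $\Lambda_m$, yields \eqref{eq_RKmistability}.
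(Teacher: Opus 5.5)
Your proposal is correct and follows essentially the same route as the paper: the same constant or affine competitor interpolating the constraint data, the same barycenter separation $|\bm{x}_K-\bm{x}_{K^{\varsigma_i}}|\geq Ch$ (which you justify more carefully than the paper does), and optimality of the constrained fit combined with the definition of $\Lambda_{m,K,i}$. The only difference is bookkeeping: the paper uses the orthogonality \eqref{eq_qpiuorth} to bound the barycenter values of $p-l$, applies $\Lambda_{m,K,i}$ to $p-l$, and bounds $\|l\|_{L^2(K)}$ through its sup norm, whereas you bound $p-v_{h,i}$ at the barycenters by minimality, apply $\Lambda_{m,K,i}$ to $p$ directly, and then absorb the resulting factor $(1+t_m)$ into $\Lambda_m$ using $t_m\geq 1$.
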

\begin{proof}
  Let $p = \mR_K^{m, i} v_{h,i}$, and the constraint of
  \eqref{eq_leastsquares} indicates that $p(\bm{x}_K) =
  v_{h,i}(\bm{x}_K)$ for $K \in \MThi \backslash \MThGic$. 
  We define $l(\bm{x}) := v_{h, i}(\bm{x}_K)$ as the constant
  function. 
  For $K \in \MThGic$, there holds $p(\bm{x}_K) =
  v_{h,i}(\bm{x}_K)$ and $p(\bm{x}_{K^{\varsigma_i}}) =
  v_{h,i}(\bm{x}_{K^{\varsigma_i}})$.
  Note that $|\bm{x}_{K} - \bm{x}_{K^{\varsigma_i}}| \geq C h_K$,
  we define a linear function $l(\bm{x})$ 
  passing through $(\bm{x}_K, v_{h,i}(\bm{x}_K))$ and
  $(\bm{x}_{K^{\varsigma_i}}, v_{h,i}(\bm{x}_{K^{\varsigma_i}}))$ such
  that $|\nabla l| \leq \frac{|v_{h,i}(\bm{x}_K) -
  v_{h,i}(\bm{x}_{K^{\varsigma_i}})|}{| \bm{x}_K -
  \bm{x}_{K^{\varsigma_i}}|} \leq C h_K^{-1} \max_{K'\in \mS_K^i} |v_{h,
  i}(\bm{x}_{K'})|$.
  Let $q := p - l$, which satisfies the constraint in
  \eqref{eq_leastsquares} for both cases, and we have that
  \begin{equation}
    \| l \|_{L^{\infty}(\mD_K^i)} \leq C t_{K, m}^i h_K |\nabla l| +
    l(\bm{x}_K) \leq C t_{K, m}^i \max_{K' \in \mS_K^i} |
    v_{h, i}(\bm{x}_{K'})|.
    \label{eq_lLinfty}
  \end{equation}
  Combining such $q$ with \eqref{eq_qpiuorth} yields 
  $\sum_{K' \in \mS_K^i} (p(\bm{x}_{K'}) -
  l(\bm{x}_{K'}))(p(\bm{x}_{K'}) - v_{h,i}(\bm{x}_{K'})) = 0$. 
  This orthogonality property further leads to
  \begin{displaymath}
    \sum_{K' \in \mS_K^i} (p(\bm{x}_{K'}) - l(\bm{x}_{K'}))^2 \leq
    \sum_{K' \in \mS_K^i} (l(\bm{x}_{K'}) - v_{h,i}(\bm{x}_{K'}))^2
    \leq C (t_{K, m}^i)^2 \# \mS_K^i \max_{K'  \in \mS_K^i} |
    v_{h,i}(\bm{x}_{K'})|^2,
  \end{displaymath}
  and
  \begin{align*} 
    \| p \|_{L^2(K)}^2 &\leq C(\| p - l \|_{L^2(K)}^2 + \|l
    \|_{L^2(K)}^2) \leq C ( \Lambda_{m, K, i}^2 h_K^d \sum_{K' \in
    \mS_K^i} (l(\bm{x}_{K'}) - v_{h,i}(\bm{x}_{K'}))^2 + h_K^d \| l
    \|_{L^{\infty}(K)}^2) \\
    & \leq C h_K^d (\Lambda_{m, K, i}^2  (t_{K, m}^i)^2 \# \mS_K^i +
    1) \max_{K' \in \mS_K^i} | v_{h, i}(\bm{x}_{K'})|^2 \leq C
    \Lambda_m^2 \max_{K' \in \mS_K^i}      | v_{h, i}(\bm{x}_{K'})|^2,
  \end{align*}
  which gives the estimate \eqref{eq_RKmistability} and completes the
  proof.
\end{proof}
From the stability property \eqref{eq_RKmistability}, it is
straightforward to obtain the approximation estimate \cite[Theorem
3.3]{Li2012efficient}.
\begin{theorem}
  For $i = 0, 1$, there holds
  \begin{equation}
    \| v - \mR_K^{m, i} v \|_{H^t(K)} \leq C \Lambda_m h_K^{s - t}
    \| v \|_{H^s(\mD_K^i)}, \quad \forall K \in \MThi, \quad \forall v
    \in H^s(\Omega),
    \label{eq_RKmiapp}
  \end{equation}
  where $0 \leq t \leq s - 1$ and $2 \leq s \leq m + 1$.
  \label{th_RKmiapp}
\end{theorem}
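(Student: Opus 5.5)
The argument is the usual Bramble--Hilbert one: $\mR_K^{m, i}$ reproduces polynomials of degree $m$, and Lemma \ref{le_RKmistability} gives its stability.

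\textbf{Step 1: polynomial reproduction.} For $q \in \mP_m(\mD_K^i)$, the polynomial $q$ itself satisfies the constraints in \eqref{eq_leastsquares} and makes the objective zero. By Lemma \ref{le_unique} we therefore have $\mR_K^{m, i} q = q$.

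\textbf{Step 2: split with a polynomial.} Fix $K$ and let $v \in H^s(\Omega)$. Since $s \ge 2 > d/2$, $v$ is continuous, so $\mR_K^{m,i} v$ is defined through its nodal values. Choose $q \in \mP_{s-1}(\mD_K^i) \subset \mP_m(\mD_K^i)$ to be an averaged Taylor polynomial of $v$ on a ball $B$ of radius $\sim h_K$ that contains $\mD_K^i$. This ball lies in $B(\bm{x}_K,(t_m+1)C_\Delta h_K)$ and its radius is comparable to $h_K$, since $t_m$ is bounded. By Step 1,
\begin{displaymath}
  v - \mR_K^{m, i} v = (v - q) - \mR_K^{m, i}(v - q).
\end{displaymath}
For the first term, the Bramble--Hilbert lemma gives $\|v-q\|_{H^t(K)} \le C h_K^{s-t}|v|_{H^s(B)}$.

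\textbf{Step 3: the reconstructed term.} $\mR_K^{m, i}(v-q)$ is a polynomial of degree $m$ on $K$, so an inverse estimate gives
\begin{displaymath}
  \|\mR_K^{m, i}(v-q)\|_{H^t(K)} \le C h_K^{-t}\|\mR_K^{m, i}(v-q)\|_{L^2(K)}.
\end{displaymath}
Lemma \ref{le_RKmistability} then bounds the right-hand side by
\begin{displaymath}
  C\Lambda_m h_K^{d/2-t}\max_{K'\in\mS_K^i}|(v-q)(\bm{x}_{K'})| \le C\Lambda_m h_K^{d/2-t}\|v-q\|_{L^\infty(B)}.
\end{displaymath}
The scaled Sobolev embedding $H^s \hookrightarrow L^\infty$ on a ball of radius $\sim h_K$ (valid since $s>d/2$), combined with the Bramble--Hilbert estimates, gives $\|v-q\|_{L^\infty(B)} \le C h_K^{s-d/2}|v|_{H^s(B)}$. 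Together these yield the bound $C\Lambda_m h_K^{s-t}|v|_{H^s(B)}$.

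\textbf{Main obstacle: the domain of the final norm.} The statement uses $\|v\|_{H^s(\mD_K^i)}$, whereas the argument above naturally produces a norm over a ball containing $\mD_K^i$. The patch $\mD_K^i$ may be non-convex and need not be star-shaped with respect to a ball, which Bramble--Hilbert on $\mD_K^i$ itself would require. The approach of \cite[Theorem 3.3]{Li2012efficient}, which I will follow, handles this in one of two ways:
\begin{itemize}
\item use Assumption-type geometric conditions ensuring that $\mD_K^i$ is a union of a bounded number of shape-regular elements, hence a Lipschitz domain of diameter $\sim h_K$ with uniformly controlled constants. Bramble--Hilbert and Sobolev embedding then apply directly on $\mD_K^i$ with constants depending only on $C_\nu$ and $t_m$.
\item alternatively, use the fact that $v$ is defined on all of $\Omega$ and accept the enlarged domain.
\end{itemize}
I would take the first route. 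The only nonroutine point is checking uniformity of these constants over all possible patch configurations, which holds because there are finitely many combinatorial types up to similarity modulo quasi-uniformity. The restriction $t \le s-1$ enters only through the inverse and Bramble--Hilbert steps.
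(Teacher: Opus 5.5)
Your Steps 1--3 (reproduction of $\mP_m$, Lemma~\ref{le_RKmistability} applied to $v-q$, the inverse estimate and the scaled embedding $H^s\hookrightarrow L^\infty$) are the standard argument that the paper delegates to \cite[Theorem 3.3]{Li2012efficient}. They are sound apart from the choice of domain. The step you flag yourself, however, is a genuine gap, and your justification for it is wrong.

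Nothing in Assumptions~\ref{as_neighbour}--\ref{as_uni} makes $\mD_K^i$ Lipschitz, or even gives it a connected interior. The elements taken from $\mS_{K,t+1}^i\setminus\mS_{K,t}^i$ are chosen by barycenter distance, and $\mS_{K,s}^i=\Delta_K^s\cap\MThi$ discards elements near $\Gamma$. So some $K'\in\mS_K^i$ may meet the rest of the patch only at a vertex (or only along an edge if $d=3$). On such a patch the Bramble--Hilbert lemma fails on $\mD_K^i$, and so does the inequality as stated. Take $v\in H^s(\Omega)$ with:
\begin{itemize}
\item $v=0$ on the rest of the patch;
\item $v=P:=\ell^{s-1}$ on $K'$, where $\ell$ is linear, vanishes on the pinch set, and is normalised by $P(\bm{x}_{K'})=1$;
\item the transition to zero made in the gap outside $\mD_K^i$, together with a cutoff at the pinch set (logarithmic in codimension two), which makes $|v|_{H^s(K')}$ as small as desired.
\end{itemize}
Then $\|v\|_{H^s(\mD_K^i)}\lesssim h_K^{d/2-s+1}$. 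On the other hand $v|_K=0$, and generically $\|\mR_K^{m,i}v\|_{L^2(K)}\sim h_K^{d/2}$. So already for $t=0$ the ratio of the two sides grows like $h_K^{-1}$.

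Your claim of ``finitely many combinatorial types up to similarity'' is also false. Quasi-uniformity leaves a continuum of patch shapes, so uniform constants need a chunkiness-type bound, not a count.

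What is missing is an explicit geometric hypothesis on the patches, which the paper does not state either (it only cites). For instance, assume each $\mD_K^i$ is star-shaped with respect to a ball of radius at least $c\,h_K$. Then the Dupont--Scott form of the Bramble--Hilbert lemma, and the Sobolev embedding, hold on $\mD_K^i$ with constants depending only on $c$, $C_\nu$ and $t_m$. Your Steps 2--3 then go through with $B$ replaced by $\mD_K^i$.

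Alternatively, your second route works once the ball is replaced by $B(\bm{x}_K,(t_m+1)C_\Delta h_K)\cap\Omega$; a ball around $\mD_K^i$ may leave $\Omega$, where $v$ is undefined. This set is convex because $\Omega$ is, has diameter $O(h_K)$, and contains the inscribed ball of $K$, of radius at least $h/C_\nu$. So the Bramble--Hilbert and embedding constants are uniform on it. This proves the estimate with that set in place of $\mD_K^i$. By finite overlap, that is all Theorem~\ref{th_error} needs, but it is not the inequality as stated.
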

It can be seen that the local polynomial
$\mR_K^{m, i} v$ has an optimal convergence rate if $\Lambda_m$
admits an upper bound independent of $h$.
We show that such a bound exists if we select  a wide enough element
patch for every element.
In \cite{Li2020interface, Li2012efficient}, we introduce another
constant 
\begin{displaymath}
  \Theta_{m, K, i} := \max_{p \in \mP_m(\mD_K^i)} \frac{\max_{\bm{x}
  \in \mD_K^i} p(\bm{x})}{\max_{K' \in \mS_K^i} p(\bm{x}_{K'})}, \quad
  \forall K \in \MThi, \quad i = 0, 1.
\end{displaymath}
From the inverse inequality, we have $\Lambda_{m, K, i} \leq
\Theta_{m, K, i} \leq C \Lambda_{m, K, i}$ for any $K \in \MThi$.
We prove that there exists a threshold $\mN_m$ depending on
$m$ and $C_{\nu}$, such that under the condition $\# \mS_K^i \geq
\mN_m$ it holds that $\Theta_{m, K, i} \leq 2$.
In this case, the depth $t_m$ still depends only on $m, C_{\nu}$. 
Consequently, $\Lambda_m$ has a uniform upper bound independent of the
mesh size and how the interface cuts the background mesh.
However, this theoretical threshold $\mN_m$ is usually too large and
impractical in the computer implementation. 
From the definition \eqref{eq_Lambda}, the 
constant $\Lambda_{m, K, i}$ equals
the minimum singular value of a local matrix for $K$,
which can be easily obtained.
More details and some numerical tests are provided in Appendix
\ref{sec_app}. 
It can be seen that $\Lambda_m$ is nearly constant if $\mN_m$ is large
enough.
In the computer implementation, the constant $\Lambda_{m, K, i}$ can
serve as an indicator to show whether the patch $\mS_K^i$ is large
enough. 

To close this section, we combine two operators $\mR^{m, 0}$ and
$\mR^{m, 1}$ to introduce a global operator $\mR^m$ and its
corresponding space $U_h^m$ on the whole domain $\Omega$.
Let $\chi_i$ be the characteristic function of $\Omega_i$, and we
let $V_h := V_{h, 0} \cdot \chi_0 + V_{h, 1} \cdot \chi_1$. 
We define the linear operator $\mR^m$ by $\mR^m v_h := \mR^{m, 0}
v_{h, 0} + \mR^{m, 1} v_{h,1}$ for any $v_h \in V_h$ with the
decomposition $v_h = v_{h, 0} \cdot \chi_0 + v_{h, 1} \cdot \chi_1$, 
and let $U_h^m := \mR^m V_h$ be its image space. 
Equivalently, $U_h^m$ can be written as $U_h^m = U_{h,
0}^m \cdot \chi_0 + U_{h, 1}^m \cdot \chi_1$.
Because any $w_h \in U_h^m$ has a unique decomposition $w_h = w_{h,0}
\cdot \chi_0 + w_{h, 1} \cdot \chi_1$ with $w_{h, i} \in U_{h,i}^m$,
we formally introduce two projection operators $(\cdot)^{\pi_i}(i =
0,1)$ such that $w^{\pi_i}_h := w_{h, i} \in U_{h, i}^m$ for any $w_h
\in U_h^m$.
For any $v \in H^s(\Omega_0 \cup \Omega_1)$, by the Sobolev extension
\eqref{eq_Sobolev} $v_i := v|_{\Omega_i}$ can be extended to the whole
domain $\Omega$. 
Naturally, $v$ admits the decomposition $v = v_0 \cdot \chi_0 + v_1
\cdot \chi_1$. 
We further formally extend the projection operator $(\cdot)^{\pi_i}$ 
to $v$ by setting $v^{\pi_i} := v_i \in H^s(\Omega)$.


\section{Numerical Scheme}
\label{sec_scheme}
We present the numerical scheme for the elliptic interface problem
\eqref{eq_problem}, based on Nitsche's penalty method and the
reconstructed space $U_h^m$. 
We define the following discrete variational problem to seek the
numerical solution $u_h \in U_h^m$: 
\begin{equation}
  a_{h}(u_h, v_h) = l_h(v_h), \quad \forall v_h \in U_h^m,
  \label{eq_dvarproblem}
\end{equation}
where the bilinear form $a_h(\cdot, \cdot)$ is defined as
\begin{align*}
  a_h(v_h, &w_h) :=  \sum_{K \in \MTh} \int_{K^0 \cup K^1} \alpha
  \nabla v_h \cdot \nabla w_h \dx{x} - \sum_{e \in \MEh} \int_{e^0
  \cup e^1} (\aver{\alpha \nabla v_h} \cdot \jump{w_h} + \aver{\alpha
  \nabla w_h} \cdot \jump{v_h}) \dx{s} \\
  - & \sum_{K \in \MThG} \int_{\Gamma_K} ( \aver{\alpha \nabla v_h}_w
  \cdot \jump{w_h} + \aver{\alpha \nabla w_h}_w \cdot \jump{v_h})\dx{s}
    + \sum_{K \in \MThG}\int_{{\Gamma}_K}\frac{\mu \aver{\alpha}_w}{h_K} 
    \jump{v_h}  \cdot \jump{w_h} \dx{s}  \\ 
  + & \sum_{e \in \MEhoI}  \int_e \frac{\mu \alpha_0}{h_e}\,
   \jump{v_h^{\pi_0}} \cdot  \jump{w_h^{\pi_0}} \dx{s} 
  + \sum_{e \in \MEhlI \cup \MEhB}  \int_e \frac{\mu \alpha_1}{h_e}\,
  \jump{v_h^{\pi_1}} \cdot \jump{w_h^{\pi_1}} \dx{s},
\end{align*}
for any $v_h, w_h \in U_h := U_h^m + H^2(\Omega_0 \cup \Omega_1)$, 
and $\mu > 0$ denotes the penalty parameter.
The linear form $l_h(\cdot)$ is given as
\begin{displaymath}
  \begin{aligned}
    l_h(v_h) := \sum_{K \in \MTh} & \int_{K^0 \cup K^1} f v_h \d{x}
    - \sum_{e \in \MEhB} \int_e g \aver{\alpha \nabla v_h} \d{s} 
    + \sum_{e \in \MEhB} \int_e \frac{\mu \alpha}{h_e} g v_h \d{s}\\
    & + \sum_{K \in \MThG} \int_{\Gamma_K} b \aver{v_h}^w \d{s}
    - \sum_{K \in \MThG} \int_{\Gamma_K} a \aver{\alpha \nabla v_h}_w
     \d{s}  + \sum_{K \in \MThG} \int_{\Gamma_K} 
     \frac{\mu \aver{\alpha}_w}{h_K} a\un_{\Gamma} \jump{v_h} \d{s},
  \end{aligned}
\end{displaymath}
for any $v_h \in U_h$.

We begin the error estimation by introducing the following energy
norms for any $v_h \in U_h$:
\begin{align*}
  \eseminorm{v_h}^2 & := \sum_{e \in \MEhoI} \frac{\alpha_0}{h_e}  \|
  \jump{v_h^{\pi_0}} \|_{L^2(e)}^2  +  \sum_{e \in \MEhlI \cup \MEhB}
  \frac{\alpha_1}{h_e} \| \jump{v_h^{\pi_1}} \|_{L^2(e)}^2 + \sum_{K
  \in \MThG} \frac{\aver{\alpha}_w}{h_K} \| \jump{v_h}
  \|_{L^2(\Gamma_K)}^2, \\
  \DGenorm{v_h}^2  &:= \sum_{K \in \MTh} \alpha \| \nabla v_h
  \|_{L^2(K^0 \cup K^1)}^2 + \eseminorm{v_h}^2, \\
  \DGtenorm{v_h}^2 & :=  \DGenorm{v_h}^2 + \sum_{e \in \MEh}
  \frac{h_e}{\alpha} \|
  \aver{ \alpha \nabla v_h} \|_{L^2(e^0 \cup e^1)}^2 + \sum_{K \in
  \MThG} \frac{h_K}{\aver{\alpha}_w}
  \| \aver{\alpha \nabla v_h}_w \|_{L^2(\Gamma_K)}^2, \\
  \DGsnorm{v_h}^2 & := \sum_{K \in \MTho} \alpha_0 \|\nabla
  v_h^{\pi_0} \|_{L^2(K)}^2 +  \sum_{K \in \MThl} \alpha_1 \|
  \nabla v_h^{\pi_1} \|_{L^2(K)}^2 +
  \eseminorm{v_h}^2.
\end{align*}
We show that the above norms are equivalent over the approximation
space $U_h^m$, which is crucial for the stability near the interface.
\begin{lemma}
  There exist constants $C$ such that 
  \begin{equation}
    \DGenorm{v_h} \leq \DGtenorm{v_h} \leq C \DGsnorm{v_h} \leq C
    \Lambda_m \DGenorm{v_h}, \quad \forall v_h \in U_h^m.
    \label{eq_energynorm}
  \end{equation}
  \label{le_energynorm}
\end{lemma}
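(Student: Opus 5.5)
The first inequality $\DGenorm{v_h} \leq \DGtenorm{v_h}$ holds by definition. I would prove the other two separately. For $\DGtenorm{v_h} \leq C\DGsnorm{v_h}$, the volume part of $\DGenorm{v_h}$ is bounded by the volume part of $\DGsnorm{v_h}$, since $K^i \subset K$ and on $K^i$ we have $v_h = v_h^{\pi_i}$, a polynomial on the whole $K \in \MThi$. The seminorm $\eseminorm{\cdot}$ is common to both norms. It remains to treat the two flux terms.

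\emph{Face flux terms.} On $e^i$ with $e \subset \partial K$, $K \in \MThi$, we have $\alpha = \alpha_i$. The discrete trace and inverse estimates for polynomials on $K$ give
\begin{displaymath}
  \frac{h_e}{\alpha_i}\|\alpha_i \nabla v_h^{\pi_i}\|_{L^2(e^i)}^2 \leq C \alpha_i \|\nabla v_h^{\pi_i}\|_{L^2(K)}^2 .
\end{displaymath}

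\emph{Interface flux term.} Observe that $w^0\alpha_0 = w^1\alpha_1 = \aver{\alpha}_w/2$, so $\aver{\alpha\nabla v_h}_w = \frac{\aver{\alpha}_w}{2}(\nabla v_h^{\pi_0} + \nabla v_h^{\pi_1})$ on $\Gamma_K$. By \eqref{eq_H1trace} applied to the polynomials $\nabla v_h^{\pi_i}$ and an inverse estimate,
\begin{displaymath}
  \frac{h_K}{\aver{\alpha}_w}\|\aver{\alpha\nabla v_h}_w\|_{L^2(\Gamma_K)}^2 \leq C\aver{\alpha}_w \sum_{i=0,1}\|\nabla v_h^{\pi_i}\|_{L^2(K)}^2 .
\end{displaymath}
Since $\aver{\alpha}_w \leq 2\alpha_{\min} \leq 2\alpha_i$, this is bounded by the $\DGsnorm{\cdot}$ terms on the cut elements. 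All constants here are independent of the cut and of $\alpha$.

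For $\DGsnorm{v_h} \leq C\Lambda_m\DGenorm{v_h}$, the face jump terms already coincide, and interior elements $K \in \MThic$ contribute the same volume terms to both norms. So the only work is to bound $\alpha_i\|\nabla v_h^{\pi_i}\|_{L^2(K)}^2$ for cut $K \in \MThG$. Fix $i$ and write $v = v_h^{\pi_i} = \mR^{m,i}v_{h,i}$, with $p_K := \mR^{m,i}_K v_{h,i}$.

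\emph{Step 1 (reduce to value differences).} The constrained least squares problem reproduces constants, so $\nabla p_K = \nabla \mR^{m,i}_K(v_{h,i} - v_{h,i}(\bm{x}_K))$. Lemma \ref{le_RKmistability} and an inverse estimate then give
\begin{displaymath}
  \|\nabla p_K\|_{L^2(K)} \leq C\Lambda_m h^{d/2-1}\max_{K'\in\mS_K^i}|v_{h,i}(\bm{x}_{K'}) - v_{h,i}(\bm{x}_K)| .
\end{displaymath}

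\emph{Step 2 (neighbouring interior elements).} For two interior elements $K_1, K_2 \in \MThic$ sharing a face $e$, I would use the standard bound
\begin{displaymath}
  |v_{h,i}(\bm{x}_{K_1}) - v_{h,i}(\bm{x}_{K_2})| \leq C h^{1-d/2}\big(\|\nabla v\|_{L^2(K_1\cup K_2)} + h^{-1/2}\|\jump{v}\|_{L^2(e)}\big).
\end{displaymath}
This uses $v(\bm{x}_{K_j}) = v_{h,i}(\bm{x}_{K_j})$ from the constraint. It involves only interior gradients and face jumps, all of which are present in $\DGenorm{v_h}$.

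\emph{Step 3 (the key use of the constraints; main obstacle).} Values at cut elements must be tied to interior data; otherwise we would need $\nabla v$ on the fictitious part $K \setminus K^i$, which $\DGenorm{\cdot}$ does not control. For a cut $K$ with partner $K' = K^{\sigma_i} \in \MThGic$, Assumption \ref{as_patch} and the constraint set $\mC_{K'}^i = \{\bm{x}_{K'}, \bm{x}_K\}$ give $p_{K'}(\bm{x}_{K'}) = v_{h,i}(\bm{x}_{K'})$ and $p_{K'}(\bm{x}_K) = v_{h,i}(\bm{x}_K)$. Hence
\begin{displaymath}
  |v_{h,i}(\bm{x}_K) - v_{h,i}(\bm{x}_{K'})| \leq C h\|\nabla p_{K'}\|_{L^\infty(\mD_{K'}^i)} \leq C\Theta\, h^{1-d/2}\|\nabla p_{K'}\|_{L^2(K')} ,
\end{displaymath}
using the bounded ratio $\Theta_{m,K',i} \leq C\Lambda_m$ (norm equivalence of polynomials on the patch) and an inverse estimate. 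The right-hand side is an interior gradient. Combining this with Step 2, any difference of values in $\mS_K^i$ can be written through a chain of bounded length: from each cut element go to its $\sigma_i$-partner, then walk through interior elements of $\MThic$ using Step 2. This requires that the interior elements in a neighbourhood of size $O(s_0+t_m)$ are face-connected, which I would take as part of the mesh/geometry setting for fine $h$.

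\emph{Step 4 (summation).} Square the resulting bounds, multiply by $\alpha_i$, and sum over $K \in \MThG$. Since $\sigma_i$ is injective and chains have bounded length, each interior term is used a bounded number of times, which yields $\alpha_i\sum_{K\in\MThG}\|\nabla v_h^{\pi_i}\|_{L^2(K)}^2 \leq C\Lambda_m^2\DGenorm{v_h}^2$. The delicate parts are the chain/connectivity argument and the $\Lambda_m$ bookkeeping in Step 3.
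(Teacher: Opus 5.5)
Your proposal is correct and follows essentially the same route as the paper: you reduce $\|\nabla v_h^{\pi_i}\|_{L^2(K)}$ on cut elements to differences of nodal values via constant reproduction and the stability of $\mR_K^{m,i}$, tie each cut-element value to its $\sigma_i$-partner through the two-point constraint (via Assumption~\ref{as_patch}), and chain through face-connected interior elements using interior gradients and face jumps (the paper makes the same implicit connectivity assumption); your argument for $\DGtenorm{v_h}\leq C\DGsnorm{v_h}$ also correctly fills in what the paper calls straightforward. The only thing to repair is Step~3: $\Theta_{m,K',i}$ is neither needed nor the relevant quantity there, because $h\|\nabla p_{K'}\|_{L^\infty(B(\bm{x}_{K'}, C s_0 h))}\leq C h^{1-d/2}\|\nabla p_{K'}\|_{L^2(K')}$ is plain polynomial norm equivalence with $C$ depending only on $m$, $s_0$ and $C_\nu$ (exactly as the paper passes from $K'$ to $(K')^{\sigma_i}$), whereas keeping a factor $\Theta\leq C\Lambda_m$ would only give $\DGsnorm{v_h}\leq C\Lambda_m^2\DGenorm{v_h}$ rather than the claimed $C\Lambda_m\DGenorm{v_h}$.
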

\begin{proof}
  We mainly prove the last estimate $\DGsnorm{v_h} \leq C \Lambda_m
  \DGenorm{v_h}$ while others are straightforward by trace estimates.
  From the definition, it suffices to prove that for both $i = 0, 1$, 
  there holds $\sum_{K \in {\MThG}} \alpha_i \| \nabla v_h^{\pi_i}
  \|_{L^2(K)}^2 \leq C \Lambda_m^2 \DGenorm{v_h}^2$.
  Let $w_{h, i} \in V_{h, i}$ such that $\mR^{m, i} w_{h,i} =
  v_h^{\pi_i}$.
  For any $K \in \MThi$, we let $v_{K}^{\pi_i} := \mR_K^{m, i} w_{h,
  i}$.
  For any $K \in \MThG$, we split the patch $\mS_K^i$ into $\mS_K^i =
  \mS_K^{i, \Gamma} + \mS_K^{i, \circ}$, where $\mS_K^{i, \Gamma}
  \subset \MThG$ and $\mS_K^{i, \circ} \subset \MThic$ consist of cut
  elements and interior elements, respectively.
  We define a set $\tilde{\mS}_K^{i} := \mS_K^{i, \circ} \cup \{
  (K')^{\sigma_i}: \ K' \in \mS_K^{i,\Gamma} \}$ formed by interior
  elements. 
  Since $w_{h, i}$ is piecewise constant on $\tilde{\mS}_K^i$, 
  we let $w_{\min} := \min_{K' \in \tilde{\mS}_K^i} w_{h, i} |_{K'}$
  and  $w_{\max} := \max_{K' \in \tilde{\mS}_K^i} w_{h, i} |_{K'}$. 
  We derive that 
  \begin{align}
    \| \nabla v_h^{\pi_i}\|_{L^2(K)}^2 &= \| \nabla(\mR^{m, i} w_{h, i}
    - w_{\min} )\|_{L^2(K)}^2  \leq C h_K^{-2} \| \mR^{m, i} (w_{h, i}
    - w_{\min}) \|_{L^2(K)}^2  \nonumber \\
    & \leq C \Lambda_{m, K, i}^2 h_K^{d-2} \sum_{K' \in
    \mS_K^i} ((w_{h, i} - w_{\min})(\bm{x}_{K'}))^2
    \label{eq_nablavhpii} \\
    & = C \Lambda_{m,
    K, i}^2 h_K^{d-2}
    \big(\sum_{K' \in \mS_K^{i, \Gamma}} ((w_{h, i} -
    w_{\min})(\bm{x}_{K'}))^2  + \# \mS_{K}^{i, \circ} (w_{\max} -
    w_{\min})^2\big).  \nonumber
  \end{align}
  For any $K' \in \mS_K^{i, \Gamma}$, 
  there holds $(w_{h,i} - w_{\min})(\bm{x}_{K'}) =
  (v_{(K')^{\sigma_i}}^{\pi_i} - w_{\min})(\bm{x}_{K'})$, and 
  \begin{align*}
    \|(v_{(K')^{\sigma_i}}^{\pi_i} -
    w_{\min})&\|_{L^{\infty}(K')} \leq C
    h_K^{-d/2} \| v_{(K')^{\sigma_i}}^{\pi_i} - w_{\min}
    \|_{L^2(K')} \leq C h_K^{-d/2}  \| v_{(K')^{\sigma_i}}^{\pi_i} -
    w_{\min} \|_{L^2( (K')^{\sigma_i})} \\
    & \leq C h_K^{-d/2} (\| v_{(K')^{\sigma_i}}^{\pi_i} - w_{h,
    i}(\bm{x}_{ (K')^{\sigma_i}}) \|_{L^2( (K')^{\sigma_i})} + \|
    w_{h, i}(\bm{x}_{ (K')^{\sigma_i}}) - w_{\min}
    \|_{L^2({(K')^{\sigma_i}})}) \\
    & \leq C h_K^{-d/2} \| v_{(K')^{\sigma_i}}^{\pi_i} -
    v_{(K')^{\sigma_i}}^{\pi_i}(\bm{x}_{ (K')^{\sigma_i}})
    \|_{L^2({(K')^{\sigma_i}})} + C |w_{\max} - w_{\min}|\\
    & \leq C h_K^{1-d/2} \| \nabla v_{(K')^{\sigma_i}}^{\pi_i} \|_{L^2(
    (K')^{\sigma_i})} + C |w_{\max} - w_{\min}|.
  \end{align*}
  Then, we bound the term $|w_{\max} - w_{\min}|$.
  Since $K_{\max}, K_{\min} \in \MThic$, there exist a sequence of
  interior elements $K_1, K_2, \ldots, K_M(M = O(t))$, such that $K_1
  = K_{\max}, K_M = K_{\min}$, $K_j$ and $K_{j+1}$ sharing a common
  face $e_j$. 
  It can be seen that
  \begin{align}
    |w_{\max} - w_{\min}|^2 = |w_{K_1} - w_{K_M}|^2 \leq C t \sum_{j =
    1}^{M-1} |w_{K_j} - w_{K_{j+1}}|^2.
    \label{eq_wmaxdiffwmin}
  \end{align}
  Let $\bm{x}_{e_j}$ be any point in $e_j$. From the constraint of
  \eqref{eq_leastsquares}, we find that
  \begin{align}
    |w_{K_j} - &w_{K_{j+1}}|^2  = | v_{K_j}^{\pi_i}(\bm{x}_{K_j}) -
    v_{K_{j+1}}^{\pi_i}(\bm{x}_{K_{j+1}})|^2 \nonumber \\
    & \leq C (|  v_{K_j}^{\pi_i}(\bm{x}_{K_j}) -
    v_{K_j}^{\pi_i}(\bm{x}_{e_j}) |^2 + |
    v_{K_{j+1}}^{\pi_i}(\bm{x}_{K_{j+1}}) -
    v_{K_{j+1}}^{\pi_i}(\bm{x}_{e_j}) |^2 + |
    v_{K_j}^{\pi_i}(\bm{x}_{e_j}) -  v_{K_{j+1}}^{\pi_i}(\bm{x}_{e_j})
    |^2) \label{eq_wjumpface} \\
    & \leq C (h_{K_j}^{2-d} \| \nabla v_{K_j}^{\pi_i}\|_{L^2(K_j)}^2 +
    h_{K_{j+1}}^{2-d} \| \nabla v_{K_{j+1}}^{\pi_i}\|_{L^2(K_{j+1})}^2 +
    h_{e_{j}}^{2-d} \| \jump{v_h^{\pi_i}} \|_{L^2(e_j)}^2). \nonumber
  \end{align}
  Collecting all above estimates and summing over all cut elements
  leads to $\sum_{K \in \MThG} \alpha_i \| \nabla v_h^{\pi_i}
  \|_{L^2(K)}^2 \leq C \Lambda_m^2 \DGenorm{v_h}^2$, which completes
  the proof.
\end{proof}
From Lemma \ref{le_energynorm}, the bilinear form is bounded and
coercive under the energy norm $\DGtenorm{\cdot}$.
\begin{lemma}
  Let $a_h(\cdot, \cdot)$ be defined with a sufficiently large $\mu$,
  then 
  \begin{align}
    a_h(v_h, w_h) &\leq C \DGtenorm{v_h} \DGtenorm{ w_h}, \quad \forall
    v_h, w_h \in U_h, 
    \label{eq_ahbound} \\
    a_h(w_h, w_h) & \geq C \Lambda_m^{-2} \DGtenorm{w_h}^2, \quad \forall
    w_h \in U_h^m. \label{eq_ahcoer}
  \end{align}
  \label{le_ahbc}
\end{lemma}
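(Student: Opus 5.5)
Boundedness \eqref{eq_ahbound} will follow term by term from the Cauchy--Schwarz inequality, with weights chosen so that every factor is one of the pieces of $\DGtenorm{\cdot}$. For the volume term this is immediate. For the face consistency terms we write
\begin{displaymath}
\int_{e^0\cup e^1}\aver{\alpha\nabla v_h}\cdot\jump{w_h}\,\mathrm{d}s \le \Big(\tfrac{h_e}{\alpha}\|\aver{\alpha\nabla v_h}\|^2_{L^2(e^0\cup e^1)}\Big)^{1/2}\Big(\tfrac{\alpha}{h_e}\|\jump{w_h}\|^2_{L^2(e^0\cup e^1)}\Big)^{1/2}.
\end{displaymath}
Here we use that on $e^i$ the jump of $w_h$ coincides with $\jump{w_h^{\pi_i}}$ restricted to $e^i$, and that $\alpha=\alpha_i$ there, so the second factor is controlled by $\eseminorm{w_h}$. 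The interface consistency terms are handled the same way with the weights $h_K/\aver{\alpha}_w$ and $\aver{\alpha}_w/h_K$. The penalty terms are bounded directly by $\eseminorm{v_h}\eseminorm{w_h}$. Summing all contributions gives \eqref{eq_ahbound} with $C$ independent of $\alpha$ and of the cut position.

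For coercivity \eqref{eq_ahcoer} take $v_h=w_h\in U_h^m$. Then
\begin{displaymath}
a_h(w_h,w_h)=\sum_{K}\alpha\|\nabla w_h\|^2_{L^2(K^0\cup K^1)}+\mu\,\eseminorm{w_h}^2-2T,
\end{displaymath}
where $T$ collects the face and interface consistency terms. By Cauchy--Schwarz and Young's inequality,
\begin{displaymath}
2|T|\le \epsilon\Big(\sum_{e}\tfrac{h_e}{\alpha}\|\aver{\alpha\nabla w_h}\|^2_{L^2(e^0\cup e^1)}+\sum_{K\in\MThG}\tfrac{h_K}{\aver{\alpha}_w}\|\aver{\alpha\nabla w_h}_w\|^2_{L^2(\Gamma_K)}\Big)+\epsilon^{-1}\eseminorm{w_h}^2.
\end{displaymath}
The key step is to bound the flux terms in brackets by $\DGsnorm{w_h}^2$. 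We do not bound them by the volume part of $\DGenorm{w_h}$ alone, because that part only sees the possibly tiny cut pieces $K^i$.

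This flux bound is the main obstacle. On $e^i$ the flux is $\alpha_i\nabla w_h^{\pi_i}$, a polynomial on each full element $K$ containing $e$. The standard polynomial trace and inverse estimates on the whole simplex $K$ give
\begin{displaymath}
h_e\alpha_i\|\nabla w_h^{\pi_i}\|^2_{L^2(e^i)}\le C\alpha_i\|\nabla w_h^{\pi_i}\|^2_{L^2(K)},
\end{displaymath}
since $e^i\subset e$. On $\Gamma_K$, \eqref{eq_H1trace} applied to a polynomial together with an inverse estimate gives $h_K\|\nabla w_h^{\pi_i}\|^2_{L^2(\Gamma_K)}\le C\|\nabla w_h^{\pi_i}\|^2_{L^2(K)}$. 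The harmonic weights then give
\begin{displaymath}
\frac{(w^i\alpha_i)^2}{\aver{\alpha}_w}=\frac{\alpha_0\alpha_1}{2(\alpha_0+\alpha_1)}\le \frac{\alpha_i}{2},
\end{displaymath}
so the interface flux is bounded by $C\sum_i\alpha_i\|\nabla w_h^{\pi_i}\|^2_{L^2(K)}$ uniformly in the contrast. Hence the whole bracket is at most $C_1\DGsnorm{w_h}^2$. This is exactly the middle inequality of Lemma \ref{le_energynorm}, and we may simply invoke it.

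Now apply the last inequality of Lemma \ref{le_energynorm}, $\DGsnorm{w_h}^2\le C\Lambda_m^2\DGenorm{w_h}^2$. This gives
\begin{displaymath}
a_h(w_h,w_h)\ge (1-\epsilon C\Lambda_m^2)\sum_K\alpha\|\nabla w_h\|^2_{L^2(K^0\cup K^1)}+(\mu-\epsilon^{-1}-\epsilon C\Lambda_m^2)\eseminorm{w_h}^2.
\end{displaymath}
Choose $\epsilon=1/(2C\Lambda_m^2)$ and $\mu\ge 1+2C\Lambda_m^2$; the latter is "sufficiently large", depending only on $m$ and $C_\nu$ through the uniform bound on $\Lambda_m$. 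Then $a_h(w_h,w_h)\ge \tfrac12\DGenorm{w_h}^2$.

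Finally, Lemma \ref{le_energynorm} gives $\DGtenorm{w_h}\le C\DGsnorm{w_h}\le C\Lambda_m\DGenorm{w_h}$. Therefore $a_h(w_h,w_h)\ge C\Lambda_m^{-2}\DGtenorm{w_h}^2$, which is \eqref{eq_ahcoer}.
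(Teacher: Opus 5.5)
Your proposal is correct and follows essentially the same route as the paper: Cauchy--Schwarz for boundedness, and for coercivity the standard interior-penalty Young-inequality estimate with parameter $\epsilon \sim \Lambda_m^{-2}$. You then combine it with Lemma \ref{le_energynorm} in the form $\DGtenorm{\cdot}\le C\DGsnorm{\cdot}\le C\Lambda_m\DGenorm{\cdot}$, which forces $\mu\gtrsim\Lambda_m^2$ and gives the coercivity constant $\Lambda_m^{-2}$; you also supply more detail than the paper on why the flux terms must be controlled on full elements and why the harmonic weights make this bound independent of the contrast.
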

\begin{proof}
  The boundedness \eqref{eq_ahbound} is quite standard by the
  Cauchy-Schwarz inequality. 
  The coercivity can be obtained following the canonical procedure in
  the interior penalty discontinuous Galerkin framework, see
  \cite{Li2020interface, Huang2017unfitted}, i.e., there holds
  \begin{displaymath}
    a_h(w_h, w_h) \geq \DGenorm{w_h}^2 + (\mu - C_0 \tau^{-1})
    \eseminorm{w_h}^2 - C_1 \tau \DGtenorm{w_h}^2, \quad \forall \tau
    > 0.
  \end{displaymath}
  Together with Lemma \ref{le_energynorm}, the estimate
  \eqref{eq_ahcoer} is reached by selecting a sufficiently large
  penalty, which completes the proof.
\end{proof}
\begin{lemma}
  Let $u \in H^2(\Omega_0 \cup \Omega_1)$ be the exact solution to
  \eqref{eq_problem}, and
  let $u_h \in U_h^m$ be the numerical solution to
  \eqref{eq_dvarproblem}, then
  \begin{equation}
    a_h(u - u_h, v_h) = 0, \quad \forall v_h \in U_h^m.
    \label{eq_ahorth}
  \end{equation}
  \label{le_ahorth}
\end{lemma}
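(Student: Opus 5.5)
Galerkin orthogonality \eqref{eq_ahorth} is equivalent to the consistency statement $a_h(u, v_h) = l_h(v_h)$ for all $v_h \in U_h^m$. Once this holds, subtracting \eqref{eq_dvarproblem} gives the claim. The plan is therefore to verify consistency directly. We fix $v_h \in U_h^m$, integrate by parts elementwise on each subdomain piece $K^0, K^1$, and use the regularity $u \in H^2(\Omega_0 \cup \Omega_1)$ together with the data conditions in \eqref{eq_problem} to match the bilinear form term by term with $l_h$.

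\emph{Step 1: volume terms.} For each $K \in \MTh$ and $i = 0,1$, Green's formula on $K^i$ gives
\[
\int_{K^i} \alpha \nabla u \cdot \nabla v_h \dx{x} = \int_{K^i} f v_h \dx{x} + \int_{\partial K^i} \alpha \nabla u \cdot \un\, v_h \dx{s},
\]
using $-\nabla\cdot(\alpha\nabla u) = f$. The boundary $\partial K^i$ splits into face pieces $e^i$ and the interface piece $\Gamma_K$.

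\emph{Step 2: face terms.} We sum the face contributions over $\MEh$, using the standard DG identity $\sum_K \int_{\partial K} q\cdot\un\, v = \sum_e \int_e (\aver{q}\cdot\jump{v} + \jump{q}\aver{v})$. Since $u^{\pi_i} = u$ on $\Omega_i$ and $\alpha \nabla u \in H^1(\Omega_i)$, we have $\jump{\alpha\nabla u} = 0$ on interior face pieces $e^i$. Hence only $\sum_e \int_{e^0\cup e^1} \aver{\alpha\nabla u}\cdot\jump{v_h}$ survives, and this cancels the first consistency term in $a_h$.

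For the penalty and symmetric terms on faces, one subtlety must be checked. The face penalties in $a_h$ involve $\jump{u^{\pi_i}}$ on whole faces of $\MThi$, including faces of cut elements lying partly outside $\Omega_i$. Here $u^{\pi_i}$ is the Sobolev extension $E^i u_i \in H^2(\Omega)$, which is continuous, so $\jump{u^{\pi_i}} = 0$ on interior faces. On the remaining faces:
\begin{itemize}
\item for $e \in \MEhB$ we have $\jump{u} = g\un$, which reproduces the terms $-\int_e g\aver{\alpha\nabla v_h}$ and $\int_e \mu\alpha h_e^{-1} g v_h$ in $l_h$;
\item on interior face pieces the term $\aver{\alpha\nabla v_h}\cdot\jump{u}$ vanishes.
\end{itemize}

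\emph{Step 3: interface terms.} On $\Gamma_K$ we use the weighted identity
\[
q^0\cdot\un_\Gamma\, v^0 - q^1\cdot\un_\Gamma\, v^1 = \aver{q}_w\cdot\jump{v} + \jump{q}\aver{v}^w ,
\]
which holds because $w^0 + w^1 = 1$ (a direct algebraic check). The sign convention is that $\un_\Gamma$ is outward for $K^0$ and inward for $K^1$. With $q = \alpha\nabla u$ and $\jump{\alpha\nabla u} = b$, the interface contribution from Step 1 equals
\[
\int_{\Gamma_K} \aver{\alpha\nabla u}_w\cdot\jump{v_h} + b\aver{v_h}^w .
\]
The first part cancels the corresponding term in $a_h$, and the second part is the $b$-term of $l_h$. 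Finally, substituting $\jump{u} = a\un_\Gamma$ into the symmetric term $-\aver{\alpha\nabla v_h}_w\cdot\jump{u}$ and the penalty term $\mu\aver{\alpha}_w h_K^{-1}\jump{u}\cdot\jump{v_h}$ yields exactly the last two terms of $l_h$.

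The main obstacle is bookkeeping rather than analysis. One point is the sign convention of $\un_\Gamma$ in the weighted identity. The other is confirming that the ghost-region face penalties on $\mc{E}_{h,i}^I$ vanish for $u$, which relies on $u^{\pi_i}$ being the $H^2$ extension rather than $u$ itself.
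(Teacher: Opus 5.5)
Your proposal is correct and is essentially the paper's argument written out in full: the paper's one-line proof invokes exactly the two facts you use, namely that $\jump{u^{\pi_i}}$ vanishes on interior faces of $\MThi$ because $u^{\pi_i}$ is the $H^2$ Sobolev extension, and the weighted identity $\jump{ab} = \aver{a}_w\jump{b} + \jump{a}\aver{b}^w$ on $\Gamma_K$ after elementwise integration by parts. The only bookkeeping caveat is that the term $\jump{q}\aver{v}$ in your Step 2 identity should be summed over $\MEhI$ only, since with the paper's boundary conventions it would otherwise double-count $\int_e q\cdot\un\, v$ on $e \in \MEhB$. Also, your separate treatment of $\jump{u} = g\un$ on $\MEhB$ is more precise than the paper's claim that $\jump{u^{\pi_i}}$ vanishes on all of $\MEhi$.
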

\begin{proof}
  The orthogonality \eqref{eq_ahorth} follows from the fact that
  $\jump{u^{\pi_i}}|_{e} = 0$ for any $e \in \MEhi$, combined with the
  identity $\jump{ab} = \aver{a}_w \jump{b} + \jump{a} \aver{b}^w$.
\end{proof}
\begin{theorem}
  Let $a_h(\cdot, \cdot)$ be defined with a sufficiently large $\mu$,
  and let $u \in H^t(\Omega_0 \cup \Omega_1)(t \geq 2)$ be the exact
  solution to \eqref{eq_problem}, and let $u_h \in U_h^m$ be the
  numerical solution to \eqref{eq_dvarproblem}, then
  \begin{align}
    \DGenorm{u - u_h} \leq C \Lambda_m^3 h^s \| \alpha^{\frac12} u
    \|_{H^t(\Omega_0 \cup \Omega_1)},
    \label{eq_DGerror} \\
    \| u - u_h \|_{L^2(\Omega)} \leq C \Lambda_m^4
    \alpha_{\min}^{-\frac12} h^{s+1} \|
    \alpha^{\frac12} u
    \|_{H^t(\Omega_0 \cup \Omega_1)}, 
    \label{eq_L2error}
  \end{align}
  where $s = \min(m, t - 1)$.
  \label{th_error}
\end{theorem}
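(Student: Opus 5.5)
The energy estimate \eqref{eq_DGerror} follows the standard Céa-type argument. The $L^2$ estimate \eqref{eq_L2error} then follows from a weighted Aubin--Nitsche duality argument.

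\textbf{Energy estimate.} Take the interpolant $u_I := \mR^m u := \mR^{m,0} u^{\pi_0}\cdot\chi_0 + \mR^{m,1}u^{\pi_1}\cdot\chi_1 \in U_h^m$, where $u^{\pi_i}$ is the Sobolev extension \eqref{eq_Sobolev} of $u|_{\Omega_i}$. This is well defined because the extensions are continuous for $t\ge 2$. Split $u-u_h = (u-u_I) + (u_I-u_h)$. Apply the coercivity \eqref{eq_ahcoer} to $u_I-u_h\in U_h^m$, then the Galerkin orthogonality \eqref{eq_ahorth}, then the boundedness \eqref{eq_ahbound}:
\begin{displaymath}
  C\Lambda_m^{-2}\DGtenorm{u_I-u_h}^2 \le a_h(u_I-u_h,u_I-u_h) = a_h(u_I-u,u_I-u_h) \le C\DGtenorm{u-u_I}\DGtenorm{u_I-u_h}.
\end{displaymath}
This gives $\DGenorm{u-u_h}\le \DGtenorm{u-u_h} \le C\Lambda_m^2\DGtenorm{u-u_I}$.

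It remains to show $\DGtenorm{u-u_I}\le C\Lambda_m h^s\|\alpha^{1/2}u\|_{H^t(\Omega_0\cup\Omega_1)}$. Each term of $\DGtenorm{\cdot}$ is localized to an element $K\in\MThi$ and involves only $u^{\pi_i}-\mR_K^{m,i}u^{\pi_i}$:
\begin{itemize}
\item volume terms on $K^i\subset K$ are bounded directly;
\item face terms on $e$ and $e^i$ use the standard trace inequality on $K$;
\item interface terms on $\Gamma_K$ use \eqref{eq_H1trace} on the whole element $K$.
\end{itemize}
In each case we reduce to $h_K^{-1}\|\cdot\|_{L^2(K)}^2 + h_K\|\nabla\cdot\|_{L^2(K)}^2 + h_K^3\|\nabla^2\cdot\|_{L^2(K)}^2$, and Theorem~\ref{th_RKmiapp} with $s+1$ in place of $s$ bounds each piece by $C\Lambda_m^2h^{2s}\|u^{\pi_i}\|_{H^{s+1}(\mD_K^i)}^2$.

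For the coefficient weights, $\aver{\alpha}_w\le 2\alpha_{\min}\le 2\alpha_i$ and $\aver{\alpha}_w^{-1}\aver{\alpha\nabla v}_w^2 \le C\sum_i\alpha_i|\nabla v^i|^2$ because $w^i\alpha_i\le\aver{\alpha}_w$. Hence every weight can be replaced by $\alpha_i$ on the $i$-th piece. Each element lies in at most $\mN_m$ patches, so summing over elements only costs a constant. Finally, the extension bound \eqref{eq_Sobolev} gives $\|u^{\pi_i}\|_{H^{s+1}(\Omega)}\le C\|u\|_{H^{s+1}(\Omega_i)}$, and combining yields \eqref{eq_DGerror}.

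\textbf{$L^2$ estimate.} Let $z$ solve the dual interface problem with homogeneous jump and boundary data and right-hand side $u-u_h$. The bilinear form is symmetric, so $z$ satisfies $a_h(v,z) = (v,u-u_h)$ for all $v\in U_h$, by the same consistency argument as Lemma~\ref{le_ahorth}. Choosing $v=u-u_h$, subtracting $a_h(u-u_h,\mR^m z)=0$ (by \eqref{eq_ahorth}) and applying \eqref{eq_ahbound} gives
\begin{displaymath}
  \|u-u_h\|_{L^2}^2 = a_h(u-u_h,z-\mR^m z) \le C\DGtenorm{u-u_h}\DGtenorm{z-\mR^m z}.
\end{displaymath}
The interpolation estimate with $m=1$ regularity gives $\DGtenorm{z-\mR^m z}\le C\Lambda_m h\|\alpha^{1/2}z\|_{H^2(\Omega_0\cup\Omega_1)}$. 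We also need $\DGtenorm{u-u_h}\le C\Lambda_m^3h^s\|\alpha^{1/2}u\|_{H^t}$, which holds because the Céa step above already controls the triple norm, not just $\DGenorm{\cdot}$.

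\textbf{Main obstacle.} The key step is the $\alpha$-robust regularity bound
\begin{displaymath}
  \|\alpha^{1/2}z\|_{H^2(\Omega_0\cup\Omega_1)}\le C\alpha_{\min}^{-1/2}\|u-u_h\|_{L^2},
\end{displaymath}
i.e.\ $\|\alpha z\|_{H^2}\le C\|u-u_h\|_{L^2}$ uniformly in the contrast, which I take from \cite{Huang2007uniform}. With it, the product above carries $\Lambda_m^4$ and $\alpha_{\min}^{-1/2}$, matching \eqref{eq_L2error}. If this uniform regularity were unavailable, the dependence on $\alpha$ would degrade, so this is where I expect the real difficulty.
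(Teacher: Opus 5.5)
Your proposal is correct and follows essentially the same route as the paper. Both use a C\'ea argument combining the $\Lambda_m^{-2}$ coercivity with a $C\Lambda_m h^s$ bound on $\DGtenorm{u-\mR^m u}$. Both then use duality with $\mR^m$ of the dual solution as interpolant, the triple-norm error bound from the C\'ea step, and the contrast-uniform regularity $\|\alpha\psi\|_{H^2}\le C\|u-u_h\|_{L^2(\Omega)}$, which the paper also simply assumes.

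The only slip is bookkeeping. Each term of $\DGtenorm{u-\mR^m u}^2$ reduces locally to $h_K^{-2}\|\cdot\|_{L^2(K)}^2+\|\nabla\cdot\|_{L^2(K)}^2+h_K^2\|\nabla^2\cdot\|_{L^2(K)}^2$, not your expression, which is off by a factor $h_K$. It is this scaling that gives exactly the $h^{2s}$ per piece you state.
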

\begin{proof}
  From the approximation property \eqref{eq_RKmiapp} and the trace
  estimate \eqref{eq_H1trace}, we have that
  \begin{displaymath}
    \inf_{v_h \in U_h^m} \DGtenorm{u - v_h} \leq C \Lambda_m h^s \|
    \alpha^{\frac12} u \|_{H^t(\Omega_0 \cup \Omega_1)}.
  \end{displaymath}
  From Lemma \ref{le_ahbc} and Lemma \ref{le_ahorth}, we conclude that
  for any $v_h \in U_h^m$, there holds
  \begin{align*}
    C \Lambda_m^{-2} \DGtenorm{v_h - u_h}^2 \leq a_{h}(v_h - u_h, v_h
    - u_h) = a_{h}(u - v_h, v_h - u_h)  \leq C \DGtenorm{u - v_h}
    \DGtenorm{v_h - u_h}, 
  \end{align*}
  and combining the triangle inequality yields $\DGtenorm{u - u_h} \leq
  C \Lambda_m^2 \inf_{v_h \in U_h^m} \DGtenorm{u - v_h}$, which leads
  to the estimate \eqref{eq_DGerror}.

  The $L^2$ estimate follows from the duality argument. 
  Let $\psi \in H^2(\Omega) \cap H_0^1(\Omega)$ be the solution of the
  problem 
  \begin{align*}
    - \nabla \cdot (\alpha \nabla \psi)  = u - u_h, \quad \text{in }
    \Omega, \quad \psi = 0, \quad \text{on } \partial \Omega,
  \end{align*}
  with $\| \alpha \psi \|_{H^2(\Omega)} \leq C \| u - u_h \|_{L^2(\Omega)}$.
  Let $\psi_I := \mR^m \psi$, and we derive that
  \begin{align*}
    \| u - u_h \|_{L^2(\Omega)}^2 & = a_h(\psi, u - u_h) = a_h(\psi -
    \psi_I, u - u_h) \leq C \DGtenorm{\psi - \psi_I} \DGtenorm{u -
    u_h} \\
    & \leq Ch^{s+1} \Lambda_m^4 \|\alpha^{\frac12} \psi\|_{H^2(\Omega)} \|
    \alpha^{\frac12} u
    \|_{H^t(\Omega_0 \cup \Omega_1)},
  \end{align*}
  which completes the proof.
\end{proof}


\section{Preconditioning}
\label{sec_precondition}
In this section, we present the preconditioning method to the final
linear system $A_m \bmr{x} = \bmr{b}$ arising from the discrete
variational form \eqref{eq_dvarproblem}. 
We first estimate the condition number to the matrix $A_m$, which
requires the relationship between the energy norm and the $L^2$ norm. 
\begin{lemma}
  For $i = 0, 1$, there holds 
  \begin{equation}
    \begin{aligned}
      \| \alpha_0^{\frac12} v_h^{\pi_0} \|_{L^2(\Oho)} + \|
      \alpha_1^{\frac12} v_h^{\pi_1} \|_{L^2(\Ohl)}  &\leq C
      \DGsnorm{v_h} \\
      & \leq  C h^{-1} (  \| \alpha_0^{\frac12} v_h^{\pi_0}
      \|_{L^2(\Oho)} + \|\alpha_1^{\frac12} v_h^{\pi_1} \|_{L^2(\Ohl)}
      ), \quad \forall v_h \in U_h^m.
    \end{aligned}
    \label{eq_L2DGnorm}
  \end{equation}
  \label{le_L2DGnorm}
\end{lemma}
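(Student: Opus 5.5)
The inequality splits into two independent pieces: a discrete Poincaré--Friedrichs inequality (lower bound) and an inverse estimate (upper bound). Both act on the two components $v_h^{\pi_0}\in U_{h,0}^m$ and $v_h^{\pi_1}\in U_{h,1}^m$, which are broken polynomials on the full active meshes $\MTho$, $\MThl$. The norm $\DGsnorm{\cdot}$ is convenient because it already contains the full-element gradients $\|\nabla v_h^{\pi_i}\|_{L^2(K)}$ on every $K\in\MThi$, including cut elements. So I will work directly on the polygonal domains $\Oho$, $\Ohl$ and avoid cut-cell integrals, except for the interface jump terms.

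\textbf{Upper bound.} I treat $\DGsnorm{v_h}^2$ term by term.
\begin{enumerate}
\item The volume terms use the standard inverse inequality on each $K\in\MThi$, valid by quasi-uniformity: $\alpha_i\|\nabla v_h^{\pi_i}\|_{L^2(K)}^2\le C h^{-2}\alpha_i\|v_h^{\pi_i}\|_{L^2(K)}^2$.
\item For the face terms on $e\in\MEhiI$ (and $\MEhB$ for $i=1$), I use the discrete trace inequality $\|v\|_{L^2(e)}^2\le Ch_K^{-1}\|v\|_{L^2(K)}^2$ for polynomials. This gives $\frac{\alpha_i}{h_e}\|\jump{v_h^{\pi_i}}\|_{L^2(e)}^2\le Ch^{-2}\alpha_i\sum_{K\supset e}\|v_h^{\pi_i}\|_{L^2(K)}^2$.
\item For the interface term, $\aver{\alpha}_w\le 2\alpha_{\min}\le 2\alpha_i$ gives $\frac{\aver{\alpha}_w}{h_K}\|\jump{v_h}\|_{L^2(\Gamma_K)}^2\le \frac{C}{h_K}\sum_i\alpha_i\|v_h^{\pi_i}\|_{L^2(\Gamma_K)}^2$. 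Here $v_h^{\pi_i}|_K$ is the full polynomial on $K$. Applying \eqref{eq_H1trace} followed by the inverse inequality bounds this by $Ch^{-2}\sum_i\alpha_i\|v_h^{\pi_i}\|_{L^2(K)}^2$.
\end{enumerate}
Each element is counted a bounded number of times, so summing yields the second inequality with constant $Ch^{-1}$, independent of the cut position and of $\alpha$.

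\textbf{Lower bound.} Here I need $\alpha_i\|v_h^{\pi_i}\|_{L^2(\Ohi)}^2\le C\DGsnorm{v_h}^2$ for each $i$. The two subdomains play different roles.
\begin{enumerate}
\item For $i=1$, the mesh $\MThl$ covers the outer region and its boundary faces $\MEhB$ carry the penalty term. The broken Poincaré--Friedrichs inequality for piecewise $H^1$ functions (Brenner's inequality, applied on the polygonal domain $\Ohl$ with the quasi-uniform mesh $\MThl$) gives
$\|v\|_{L^2(\Ohl)}^2\le C\big(\sum_{K}\|\nabla v\|_{L^2(K)}^2+\sum_{e\in\MEhlI\cup\MEhB}h_e^{-1}\|\jump{v}\|_{L^2(e)}^2\big)$.
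Multiplying by $\alpha_1$ gives the bound by $\DGsnorm{v_h}^2$.
\item For $i=0$, $\Oho$ touches no part of $\partial\Omega$, so control must come through the interface. I split $v_h^{\pi_0}=(v_h^{\pi_0}-v_h^{\pi_1})+v_h^{\pi_1}$ on the strip $\Gamma$. Then I use a broken Poincaré inequality on $\Oho$ with boundary data on $\Gamma$, namely $\|v\|_{L^2(\Oho)}^2\le C(|v|_{1,h}^2+\|v\|_{L^2(\Gamma)}^2)$, where $|v|_{1,h}$ includes the interior face jumps, and bound
$\alpha_0\|v_h^{\pi_0}\|_{L^2(\Gamma)}^2\le 2\alpha_0\|\jump{v_h}\|_{L^2(\Gamma)}^2+2\alpha_0\|v_h^{\pi_1}\|_{L^2(\Gamma)}^2$.
\end{enumerate}

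The main obstacle is $\alpha$-robustness in this last split. The jump is only weighted by $\aver{\alpha}_w\approx\alpha_{\min}$, and $\alpha_0\|v_h^{\pi_1}\|_{L^2(\Gamma)}^2$ would need $\alpha_0\lesssim\alpha_1$. If $\alpha_0\le\alpha_1$, the jump weight is $\simeq\alpha_0$, and the trace of $v_h^{\pi_1}$ is controlled through \eqref{eq_H1trace} and the already-proved $i=1$ bound, with $h\le C$. If $\alpha_0>\alpha_1$, I would instead argue on the whole domain. Define a global broken function $\tilde v$ that equals $v_h^{\pi_0}$ on elements of $\MTho$ and $v_h^{\pi_1}$ elsewhere, and apply the broken Poincaré inequality on $\Omega$ with the boundary penalty. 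The extra jumps across $\partial\Oho$ are handled by comparing $v_h^{\pi_0}$ and $v_h^{\pi_1}$ on cut elements via $\Gamma_K$, using polynomial norm equivalence, which needs some care with small cuts. Then $\alpha_0\ge\alpha_1$ must be absorbed. If full robustness proves elusive, I expect the intended reading is that the generic $C$ may depend on the ratio in this one step. I would try the $\alpha_0\le\alpha_1$ route first and check whether the paper's later use only needs the bound with $\alpha_{\min}$-weighted norms.
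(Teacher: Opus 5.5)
Your upper bound is the paper's argument: inverse and discrete trace estimates, with \eqref{eq_H1trace} and $\aver{\alpha}_w\le 2\alpha_{\min}$ on $\Gamma_K$. For the lower bound the paper argues differently. It quotes a Poincar\'e inequality on the physical domain, $\|\alpha^{\frac12}v_h\|_{L^2(\Omega)}\le C\DGenorm{v_h}$ (Johansson--Larson, Lemma 7). It then carries control from $K^{\sigma_i}\subset\Omega_i$ to each full cut element along a face-connected chain, paying gradient and face-jump terms, and injectivity of $\sigma_i$ bounds the overlap. Your route (Friedrichs inequalities on the active meshes plus an explicit split on $\Gamma$) is a reasonable alternative, and it shows exactly where the two coefficients interact.

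Your suspicion about robustness is correct, and it is not a weakness of your route: with $C$ independent of $\alpha$ the lower bound is false when $\alpha_0\gg\alpha_1$. Here is a counterexample.
\begin{itemize}
\item Take $v_h^{\pi_0}\equiv 1$; the reconstruction reproduces constants.
\item Take $v_h^{\pi_1}=\mR^{m,1}\phi$ with $\phi$ smooth, $\phi\equiv 1$ on a fixed neighbourhood of $\overline{\Omega}_0$, and $\phi=0$ on $\partial\Omega$.
\item For $h$ small, every patch $\mS_K^1$ with $K\in\MThG$ lies in that neighbourhood, so $\jump{v_h}=0$ on $\Gamma$.
\item The $\alpha_0$-weighted terms vanish, and $\DGsnorm{v_h}^2\le C_\phi\,\alpha_1$.
\item The left-hand side squared is at least $\alpha_0|\Omega_0|$.
\end{itemize}
So your $\alpha_0>\alpha_1$ branch (the global $\tilde v$) cannot be completed by any argument. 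The paper's proof does not escape this either. The failure sits inside the quoted inequality, which is a single-coefficient result; the same function violates it. For two coefficients it holds only with a factor $(\alpha_0/\alpha_1)^{1/2}$ when $\alpha_0>\alpha_1$.

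What your split does prove, once the trace step below is repaired and using $\alpha_{\min}\le\aver{\alpha}_w$ and $\alpha_{\min}\le\alpha_0,\alpha_1$, is
\[
\alpha_1\|v_h^{\pi_1}\|_{L^2(\Ohl)}^2+\alpha_{\min}\|v_h^{\pi_0}\|_{L^2(\Oho)}^2\le C\DGsnorm{v_h}^2.
\]
This is the stated bound with the sharp factor $\max(1,\alpha_0/\alpha_1)^{1/2}$, and the lemma should be stated that way. Its $\alpha_{\min}$-weighted consequence is all Theorem \ref{th_kappaA} needs, since that bound carries $\alpha_{\max}/\alpha_{\min}$ anyway. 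Lemma \ref{le_weakerapp} also quotes the $\alpha_i$-weighted form, but there the $L^2$ part can be avoided: subtract the mean of $v_{h,i}$ on $\Omega_i$ before extending, since piecewise constants reproduce constants.

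Two steps need repair even in the branch you keep. First, summing \eqref{eq_H1trace} over $\MThG$ gives
\[
\alpha_1\|v_h^{\pi_1}\|_{L^2(\Gamma)}^2\le C\alpha_1\Big(h^{-1}\|v_h^{\pi_1}\|_{L^2(\OhG)}^2+h\sum_{K\in\MThG}\|\nabla v_h^{\pi_1}\|_{L^2(K)}^2\Big).
\]
The factor $h^{-1}$ is not absorbed by the $i=1$ Friedrichs bound; ``$h\le C$'' goes the wrong way, because elementwise traces do not see that $\OhG$ is a layer of width $O(h)$. You need a global trace inequality for broken functions. One way:
\begin{itemize}
\item take the Karakashian--Pascal conforming companion $Ev_h^{\pi_1}$ on $\MThl$ (the tool already used in Lemma \ref{le_weakerapp});
\item apply the continuous trace theorem on the fixed domain $\Omega_1$ to $Ev_h^{\pi_1}$;
\item estimate $v_h^{\pi_1}-Ev_h^{\pi_1}$ on each $\Gamma_K$ by \eqref{eq_H1trace}, which now costs only face-jump terms.
\end{itemize}
Second, Brenner-type inequalities on $\Ohl$ and $\Oho$ have domain-dependent constants, and these domains change with $h$ and with the position of $\Gamma$. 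Uniformity follows from the same companion argument on the fixed $\Omega_i$, combined with the chain to $K^{\sigma_i}$ for the cut elements.
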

\begin{proof}
  The upper bound in \eqref{eq_L2DGnorm} is straightforward, following
  the inverse estimate and the trace estimate. 
  For the lower bound, we first state that $\| \alpha^{\frac12} v_h
  \|_{L^2(\Omega)} \leq C \DGenorm{v_h}$ (see \cite[Lemma
  7]{Johansson2013high}).
  Then, it remains to bound the $L^2$ norms for cut elements. 
  For any $K \in \MThG$ and $i = 0, 1$, there exists a sequence of
  elements $K_1, K_2, \ldots, K_M$ such that $K_j \in \MThi$, $K_1 =
  K$, $K_M = K^{\sigma_i}$ and $K_j$ shares a common face $e_j$ with
  $K_{j+1}$.
  We derive that 
  \begin{displaymath}
    \| v_h^{\pi_i} \|_{L^2(K_j)}^2 \leq C( h_{e_j}^{-1} \|
    \jump{v_h^{\pi_i}} \|_{L^2(e_j)}^2 + \| \nabla v_h^{\pi_i}
    \|_{L^2(K_j)}^2 +  \| \nabla v_h^{\pi_i}
    \|_{L^2(K_{j+1})}^2 + \| v_h^{\pi_i} \|_{L^2(K_{j+1})}^2 ), \quad
    1 \leq j \leq M - 1.
  \end{displaymath}
  Hence, there holds 
  \begin{displaymath}
    \| \alpha_i^{\frac12} v_h^{\pi_i} \|_{L^2(K)}^2 \leq C( \|
    \alpha_i^{\frac12} v_h^{\pi_i} \|_{L^2(K^{\sigma_i})}^2 + \sum_{j
    = 1}^M \| \alpha_i^{\frac12} \nabla v_h^{\pi_i} \|_{L^2(K_j)}^2 +
    \sum_{j = 1}^{M-1} \alpha_i h_{e_j}^{-1} \| \jump{v_h^{\pi_i}}
    \|_{L^2(e_j)}^2).
  \end{displaymath}
  Summation over all cut elements yields $\|\alpha_i^{\frac12}
  v_h^{\pi_i} \|_{L^2(\OhG)} \leq C \DGsnorm{v_h}$, which completes
  the proof.
\end{proof}
\begin{theorem}
  There exists a constant $C$ such that
  \begin{equation}
    \kappa(A_m) \leq C \Lambda_m^4 \frac{\alpha_{\max}}{\alpha_{\min}}
    h^{-2}. 
    \label{eq_kappaA}
  \end{equation}
  \label{th_kappaA}
\end{theorem}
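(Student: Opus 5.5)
The plan is to bound the extreme eigenvalues of the symmetric positive definite matrix $A_m$ through Rayleigh quotients, expressed in the coefficient vector $\bmr{x}$ of $v_h=\mR^m w_h$ with respect to the basis $\{\lambda_{K,i}\}$. By \eqref{eq_mRmivhi}, $\bmr{x}$ is the vector of nodal values $w_{h,i}(\bm{x}_K)$, $K\in\MThi$, $i=0,1$. Since $\bmr{x}^T A_m \bmr{x} = a_h(v_h,v_h)$, it suffices to show
\[
C_1 \Lambda_m^{-2}\alpha_{\min} h^{d} |\bmr{x}|^2 \leq a_h(v_h,v_h) \leq C_2 \Lambda_m^{2}\alpha_{\max} h^{d-2} |\bmr{x}|^2 .
\]
The ratio of these two bounds is $C\Lambda_m^4 \frac{\alpha_{\max}}{\alpha_{\min}} h^{-2}$, which is exactly \eqref{eq_kappaA}.

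\emph{Step 1: the vector $\ell^2$ norm versus the $L^2$ norm.} Set $N(v_h)^2 := \|\alpha_0^{1/2} v_h^{\pi_0}\|_{L^2(\Oho)}^2 + \|\alpha_1^{1/2} v_h^{\pi_1}\|_{L^2(\Ohl)}^2$. We show
\[
h^d \sum_{i}\sum_{K\in\MThi}\alpha_i |w_{h,i}(\bm{x}_K)|^2 \leq C N(v_h)^2 \leq C\Lambda_m^2 h^d \sum_{i}\sum_{K}\alpha_i |w_{h,i}(\bm{x}_K)|^2 .
\]
For the lower bound, the constraint in \eqref{eq_leastsquares} gives $v_h^{\pi_i}(\bm{x}_K)=w_{h,i}(\bm{x}_K)$, since $\bm{x}_K\in\mC_K^i$. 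The inverse estimate $|p(\bm{x}_K)|^2\leq Ch_K^{-d}\|p\|_{L^2(K)}^2$ for polynomials on the whole element $K$ then gives the bound elementwise. It matters here that the $\pi_i$ parts are measured on full elements of $\Ohi$, not on the cut pieces. For the upper bound, the definition \eqref{eq_Lambda} together with Lemma \ref{le_RKmistability} gives $\|\mR^{m,i}_K w_{h,i}\|_{L^2(K)}^2\leq C\Lambda_m^2 h^d\sum_{K'\in\mS_K^i}|w_{h,i}(\bm{x}_{K'})|^2$. Summing over $K$ uses that each $K'$ lies in a bounded number of patches, since patch depth is at most $t_m$ and the mesh is quasi-uniform.

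\emph{Step 2: upper bound for $a_h$.} We use \eqref{eq_ahbound} and then Lemma \ref{le_energynorm} to get $a_h(v_h,v_h)\leq C\DGtenorm{v_h}^2\leq C\DGsnorm{v_h}^2$. The upper inequality of \eqref{eq_L2DGnorm} gives $\DGsnorm{v_h}^2\leq Ch^{-2}N(v_h)^2$. Step 1 then yields a bound by $C\Lambda_m^2 h^{d-2}\sum_{i}\sum_{K}\alpha_i|w_{h,i}(\bm{x}_K)|^2\leq C\Lambda_m^2\alpha_{\max}h^{d-2}|\bmr{x}|^2$.

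\emph{Step 3: lower bound for $a_h$.} Coercivity \eqref{eq_ahcoer} gives $a_h(v_h,v_h)\geq C\Lambda_m^{-2}\DGtenorm{v_h}^2\geq C\Lambda_m^{-2}\DGsnorm{v_h}^2$, where the second inequality is the middle inequality of Lemma \ref{le_energynorm}. The lower inequality of \eqref{eq_L2DGnorm} gives $\DGsnorm{v_h}^2\geq C N(v_h)^2$, and Step 1 gives $N(v_h)^2\geq C\alpha_{\min}h^d|\bmr{x}|^2$.

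\emph{Main obstacle: the $\Lambda_m$ bookkeeping.} The lower bound costs $\Lambda_m^{-2}$ from coercivity, and the upper bound costs $\Lambda_m^2$ from the stability of the reconstruction. In the upper bound one must not also pay the factor $\Lambda_m$ from $\DGsnorm{\cdot}\leq C\Lambda_m\DGenorm{\cdot}$. This is avoided by comparing $a_h$ directly with $\DGsnorm{\cdot}$ through $\DGtenorm{\cdot}\leq C\DGsnorm{\cdot}$.

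A second delicate point is the inverse estimate of Step 1 on cut elements. It must be applied on the full element $K$, which is possible because $\mR^{m,i}_K w_{h,i}$ is a polynomial on all of $K$, so no small-cut degeneracy enters. The constants are independent of the interface position, because Lemma \ref{le_L2DGnorm} already handles the cut elements.
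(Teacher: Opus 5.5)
\textbf{Verdict: the approach matches the paper's, but Step~3 has a gap that costs two extra powers of $\Lambda_m$.}

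Your strategy is the same as the paper's. You split the Rayleigh quotient through the weighted $L^2$ quantity $N(v_h)$. You handle the energy factor with boundedness, coercivity and Lemma~\ref{le_L2DGnorm}, and the $\ell^2$ factor with the constraint, the inverse estimate and the reconstruction stability. Steps~1 and~2 are fine.

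\textbf{The gap.} In Step~3 you use $\DGtenorm{v_h}^2 \geq C\DGsnorm{v_h}^2$ and call it the middle inequality of \eqref{eq_energynorm}. That inequality actually reads $\DGtenorm{v_h} \leq C\DGsnorm{v_h}$, the opposite direction. The reverse bound is not available with a $\Lambda_m$-free constant. The norm $\DGsnorm{\cdot}$ contains $\alpha_i\|\nabla v_h^{\pi_i}\|_{L^2(K)}^2$ on the whole cut element, including the fictitious part $K\setminus\Omega_i$, which $\DGtenorm{\cdot}$ does not see. Controlling that part is exactly the last inequality of Lemma~\ref{le_energynorm}, $\DGsnorm{v_h}\le C\Lambda_m\DGenorm{v_h}\le C\Lambda_m\DGtenorm{v_h}$. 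With the stated results used correctly, your Step~3 only gives $a_h(v_h,v_h)\ge C\Lambda_m^{-4}\DGsnorm{v_h}^2$. That yields $\kappa(A_m)\le C\Lambda_m^{6}\frac{\alpha_{\max}}{\alpha_{\min}}h^{-2}$, not the claimed $\Lambda_m^4$. Your ``main obstacle'' paragraph guards against paying $\Lambda_m$ twice in the upper bound, but the double payment actually happens in the lower bound.

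\textbf{The repair.} Use coercivity in the sharper form contained in the proof of Lemma~\ref{le_ahbc}, not the statement \eqref{eq_ahcoer}. Start from $a_h(w_h,w_h)\ge\DGenorm{w_h}^2+(\mu-C_0\tau^{-1})\eseminorm{w_h}^2-C_1\tau\DGtenorm{w_h}^2$ together with $\DGtenorm{w_h}^2\le C\Lambda_m^2\DGenorm{w_h}^2$. Choosing $\tau=c\Lambda_m^{-2}$ with $c$ small and $\mu\ge C\Lambda_m^{2}$ gives $a_h(w_h,w_h)\ge\frac12\DGenorm{w_h}^2$. The factor $\Lambda_m^{-2}$ in \eqref{eq_ahcoer} only appears when converting to $\DGtenorm{\cdot}$. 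Combining this with $\DGsnorm{w_h}\le C\Lambda_m\DGenorm{w_h}$ yields $a_h(w_h,w_h)\ge C\Lambda_m^{-2}\DGsnorm{w_h}^2$. This is exactly the bound the paper relies on: it is implicit in the final line of its proof and explicit in the proof of Theorem~\ref{th_kappaA0Am}. With it, the rest of your Step~3 gives the lower bound $C\Lambda_m^{-2}\alpha_{\min}h^d|\bmr{x}|^2$, and \eqref{eq_kappaA} follows.
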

\begin{proof}
  Let $\bmr{v}_i \in \mb{R}^{n_{e, i}}$ be the vector
  corresponding to any piecewise constant function $v_{h, i} \in V_{h,
  i}$. Let $\bmr{v} := (\bmr{v}_0, \bmr{v}_1) \in V_h$, there
  holds
  \begin{displaymath}
    \frac{\bmr{v}^T A_m \bmr{v}}{ \bmr{v}^T \bmr{v}} =
    \frac{a_{h}(v_h, v_h) }{ \|\alpha_0^{\frac12} v_h^{\pi_0} \|_{L^2(\Oho)}^2 +  \|
    \alpha_1^{\frac12} v_h^{\pi_1} \|_{L^2(\Ohl)}^2  } \frac{ \|
    \alpha_0^{\frac12}
    v_h^{\pi_0} \|_{L^2(\Oho)}^2 +  \| \alpha_1^{\frac12} v_h^{\pi_1}
    \|_{L^2(\Ohl)}^2}{ \bmr{v}^T \bmr{v}}, \quad \forall \bmr{v} \neq
    \bmr{0} \in V_h.
  \end{displaymath}
  The first term in the right hand side can be bounded by applying
  Lemma \ref{le_ahbc} - \ref{le_L2DGnorm}, that is
  \begin{align*}
    \| v_h^{\pi_i} \|_{L^2(\Ohi)}^2 = \sum_{K \in \MThi} \|
    v_h^{\pi_i} \|_{L^2(K)}^2 \leq \sum_{K \in \MThi} \Lambda_{m, K,
    i}^2 h^d \sum_{K' \in \mS^i_K} (v_h^{\pi_i})(\bm{x}_{K'})^2 
    \leq h^d \Lambda_m^2 \bmr{v}^T \bmr{v},
  \end{align*}
  By the inverse estimate, there holds 
  \begin{align*}
    \bmr{v}^T \bmr{v} = \sum_{K \in \MThi} ((v_h^{\pi_i})(\bm{x}_K))^2
    \leq \sum_{K \in \MThi} \| v_h^{\pi_i} \|_{L^{\infty}(K)}^2 \leq C
    \sum_{K \in \MThi} h_K^{-d} \| v_h^{\pi_i} \|_{L^2(K)}^2 \leq C
    h^{-d} \| v_h^{\pi_i} \|_{L^2(\Ohi)}^2.
  \end{align*}
  Collecting the above estimates immediately indicates that 
  $C \alpha_{\min} \Lambda_m^{-2} \leq \frac{\bmr{v}^T A_m
  \bmr{v}}{\bmr{v}^T \bmr{v}} \leq C \alpha_{\max} \Lambda_m^2 h^{-2}$
  for any vector $\bmr{v}$, which yields the estimate
  \eqref{eq_kappaA} and completes the proof.
\end{proof}
From Theorem \ref{th_kappaA}, the resulting matrix has a condition
number of $O(h^{-2})$ independent of the interface intersecting the
mesh.

In the rest of this section, we present a preconditioning method for
the system $A_m \bmr{x} = \bmr{b}$.
It is noticeable that the reconstructed space $U_h^m$ always has the
same dimensions for all $m \geq 0$. 
This property inspires us to precondition the high-order scheme by the
lowest-order scheme, i.e., $a_h(\cdot, \cdot)$ over the space $V_h
\times V_h$. 
We define the bilinear form $a_{h, 0}$ as follows,
\begin{equation}
  \begin{aligned}
    a_{h, 0}(v_h, w_h) &:= \sum_{e \in \MEhoI} \frac{\alpha_0}{h_e} \int_e
    \jump{v_h^{\pi_0}} \cdot \jump{w_h^{\pi_0}} \dx{s} +  \sum_{e \in
    \MEhlI \cup \MEhB } \frac{\alpha_1}{h_e} \int_e \jump{v_h^{\pi_1}} \cdot
    \jump{w_h^{\pi_1}} \dx{s} \\
    +& \sum_{K \in \MThG} \frac{\{\alpha\}_w}{h_K} \int_{\Gamma_K}
    \jump{v_h} \cdot \jump{w_h} \dx{s}, \quad \forall v_h, w_h \in
    V_h.
  \end{aligned}
  \label{eq_ah0} 
\end{equation}
Here, $a_{h, 0}(v_h, w_h) = a_h(v_h, w_h)$ for $\forall v_h, w_h \in
V_h$ with the penalty parameter fixed as $\mu = 1$.
Let $A_{0}$ be the matrix arising from $a_{h, 0}(\cdot, \cdot)$.
It can be observed that $a_{h, 0}(v_h, v_h) = \DGenorm{v_h}^2 =
\DGsnorm{v_h}^2$ for $\forall v_h \in V_h$, which immediately
indicates $A_{0}$ is invertible.
We will show that $A_0^{-1}$ serves as an optimal
preconditioner for $A_m$ with any $m \geq 1$, 
which is based on the
following equivalence property for the reconstruction operator $\mR^m$
under the energy norm.

\begin{lemma}
  There exist constants $C$ such that
  \begin{equation}
    \DGsnorm{v_h} \leq C \DGsnorm{\mc{R}^m v_h} \leq C \Lambda_m
    \DGsnorm{v_h}, \quad \forall v_h \in V_h.
    \label{eq_equivVh}
  \end{equation}
  \label{le_equivVh}
\end{lemma}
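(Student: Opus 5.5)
The proof compares the two sides term by term. Both norms contain only gradient terms on $\MThi$, face jumps on $\MEhi$ and interface jumps on $\Gamma_K$, and for $v_h \in V_h$ the gradient part of $\DGsnorm{v_h}$ vanishes. Throughout, let $w_{h,i} := v_h^{\pi_i} \in V_{h,i}$ and $p_K := \mR_K^{m,i} w_{h,i}$. The key fact is the constraint in \eqref{eq_leastsquares}: $p_K(\bm{x}_K) = w_{h,i}(\bm{x}_K)$, so $\mR^m$ preserves the values at all barycenters. The interface weight is handled by $\aver{\alpha}_w \le 2\alpha_{\min} \le 2\alpha_i$, which lets interface contributions be charged to $\alpha_i$-weighted element terms on $K \in \MThi$.

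\textbf{Lower bound} $\DGsnorm{v_h} \le C\DGsnorm{\mR^m v_h}$. This step should not involve $\Lambda_m$.

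For an interior face $e = \partial K \cap \partial K'$ in $\MEhiI$, I argue exactly as in \eqref{eq_wjumpface}. The jump of $w_{h,i}$ across $e$ is a constant, and
\begin{displaymath}
|w_K - w_{K'}|^2 = |p_K(\bm{x}_K) - p_{K'}(\bm{x}_{K'})|^2 .
\end{displaymath}
I insert $p_K(\bm{x}_e)$ and $p_{K'}(\bm{x}_e)$ for a point $\bm{x}_e \in e$ and use polynomial inverse estimates. Multiplying by $\alpha_i |e| h_e^{-1} \sim \alpha_i h^{d-2}$ then gives
\begin{displaymath}
\frac{\alpha_i}{h_e}\|\jump{w_{h,i}}\|_{L^2(e)}^2 \le C\alpha_i\bigl(\|\nabla p_K\|_{L^2(K)}^2 + \|\nabla p_{K'}\|_{L^2(K')}^2 + h_e^{-1}\|\jump{\mR^{m,i}w_{h,i}}\|_{L^2(e)}^2\bigr).
\end{displaymath}
Boundary faces are treated in the same way, using $p_K(\bm{x}_K) = w_K$.

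For a cut element $K \in \MThG$, I write
\begin{displaymath}
\jump{v_h}|_{\Gamma_K} = (p^0_K(\bm{x}_K) - p^1_K(\bm{x}_K))\un_\Gamma .
\end{displaymath}
For $\bm{x} \in \Gamma_K$ one has $|p^i_K(\bm{x}_K) - p^i_K(\bm{x})| \le Ch\|\nabla p^i_K\|_{L^\infty(K)}$, and $|\Gamma_K| \le Ch^{d-1}$ because $\Gamma$ is $C^2$. An inverse estimate then yields
\begin{displaymath}
\frac{\aver{\alpha}_w}{h_K}\|\jump{v_h}\|^2_{L^2(\Gamma_K)} \le C\Bigl(\frac{\aver{\alpha}_w}{h_K}\|\jump{\mR^m v_h}\|^2_{L^2(\Gamma_K)} + \sum_{i=0,1}\alpha_i\|\nabla p^i_K\|^2_{L^2(K)}\Bigr).
\end{displaymath}
Summing over faces and cut elements gives the lower bound. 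The gradient terms on the whole of $K$ appearing here are exactly those contained in $\DGsnorm{\cdot}$.

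\textbf{Upper bound} $\DGsnorm{\mR^m v_h} \le C\Lambda_m\DGsnorm{v_h}$.

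For each $K \in \MThi$, subtract the constant $w_K$. Since $\mR_K^{m,i}$ reproduces constants, $p_K - w_K = \mR_K^{m,i}(w_{h,i} - w_K)$, and Lemma \ref{le_RKmistability} with an inverse estimate gives
\begin{displaymath}
\|\nabla p_K\|^2_{L^2(K)} \le Ch^{-2}\|p_K - w_K\|^2_{L^2(K)} \le C\Lambda_m^2 h^{d-2}\max_{K' \in \mS_K^i}|w_{K'} - w_K|^2 .
\end{displaymath}
Each difference $w_{K'} - w_K$ is telescoped along a chain of face-adjacent elements of $\MThi$ of length $O(t_m)$ joining $K$ to $K'$, as in \eqref{eq_wmaxdiffwmin}. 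Each link satisfies $h^{d-2}|w_{K_j} - w_{K_{j+1}}|^2 \le Ch_{e_j}^{-1}\|\jump{w_{h,i}}\|^2_{L^2(e_j)}$. Hence $\alpha_i\|\nabla p_K\|^2_{L^2(K)}$ is bounded by $C\Lambda_m^2$ times a sum of face-jump terms of $v_h$ in a bounded neighbourhood of $K$; the factors $t_m$ are absorbed into $\Lambda_m$ or into $C$.

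The face jumps of $\mR^m v_h$ are split as
\begin{displaymath}
\jump{v_h} + \jump{\mR^m v_h - v_h},
\end{displaymath}
and the second part is controlled by the discrete trace inequality $h_e^{-1}\|q\|^2_{L^2(e)} \le Ch^{-2}\|q\|^2_{L^2(K)}$ applied to $q = p_K - w_K$. That is the same quantity just bounded. The interface term is split identically, using $\|q\|^2_{L^2(\Gamma_K)} \le Ch^{-1}\|q\|^2_{L^2(K)}$ for polynomials $q$, which follows from \eqref{eq_H1trace} and an inverse estimate, together with $\aver{\alpha}_w \le 2\alpha_i$.

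\textbf{Main obstacle.} I expect the hardest step to be the bookkeeping in the upper bound. Two things must be checked. First, the chain from $K$ to any element of its patch must stay inside $\MThi$, so that it only uses faces carrying $\alpha_i$-weighted penalties; this is subtle near $\Gamma$ for thin cut regions. If needed, I would route such chains through the partners $(\cdot)^{\sigma_i}$ of Assumption \ref{as_neighbour}, as in the proof of Lemma \ref{le_energynorm}. Second, each face must be counted only a bounded number of times overall. This follows because all patches lie in $\Delta_K^{t_m+1}$ and $\MTh$ is quasi-uniform, so the overlap constant depends only on $t_m$ and $C_\nu$.
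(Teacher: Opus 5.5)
Your proposal is correct and follows essentially the same route as the paper. The lower bound uses the barycenter constraint with inverse/trace estimates and $\aver{\alpha}_w \le 2\alpha_i$. The upper bound splits $\jump{\mR^m v_h} = \jump{v_h} + \jump{\mR^m v_h - v_h}$, controls the local reconstruction error through the patch stability constant, and telescopes value differences across face-adjacent elements of $\MThi$.

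The only differences are cosmetic. You apply Lemma \ref{le_RKmistability} to $w_{h,i} - w_K$ and bound the trace terms directly by $h^{-2}\|p_K - w_K\|_{L^2(K)}^2$. The paper instead subtracts $w_{\min}$ as in \eqref{eq_nablavhpii} and routes the trace terms through the gradient first.
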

\begin{proof}
  We first prove the lower bound in \eqref{eq_equivVh}.
  For any face $e \in \MEhoI$ shared by two elements $K_-$
  and $K_+$, the jump $\jump{v_h^{\pi_0}}|_{e}$ can be bounded as
  \eqref{eq_wjumpface}, that is
  \begin{displaymath}
    \alpha_0 h_e^{-1} \| \jump{ v_h^{\pi_0}} \|_{L^2(e)}^2 \leq C
    \alpha_0 ( \| \nabla
    \mR^{m, 0} v_h^{\pi_0} \|_{L^2(K_-)}^2  +  \| \nabla
    \mR^{m, 0} v_h^{\pi_0} \|_{L^2(K_+)}^2 + h_e^{-1} \| \jump{
    \mR^{m, 0} v_h^{\pi_0}} \|_{L^2(e)}^2). 
  \end{displaymath}
  A similar estimate can be established for faces $e \in \MEhlI \cup
  \MEhB$.
  For any cut element $K \in \MThG$, we derive that 
  \begin{align*}
    \aver{\alpha}_w h_K^{-1} &\| \jump{ v_h} \|_{L^2(\Gamma_K)}^2 
    \leq C \aver{\alpha}_w h_K^{-1} (\|
    \jump{v_h - \mR^m v_h} \|_{L^2(\Gamma_K)}^2 + \|
    \jump{\mR^m v_h} \|_{L^2(\Gamma_K)}^2), \\
    & \leq C h_K^{-1} ( \alpha_0 \|v_h^{\pi_0} - \mR^{m, 0}_K v_h^{\pi_0}
    \|_{L^2(\Gamma_K)}^2 + \alpha_1 \|v_h^{\pi_1} - \mR^{m, 1}_K v_h^{\pi_1}
    \|_{L^2(\Gamma_K)}^2 +  \aver{\alpha}_w \| \jump{\mR^m v_h}
    \|_{L^2(\Gamma_K)}^2).
  \end{align*}
  For $i = 0, 1$, together with the constraint in
  \eqref{eq_leastsquares} that $\mR_K^{m, i} v_h^{\pi_i}(\bm{x}_K) =
  v_h^{\pi_i}(\bm{x}_K)$, there holds
  \begin{align*}
    \alpha_i h_K^{-1} \|v_h^{\pi_i} - \mR^{m, i}_K v_h^{\pi_i}
    \|_{L^2(\Gamma_K)}^2 \leq C \alpha_i  h_K^{d-2} \|  \mR^{m, i}_K
    v_h^{\pi_i} -  \mR^{m, i}_K v_h^{\pi_i}(\bm{x}_K)
    \|_{L^{\infty}(K)}^2 \leq C \alpha_i \|\nabla \mR^{m, i}_K
    v_h^{\pi_i} \|_{L^2(K)}^2.
  \end{align*}
  Collecting all estimates yields the lower bound in
  \eqref{eq_equivVh}.

  We next turn to the upper bound of \eqref{eq_equivVh}.
  For the trace term $\| \jump{\mR^{m, 0} v_h^{\pi_0}} \|_{L^2(e)}$ on
  $e \in \MEhoI$, we let $e$ be shared by elements $K_+$ and $K_-$ and
  derive that
  \begin{align*}
    \alpha_0 h_e^{-1} \| \jump{\mR^{m, 0} v_h^{\pi_0}} \|_{L^2(e)}^2 \leq C
    \alpha_0 h_e^{-1} (\| \jump{v_h^{\pi_0}} \|_{L^2(e)}^2 +  \|
    \jump{\mR^{m,0} v_h^{\pi_0}- v_h^{\pi_0}  } \|_{L^2(e)}^2).
  \end{align*}
  From the constraint in \eqref{eq_leastsquares}, the second term
  can be bounded by
  \begin{align*}
    h_e^{-1} \| \jump{\mR^{m,0} v_h^{\pi_0}- v_h^{\pi_0}  }
    \|_{L^2(e)}^2 & \leq C h_e^{-1} ( \| \mR_{K_-}^{m,0} v_h^{\pi_0}
    - v_h^{\pi_0}(\bm{x}_{K_-}) \|_{L^2(e)}^2 +  \| \mR_{K_+}^{m,0}
    v_h^{\pi_0} - v_h^{\pi_0}(\bm{x}_{K_+}) \|_{L^2(e)}^2) \\
    & \leq C( \| \nabla \mR_{K_-}^{m,0} v_h^{\pi_0} \|_{L^2(K_-)}^2 +
    \| \nabla \mR_{K_+}^{m,0} v_h^{\pi_0} \|_{L^2(K_+)}^2).
  \end{align*}
  Analogously, the trace $\| \jump{\mR^{m, 1} v_h^{\pi_1}} \|_{L^2(e)}$
  on $e \in \MEhlI \cup \MEhB$ can be estimated by the same procedure.
  For cut elements, there holds
  \begin{align*}
    &\aver{\alpha}_w  h_K^{-1} \| \jump{\mR^m v_h}
    \|_{L^2(\Gamma_K)}^2 \leq C h_K^{-1}  ( \aver{\alpha}_w \| \jump{v_h
    - \mR^m v_h} \|_{L^2(\Gamma_K)}^2 +  \aver{\alpha}_w \|
    \jump{v_h} \|_{L^2(\Gamma_K)}^2 ) \\
    & \leq C h_K^{-1} (\alpha_0 \| \mR_K^{m, 0} v_h^{\pi_0} -
    v_h^{\pi_0}(\bm{x}_K) \|_{L^2(\Gamma_K)}^2 + \alpha_1 \| \mR_K^{m,
    1} v_h^{\pi_1} - v_h^{\pi_1}(\bm{x}_K) \|_{L^2(\Gamma_K)}^2 +
       \aver{\alpha}_w  \| \jump{v_h} \|_{L^2(\Gamma_K)}^2).
  \end{align*}
  Combining above estimates leads to
  \begin{displaymath}
    \DGsnorm{\mR^m v_h}^2 \leq C ( \sum_{K \in \MTho}\alpha_0
    \|\nabla \mR^{m,
    0}_K v_h^{\pi_0} \|_{L^2(K)}^2 +  \sum_{K \in \MThl} \alpha_1
      \|\nabla \mR^{m,
      1}_K v_h^{\pi_1} \|_{L^2(K)}^2) + C \DGsnorm{v_h}^2.
  \end{displaymath}
  It remains to bound the $L^2$ norm for the gradient.
  For any $K \in \MThi$, we let $v_{\min} := \min_{K' \in \mS_K^i}
  (v_h^{\pi_i} |_{K'})$ and $v_{\max} := \max_{K' \in \mS_K^i}
  (v_{h}^{\pi_i}|_{K'})$. 
  As \eqref{eq_nablavhpii}, we find that
  \begin{displaymath}
    \| \nabla \mR_K^{m, i} v_h^{\pi_i} \|_{L^2(K)}^2 \leq C
    \Lambda_{m, K, i}^2 \# \mS_K^i (v_{\max} - v_{\min})^2,
  \end{displaymath}
  and by \eqref{eq_wmaxdiffwmin} - \eqref{eq_wjumpface}, there holds
  \begin{displaymath}
    \sum_{K \in \MThi} \alpha_i \| \nabla \mR^{m, i}_K v_h^{\pi_i} \|_{L^2(K)}^2
    \leq C \Lambda_m^2 \sum_{e \in \MEhiI} \alpha_i h_e^{-1} \|
    \jump{v_h^{\pi_i}}\|_{L^2(e)}^2 \leq C \Lambda_m^2
    \DGsnorm{v_h}^2,
  \end{displaymath}
  which leads to the desired estimate \eqref{eq_equivVh} and completes
  the proof.
\end{proof}
A direct consequence of \eqref{eq_equivVh} is the following estimate
to the preconditioned system.
\begin{theorem}
  There exists a constant $C$ such that 
  \begin{equation}
    \kappa(A_0^{-1} A_m) \leq C  \Lambda_m^4.
    \label{eq_kappaA0Am}
  \end{equation}
  \label{th_kappaA0Am}
\end{theorem}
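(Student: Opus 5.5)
The plan is to bound the Rayleigh quotient $\bmr{v}^T A_m \bmr{v} / \bmr{v}^T A_0 \bmr{v}$ from above and below, uniformly in $\bmr{v} \neq \bmr{0}$. The ratio of the two bounds then controls $\kappa(A_0^{-1} A_m)$. Both $A_0$ and $A_m$ are symmetric positive definite: $A_0$ by the observation after \eqref{eq_ah0}, and $A_m$ by Lemma \ref{le_ahbc}. Hence $A_0^{-1}A_m$ is similar to $A_0^{-1/2}A_mA_0^{-1/2}$, and its condition number is exactly $\lambda_{\max}/\lambda_{\min}$ of the generalized eigenproblem $A_m\bmr{v} = \lambda A_0 \bmr{v}$.

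The key identification is this. A vector $\bmr{v}$ represents the same nodal values $v_h(\bm{x}_K)$ for both matrices. By the expansion \eqref{eq_mRmivhi}, we therefore have $\bmr{v}^T A_m \bmr{v} = a_h(\mR^m v_h, \mR^m v_h)$, where $v_h \in V_h$ is the piecewise constant function with these values. On the other side, $\bmr{v}^T A_0 \bmr{v} = a_{h,0}(v_h,v_h) = \DGsnorm{v_h}^2$, using the remark following \eqref{eq_ah0} that $a_{h,0}(v_h,v_h) = \DGenorm{v_h}^2 = \DGsnorm{v_h}^2$ on $V_h$.

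\textbf{Upper bound.} By \eqref{eq_ahbound}, $a_h(\mR^m v_h,\mR^m v_h) \leq C \DGtenorm{\mR^m v_h}^2$. By Lemma \ref{le_energynorm} applied to $\mR^m v_h \in U_h^m$, this is at most $C\DGsnorm{\mR^m v_h}^2$. The upper half of Lemma \ref{le_equivVh} then gives at most $C\Lambda_m^2 \DGsnorm{v_h}^2$.

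\textbf{Lower bound.} By the coercivity \eqref{eq_ahcoer}, $a_h(\mR^m v_h,\mR^m v_h) \geq C\Lambda_m^{-2}\DGtenorm{\mR^m v_h}^2$. Next, \eqref{eq_energynorm} gives $\DGtenorm{w_h} \geq C \DGsnorm{w_h}$; this is the middle inequality, which holds with no $\Lambda_m$ loss. It yields $a_h(\mR^m v_h,\mR^m v_h) \geq C\Lambda_m^{-2}\DGsnorm{\mR^m v_h}^2$. Finally, the lower half of Lemma \ref{le_equivVh} gives at least $C\Lambda_m^{-2}\DGsnorm{v_h}^2$.

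Combining the two bounds, $C\Lambda_m^{-2} \leq \lambda \leq C\Lambda_m^2$, and so $\kappa(A_0^{-1}A_m) \leq C\Lambda_m^4$.

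The constants are independent of $h$, $\alpha$ and the cut configuration, because every cited lemma has that property. In particular, the coefficient weights $\alpha_i$ and $\aver{\alpha}_w$ appear identically in $\DGsnorm{\cdot}$ on both sides. This is why no factor $\alpha_{\max}/\alpha_{\min}$ enters, unlike in Theorem \ref{th_kappaA}.

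The only delicate step is bookkeeping which inequality of \eqref{eq_energynorm} is used in each direction, so that the powers of $\Lambda_m$ do not accumulate beyond $\Lambda_m^4$. In the upper bound, using $\DGtenorm{\cdot}\le C\DGsnorm{\cdot}$ costs nothing, and only Lemma \ref{le_equivVh} contributes a factor $\Lambda_m^2$. In the lower bound, only the coercivity constant contributes $\Lambda_m^{-2}$. One must also check that $\mu$ in $a_h$ is fixed large enough for Lemma \ref{le_ahbc}. The value of $\mu$ only affects constants, since $\mu$ weights the same jump terms that appear in $\DGsnorm{\cdot}$.
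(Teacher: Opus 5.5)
Your overall structure matches the paper's proof. You identify $\bmr{v}^T A_m \bmr{v} = a_h(\mR^m v_h,\mR^m v_h)$ and $\bmr{v}^T A_0\bmr{v} = \DGsnorm{v_h}^2$. You get the upper bound from \eqref{eq_ahbound}, $\DGtenorm{\cdot}\le C\DGsnorm{\cdot}$ and the upper half of \eqref{eq_equivVh}, which is correct. The lower bound uses coercivity plus the lower half of \eqref{eq_equivVh}.

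The lower bound fails exactly at the step you flagged as delicate. The middle inequality of \eqref{eq_energynorm} is $\DGtenorm{w_h}\le C\DGsnorm{w_h}$, which points the wrong way for you. The only upper bound for $\DGsnorm{\cdot}$ in Lemma~\ref{le_energynorm} is $\DGsnorm{w_h}\le C\Lambda_m\DGenorm{w_h}\le C\Lambda_m\DGtenorm{w_h}$, and that $\Lambda_m$ cannot simply be dropped. On a cut element, $\DGsnorm{\cdot}$ measures $\nabla w_h^{\pi_i}$ on all of $K$, while $\DGtenorm{\cdot}$ only sees $K^i$, $e^i$ and $\Gamma_K$. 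For general piecewise polynomials no such bound holds at all, because of small cuts. Lemma~\ref{le_energynorm} obtains it on $U_h^m$ only through the reconstruction, at the cost of $\Lambda_m$. If you use \eqref{eq_ahcoer} and \eqref{eq_energynorm} as stated, your chain gives only $a_h(\mR^m v_h,\mR^m v_h)\ge C\Lambda_m^{-2}\DGtenorm{\mR^m v_h}^2\ge C\Lambda_m^{-4}\DGsnorm{\mR^m v_h}^2$. That yields $\kappa(A_0^{-1}A_m)\le C\Lambda_m^{6}$, not $\Lambda_m^4$.

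The repair is to avoid passing the lower bound through $\DGtenorm{\cdot}$. The proof of Lemma~\ref{le_ahbc} starts from $a_h(w_h,w_h)\ge\DGenorm{w_h}^2+(\mu-C_0\tau^{-1})\eseminorm{w_h}^2-C_1\tau\DGtenorm{w_h}^2$. It closes this using $\DGtenorm{w_h}^2\le C\Lambda_m^2\DGenorm{w_h}^2$ and a choice of $\tau$ and then $\mu$. So for $\mu$ sufficiently large it actually delivers $a_h(w_h,w_h)\ge\frac12\DGenorm{w_h}^2$ on $U_h^m$, and \eqref{eq_ahcoer} is a weakened consequence of this. Combining it with the last inequality of \eqref{eq_energynorm} gives $\DGsnorm{w_h}^2\le C\Lambda_m^2\DGenorm{w_h}^2\le C\Lambda_m^2 a_h(w_h,w_h)$.

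This is precisely the step $\DGsnorm{\mR^m v_h}^2\le C\Lambda_m^2 a_h(\mR^m v_h,\mR^m v_h)$ in the paper's chain. The paper cites \eqref{eq_ahcoer} for it, but what is really used there is this $\DGenorm{\cdot}$-coercivity. With this replacement your lower bound becomes $C\Lambda_m^{-2}\DGsnorm{v_h}^2$, and your argument coincides with the paper's.
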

\begin{proof}
  From \eqref{eq_ahbound}, \eqref{eq_ahcoer} and \eqref{eq_equivVh},
  we deduce that 
  \begin{align*}
    a_{h,0}(v_h, v_h)  = \DGsnorm{v_h}^2 \leq \DGsnorm{\mR^m v_h}^2 &\leq
    C \Lambda_m^{2} a_h(\mR^m v_h, \mR^m v_h) \\
    & \leq C \Lambda_m^2 \DGsnorm{\mR^m v_h}^2  \leq
    C \Lambda_m^4 \DGsnorm{v_h}^2 = C \Lambda_m^4  a_{h,0}(v_h, v_h),
    \quad \forall v_h \in V_h,
  \end{align*}
  which brings \eqref{eq_kappaA0Am} and completes the proof.
\end{proof}
By Theorem \ref{th_kappaA0Am}, the resulting linear system can be
solved by using the Krylov iterative method (e.g. CG and GMRES) with
$A_0^{-1}$ as the preconditioner. 
In addition, this preconditioner is robust to the coefficient.
Generally speaking, in the iterative steps, the matrix-vector product
$\bmr{y} = A_0^{-1} \bmr{z}$ is computed by solving the linear system
$A_0 \bmr{y} = \bmr{z}$. 
Although $A_0$ arises from the lowest-order scheme, it still has a
condition number of $O(h^{-2})$ as Theorem \ref{th_kappaA}.
The second part in our preconditioning method is to provide a fast solver
as an approximation to the inverse $A_0^{-1}$.
We follow the multigrid idea of the smoothed aggregation method
\cite{Vanek2001convergence, Vanek1996algebraic}.

Assume that the mesh $\MTh$ is obtained by successively refining a
coarse (unfitted) mesh $\mc{T}_{h_1}$ for several times, i.e., there
exists a series of meshes $\mT_{h_1}, \ldots, \mT_{h_J}$
such that $\mT_{h_J} = \MTh$ and $\mT_{h_{j+1}}$ is obtained by
globally
refining the mesh $\mT_{h_j}$, and thus $h_j = 2^{1-j} h_1$ and $h_J =
h$.
For $\mT_{h_j}$, we let $\mT_{h_j, i}(i = 0, 1)$, $\Omega_{h_j, i}$ be
the same as in Section \ref{sec_preliminaries}, and
we let $V_{h_j, i}$ be the piecewise constant space on $\mT_{h_j, i}$. 
Note that $V_{h_j, i}$ is defined on the domain $\Omega_{h_j, i}$, and 
there holds $\Omega_{h_j, i} \subset \Omega_{h_k, i}$ for
any $j \geq k$, which further implies $V_{h_k, i} \subset V_{h_j, i}$. 
From this fact, we introduce an operator $I_{k, j}^i(j \geq
k): V_{h_k, i} \rightarrow V_{h_j, i}$ such that 
$I_{k, j}^i v_{h_k, i} := v_{h_k, i}|_{\Omega_{h_j, i}}$  for $\forall
v_{h_k, i} \in V_{h_k, i}$.
We define an inner product $(\cdot, \cdot)_{\alpha, 2}$ as
\begin{displaymath}
  (v, w)_{\alpha, 2} := (\alpha_0 v, w)_{L^2(\Omega_0)} + (\alpha_1 v,
  w)_{L^2(\Omega_1)}, \quad \forall v, w \in L^2(\Omega), 
\end{displaymath}
with the induced norm $\anorm{\cdot}$.
Moreover,
we introduce the operator $I_{j, k}^i(j \geq k) :
V_{h_j, i} \rightarrow V_{h_k, i}$ such that $I_{j, k}^i v_{h_j, i}$
is the projection of $v_{h_j, i}$ into $V_{h_k, i}$ under the norm
$\anorm{\cdot}$
for $\forall v_{h_j, i} \in V_{h_j, i}$.
Then, we have that $(I_{k, j}^i)^T = I_{j, k}^i$ under the inner
product $\ainnerprod{\cdot}{\cdot}$.
We define a global space 
$V_{h_j} := V_{h_j, 0} \cdot \chi_0 + V_{h_j, 1} \cdot \chi_1$ by
concatenating spaces $V_{h_j, 0}$ and $V_{h_j, 1}$, and define the
operator $I_{j, k} := I_{j, k}^0 \cdot \chi_0 + I_{j, k}^1 \cdot
\chi_1$.

To present the multigrid method, we introduce a linear operator 
$\mA: V_{h_J} \rightarrow V_{h_J}$ on the finest level $J$ by
\begin{displaymath}
  \ainnerprod{\mA v_h}{w_h} = a_{h, 0}(v_h, w_h), \quad \forall v_h, w_h
  \in V_{h_J}, 
\end{displaymath}
with the induced norm $ \| v_h \|_{\mA}^2 = \ainnerprod{\mA v_h}{v_h}$.
On each level $j \leq J$, we define the operator $\mA_j$ and the
symmetric prolongator smoother $S_j$ in a recursive manner, which read
\begin{equation}
  \mA_{j} := (S_{j+1} I_{j, j+1})^T  \mA_{j+1} S_{j+1} I_{j, j+1},
  \quad S_j := I - (\lambda_j)^{-1} \mA_j, \quad \mA_J := \mA,
  \label{eq_Al}
\end{equation}
where $\lambda_j$ is a parameter to be specified later.
For any linear operator, we let $\sigma(\cdot)$ and $\rho(\cdot)$ 
be its spectrum and its largest eigenvalue, respectively. 
We will show that $\lambda_j$ can be chosen as 
\begin{equation}
  \lambda_j = 4^{j - J} \lambda, \quad \lambda > \rho(\mA), 
  \label{eq_lambdal}
\end{equation}
and serve as a bound of $\rho(\mA_j)$.
From Theorem \ref{th_kappaA}, the parameter $\lambda$ can be selected
as $\lambda = O(h^{-2})$.
By \eqref{eq_Al}, we present a $\mc{W}$-cycle multigrid algorithm in
Algorithm~\ref{alg_mgsolver}. 
From \eqref{eq_Al}, the operator $\mA_j$ can be rewritten into the
form $\mA_j = Q_j^T \mA_J Q_j$, where $Q_j: V_{h_j}
\rightarrow V_{h_J}$ is defined by
\begin{displaymath}
  Q_j = S_{J}I_{J-1, J} S_{J-1} I_{J-2, J-1} \ldots
  S_{j+1}I_{j,j+1}, \quad 1 \leq j \leq J - 1, \quad Q_J = I. 
\end{displaymath}
By \cite{Vanek2001convergence}, the convergence analysis to
Algorithm~\ref{alg_mgsolver} is established on the following
results.

The first is the weak approximation property of the space $V_{h_j}$.
\begin{lemma}
  There exists a constant $C$ such that
  \begin{equation}
    \min_{w_{h_j} \in V_{h_j}} \anorm{v_h -  w_{h_j}}^2
    \leq C 4^{J - j} h^2 \| v_h \|_{\mA}^2, \quad \forall v_h \in
    V_{h_J}, \quad 1 \leq j \leq J.
    \label{eq_weakerapp}
  \end{equation}
  \label{le_weakerapp}
\end{lemma}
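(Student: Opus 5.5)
The natural choice is to take $w_{h_j}$ as a local average of $v_h$ over coarse cells. Fix the coarse level $j$ and $i \in \{0,1\}$. For each coarse element $T \in \mT_{h_j, i}$, its descendants in $\mT_{h_J}$ are the $2^{d(J-j)}$ fine elements obtained by $J-j$ uniform refinements. Some of these children may lie outside $\Omega_{h_J, i}$ when $T$ is cut. Define $w_{h_j,i}|_T$ as the $\alpha$-weighted average of $v_{h}^{\pi_i}$ over those children $K \subset T$ with $K \in \mT_{h_J,i}$. Then
\[
\min_{w} \anorm{v_h - w}^2 \le \sum_i \sum_{T} \alpha_i \| v_h^{\pi_i} - w_{h_j,i} \|_{L^2(T \cap \Omega_i)}^2 ,
\]
where I use the fact that the restriction to $\Omega_{h_J,i}$ is exactly the injection $I^i_{j,J}$. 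The right-hand side must then be bounded by $C 4^{J-j} h^2 a_{h,0}(v_h,v_h)$.

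\textbf{Interior coarse elements.} If $T$ is not cut, all children belong to $\mT_{h_J,i}$. I will use a discrete Poincaré inequality for piecewise constants on the refined submesh of $T$:
\[
\|v - \bar v\|_{L^2(T)}^2 \le C h_j^2 \sum_{e \subset T} h_e^{-1}\|\jump{v}\|_{L^2(e)}^2 .
\]
This follows either by scaling, since the refined submesh of $T$ is a fixed combinatorial family of shape-regular meshes, or by path arguments as in \eqref{eq_wmaxdiffwmin}. To get constants uniform in $J-j$, I would rather cite the standard broken Poincaré inequality for piecewise $H^1$ functions on $T$ (Brenner's inequality), which is uniform in the fine mesh size. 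Since $h_j^2 = 4^{J-j}h^2$ up to constants, summing over $T$ and multiplying by $\alpha_i$ gives the desired bound by the face terms of $a_{h,0}$.

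\textbf{Cut coarse elements.} This is the main obstacle. On a cut coarse element $T$, the active fine subdomain $T\cap\Omega_{h_J,i}$ can be a thin sliver. A Poincaré inequality on it is not uniform, and the average could even be taken over very few fine elements. My first remedy is to bound the error on a larger, well-shaped region and restrict. Choose a patch $\omega_T$ of coarse elements around $T$, containing an interior coarse element $T^{\sigma_i}$; this is possible by Assumption \ref{as_neighbour} applied at level $j$. Take the averaging set to be $\omega_T \cap \Omega_{h_J,i}$, whose fine part contains a fat subregion of fixed size $\sim h_j$. Then apply the broken Poincaré inequality on the union of fine elements of $\mT_{h_J,i}$ in $\omega_T$. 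That domain is connected through fine faces in $\MEhiI$, and its shape is controlled because $\Gamma$ is $C^2$ and $h_j$ is small. Failing a clean geometric argument, I would follow the chain argument of Lemma \ref{le_L2DGnorm}: connect each fine element in $T\cap\Omega_{h_J,i}$ to $T^{\sigma_i}$ through $O(2^{J-j})$ fine faces. Cauchy–Schwarz along the chain produces a factor equal to the chain length times $h^d$. After counting how many chains pass through each face, this yields the overall factor $4^{J-j}h^2$.

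\textbf{Interface terms.} Note that the interface term of $a_{h,0}$ is not even needed: the components $v_h^{\pi_0}$ and $v_h^{\pi_1}$ are approximated separately. Summing over $i$ and over the finitely overlapping patches finishes \eqref{eq_weakerapp}. The constant is independent of $\alpha$ because each subdomain carries its own weight $\alpha_i$ on both sides.
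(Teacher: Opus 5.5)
\textbf{Verdict: there is a genuine gap.} The cut coarse elements are where the content of the lemma lies, namely uniformity in how $\Gamma$ cuts the mesh, and neither of your two routes for them is carried through. Your treatment of uncut coarse elements is fine: the scaled broken Poincar\'e inequality of Brenner on a coarse simplex $T$ is uniform in $J-j$, and summing over $T$ produces exactly the face terms of $a_{h,0}$.

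\textbf{First route.} The uniform broken Poincar\'e inequality on the active fine elements in $\omega_T$ is only asserted. Its constant depends on the shape of $\omega_T\cap\Omega_{h_J,i}$. A union of coarse simplices is in general non-convex, for instance a vertex patch with a reflex boundary vertex. A nearly flat interface passing just beyond such a vertex cuts the patch into a disconnected or pinched set, and there an inequality using only jumps on faces inside $\omega_T$ is false. Moreover, ``$h_j$ small'' is not available. The constant must be uniform for all $1\le j\le J$, including the coarse levels, and Assumption \ref{as_neighbour} is posed only on the finest mesh.

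\textbf{Fallback chain argument.} Here the face counting is the crux, and it is not done. Suppose, as written, every chain ends in $T^{\sigma_i}$ at a common element. Then the faces next to that element are crossed by $\sim 2^{d(J-j)}$ chains of length $\sim 2^{J-j}$. Cauchy--Schwarz then gives only $C\,2^{(d+1)(J-j)}h^2\|v_h\|_{\mA}^2$, which is off by $2^{(d-1)(J-j)}$ for $d\ge 2$. Reaching $4^{J-j}h^2$ requires two things:
\begin{enumerate}
\item spreading the endpoints, either by averaging over pairs joined by discrete straight segments or by using a non-overlapping family of chains;
\item proving that these chains stay inside $\mT_{h_J,i}$ near the curved interface.
\end{enumerate}
The second point is the same geometric control you have not established.

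\textbf{What the paper does.} The paper removes all cut-cell geometry with one idea your proposal lacks: pass through a conforming function and a global extension.
\begin{enumerate}
\item The Karakashian--Pascal enriching operator gives $v_{h,i}\in H^1(\Omega_{h_J,i})$ with $\alpha_i\|v_h^{\pi_i}-v_{h,i}\|_{L^2(\Omega_{h_J,i})}^2\le Ch^2\|v_h\|_{\mA}^2$ and $\alpha_i|v_{h,i}|_{H^1(\Omega_{h_J,i})}^2\le C\|v_h\|_{\mA}^2$. This construction is local and insensitive to the cut.
\item The restriction $v_{h,i}|_{\Omega_i}$ is extended via \eqref{eq_Sobolev} on the fixed $C^2$ domain $\Omega_i$ to $v_i\in H^1(\Omega)$.
\item $w_{h_j,i}$ is taken as the elementwise mean of $v_i$ on each coarse element. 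Each coarse element is a full simplex on which $v_i$ is defined, whether it is cut or not, so $\|v_i-w_{h_j,i}\|_{L^2(T)}\le Ch_j|v_i|_{H^1(T)}$ holds on every $T$.
\end{enumerate}
The interface geometry is thereby absorbed once into the extension constant, uniformly in $j$.

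\textbf{Your remark on the interface terms.} Your observation that the interface terms of $a_{h,0}$ are not needed is worth keeping. It survives in this route if you extend $v_{h,i}-c_i$, where $c_i$ is the mean of $v_{h,i}$ on each component of $\Omega_i$, and then use Poincar\'e there. The paper instead controls the full $H^1$ norm through Lemma \ref{le_L2DGnorm}. Subtracting the mean also keeps the constant cleanly independent of $\alpha$.
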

\begin{proof}
  By \cite[Theorem 2.1]{Karakashian2007convergence}, for $v_h^{\pi_i}
  \in V_{h_J, i}$, there exists a function $v_{h, i} \in
  H^1(\Omega_{h_J, i})$ such that
  \begin{displaymath}
    \sum_{K \in \MThi} \alpha_i h_K^{2q-2} | v_h^{\pi_i} - v_{h,
    i}|_{H^q(K)}^2
    \leq C \sum_{e \in \MEhiI} \alpha_i h_e^{-1} \| \jump{
    v_h^{\pi_i}} \|_{L^2(e)}^2 \leq C \DGsnorm{v_h}^2, \quad \forall q
    = 0, 1.
  \end{displaymath}
  Since $v_h^{\pi_i}$ is piecewise constant on $\MTh$, it follows 
  that $\alpha_i | v_{h, i} |_{H^1(\Omega_i)}^2 \leq \alpha_i | v_{h,
  i}|_{H^1(\Omega_{h_J, i})}^2 \leq C \| v_h \|_{\mA}^2$. 
  From Lemma \ref{le_L2DGnorm}, there holds $ \alpha_i \| v_{h, i}
  \|_{L^2(\Omega_i)}^2 \leq C \| v_h \|_{\mA}^2$. 
  Let $v_i$ be the Sobolev extension of $v_{h, i}|_{\Omega_i}$ to
  $\Omega$, which satisfy that
  $v_i|_{\Omega_i} = v_{h, i}$ and $ \alpha_i \| v_i
  \|_{H^1(\Omega)}^2 \leq C \alpha_i \| v_{h, i} \|_{H^1(\Omega_i)}^2
  \leq C \| v_h \|_{\mA}^2$.
  Let $w_{h_j, i}$ be the $L^2$ interpolant of $v_i$
  into the space 
  $V_{h_j, i}$ such that $\| w_{h_j, i} - v_i\|_{L^2(\Omega_{h_j,
  i})} \leq C h_j \| v_i \|_{H^1(\Omega)}$.
  We derive that
  \begin{align*}
    \anorm{v_h - w_{h_j}}^2 & = \sum_{i = 0, 1} \alpha_i  \| v_h^{\pi_i}
    - w_{h_j, i} \|_{L^2(\Omega_i)}^2   
     \leq  C \sum_{i = 0, 1} \alpha_i (\| v_i - w_{h_j, i}
    \|_{L^2(\Omega_i)}^2 +  \| v_i - v_h^{\pi_i} \|_{L^2(\Omega_i)}^2 )
    \leq C  h_j^2 \| v_h \|_{\mA}^2,
  \end{align*}
  which leads to \eqref{eq_weakerapp} and completes the proof.
\end{proof}
Next, we demonstrate that the selection \eqref{eq_lambdal} is
suitable.
\begin{lemma}
  Let $\lambda_j$ be taken as \eqref{eq_lambdal}, there holds 
  \begin{equation}
    \rho(\mA_{j}) \leq \rho(S_{j+1}^T \mA_{j+1} S_{j+1}) < 4^{j
    - J} \lambda = \lambda_j, \quad 1 \leq j \leq J - 1. 
    \label{eq_Ajupper} 
  \end{equation}
  \label{le_Ajupper}
\end{lemma}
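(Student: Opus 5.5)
The plan is a downward induction on $j$, starting from $j = J-1$. The induction hypothesis is $\rho(\mA_{j+1}) < \lambda_{j+1}$. For $j+1 = J$ it holds because $\mA_J = \mA$ and $\lambda_J = \lambda > \rho(\mA)$ by \eqref{eq_lambdal}. Throughout, every operator on level $j$ is regarded as acting on $V_{h_j}$ with the inner product $\ainnerprod{\cdot}{\cdot}$. Since $\mA$ is symmetric positive semidefinite in this inner product (it comes from $a_{h,0}$), the recursion \eqref{eq_Al} shows that each $\mA_j$ and each $S_j$ is self-adjoint, and each $\mA_j$ is positive semidefinite. Hence $\rho(\cdot)$ coincides with the Rayleigh-quotient maximum.

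\textbf{First inequality.} Write $\mA_j = I_{j,j+1}^T (S_{j+1}^T \mA_{j+1} S_{j+1}) I_{j,j+1}$. The key observation is that $I_{j,j+1}$ is an isometry in $\anorm{\cdot}$. Indeed, $I_{j,j+1}^i v$ is just the restriction of $v$ to $\Omega_{h_{j+1},i}$. Since $\Omega_i \subset \Omega_{h_{j+1},i} \subset \Omega_{h_j,i}$ and $\anorm{\cdot}$ only integrates over $\Omega_0$ and $\Omega_1$, we get $\anorm{I_{j,j+1} v} = \anorm{v}$. This is also a genuine norm on $V_{h_j}$, because every element of $\mT_{h_j,i}$ meets $\Omega_i$ in a set of positive measure. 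Therefore, for any $v \in V_{h_j}$,
\begin{displaymath}
  \ainnerprod{\mA_j v}{v} = \ainnerprod{S_{j+1}^T \mA_{j+1} S_{j+1} I_{j,j+1} v}{I_{j,j+1} v} \leq \rho(S_{j+1}^T \mA_{j+1} S_{j+1}) \anorm{I_{j,j+1} v}^2 = \rho(S_{j+1}^T \mA_{j+1} S_{j+1}) \anorm{v}^2 .
\end{displaymath}
Taking the supremum over $v$ gives $\rho(\mA_j) \leq \rho(S_{j+1}^T \mA_{j+1} S_{j+1})$.

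\textbf{Second inequality.} Because $S_{j+1} = I - \lambda_{j+1}^{-1}\mA_{j+1}$ is a polynomial in the self-adjoint operator $\mA_{j+1}$, we have
\begin{displaymath}
  S_{j+1}^T \mA_{j+1} S_{j+1} = \mA_{j+1}\,(I - \lambda_{j+1}^{-1}\mA_{j+1})^2 .
\end{displaymath}
Its eigenvalues are therefore $f(t) = t(1 - t/\lambda_{j+1})^2$, where $t$ ranges over $\sigma(\mA_{j+1})$. By the induction hypothesis and semidefiniteness, $\sigma(\mA_{j+1}) \subset [0, \lambda_{j+1})$. On $[0,\lambda_{j+1}]$ the function $f$ is nonnegative and attains its maximum at $t = \lambda_{j+1}/3$, where
\begin{displaymath}
  f(\lambda_{j+1}/3) = \tfrac{4}{27}\lambda_{j+1} < \tfrac14 \lambda_{j+1} = 4^{j-J}\lambda = \lambda_j .
\end{displaymath}
Hence $\rho(S_{j+1}^T \mA_{j+1} S_{j+1}) < \lambda_j$. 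Combined with the first inequality, this gives $\rho(\mA_j) < \lambda_j$, which closes the induction and proves \eqref{eq_Ajupper} for every $1 \leq j \leq J-1$.

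\textbf{Main obstacle.} The only delicate point is the first inequality, namely justifying that the prolongation does not increase the $\anorm{\cdot}$-norm on the unfitted nested spaces. This relies on the inclusion $\Omega_i \subset \Omega_{h_{j+1},i}$, so that restricting to the smaller active domain loses no mass inside $\Omega_i$. It also relies on the transpose in \eqref{eq_Al} being taken in the same $\ainnerprod{\cdot}{\cdot}$ inner product on each level, which is what the paper asserts with $(I_{k,j}^i)^T = I_{j,k}^i$. Once this is settled, the rest is the standard scalar estimate for $\max_t t(1-t/\lambda)^2$.
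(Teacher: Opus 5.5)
Your proof is correct and follows the paper's argument. Both use a Rayleigh-quotient comparison through the prolongation for the first inequality, then the bound $\max_{t\in[0,1]} t(1-t)^2 = 4/27 < 1/4$ applied to $S_{j+1}^T\mA_{j+1}S_{j+1} = \mA_{j+1}(I-\lambda_{j+1}^{-1}\mA_{j+1})^2$, carried by downward induction from $j=J-1$. Your one addition is an explicit justification that $I_{j,j+1}$ is an $\anorm{\cdot}$-isometry, since $\Omega_i \subset \Omega_{h_{j+1},i}$; the paper uses this tacitly when it replaces $\ainnerprod{v_{h_j}}{v_{h_j}}$ by $\ainnerprod{I_{j,J}v_{h_j}}{I_{j,J}v_{h_j}}$.
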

\begin{proof}
  We prove the estimate \eqref{eq_Ajupper} for the finest level $j = J
  - 1$.
  In this case, $S_{j+1}$ has the form $S_J = I - (\lambda_J)^{-1}\mA_J$. 
  By the definition of $\mA_j = \mA_{J - 1}$, we deduce that 
  \begin{align*}
    \rho(\mA_{J - 1}) & = \max_{v_{h_{j}} \in V_{h_{j}}} \frac{
    \ainnerprod{Q_{j} 
    v_{h_j}}{\mA_J (Q_{j}  v_{h_j})}  }{ \ainnerprod{v_{h_j}}{
    v_{h_j}}} = \max_{v_{h_j} \in V_{h_j} } \frac{ \ainnerprod{ I_{
    j, J} v_{h_j}}{
    (S_J^T \mA_J S_J ) (I_{j, J} v_{h_j})}}{  \ainnerprod{I_{j, J}
    v_{h_j}}{ I_{j, J} v_{h_j}} } \\ 
    & \leq  \max_{v_{h_J} \in V_{h_J}} \frac{ \ainnerprod{v_{h_J}}
    {(S_J^T \mA_J S_J) v_{h_J}}}{  \ainnerprod{v_{h_J}}{
    v_{h_J}} } \leq \rho(S_J^T \mA_J S_J). 
  \end{align*}
  Since $\lambda_J = \lambda > \rho(\mA_J)$, we know that
  $\rho(\mA_J / \lambda_J) < 1$. 
  By the direct calculation, we obtain 
  \begin{displaymath}
    S_J^T \mA_J S_J = (I - (\lambda_J)^{-1}\mA_J)^2 \mA_J,
  \end{displaymath}
  and
  \begin{align*}
    \rho(S_J^T \mA_J S_J) = \lambda_J \max_{ t \in
    \sigma(\lambda_J^{-1} \mA_J)} (1 - t)^2t \leq \lambda_J
    \max_{t \in (0,1)} (1 - t)^2 t  < 4^{-1} \lambda,
  \end{align*}
  which gives the estimate \eqref{eq_Ajupper} for $j = J - 1$ and
  indicates $\rho(\mA_{J - 1}) \leq \lambda_{J - 1}$. 
  The last inequality follows from the fact that the maximum of
  the function $f(t) = (1 - t)^2t$ on $(0, 1)$ is reached
  at $t_0 = 3^{-1}$, and $f(t) \leq f(t_0) < 4^{-1}$ for any $t \in (0,
  1)$.
  By the same procedure, we conclude the estimate
  \eqref{eq_Ajupper} at level $j - 1$ from the result at $j$, which
  completes the proof.
\end{proof}

\begin{algorithm}[t]
  \caption{$\mc{W}$-cycle Multigrid Solver, MGSolverI($\bmr{y}_j$,
  $\bmr{z}_j$, $j$)}
  \label{alg_mgsolver}
  \KwIn{the initial guess $\bmr{y}_j$, the right hand side
  $\bmr{z}_j$, the level $j$;}
  \KwOut{the solution $\bmr{y}_j$;}
  \If{$j = 1$}{
  $\mA_1 \bmr{y}_1 = \bmr{z}_1$ is solved by the direct method. 

  return $\bmr{y}_1$;
  }

  \If{$j > 1$}{
  pre-smoothing: apply Gauss-Seidel sweep on $\mA_j \bmr{y}_j =
  \bmr{z}_j$;  

  correction on coarse mesh: set $\bmr{\xi} = (S_j
  I_{j-1, j})^T(\bmr{z}_j - \mA_j \bmr{y}_j)$;

  let $\bmr{w}_1 = \bmr{0}$, and update $\bmr{w}_1$ by
  MGSolverI($\bmr{w}_1$, $\bmr{\xi}$, $j - 1$);

  set $\bmr{w}_2 = \text{MGSolverI($\bmr{w}_1$, $\bmr{\xi}$, $j - 1$)}$; 

  set $\bmr{y}_j = \bmr{y}_j + S_j I_{j - 1,j} \bmr{w}_2$;

  post-smoothing: apply Gauss-Seidel sweep on $\mA_j \bmr{y}_j =
  \bmr{z}_j$;

  return $\bmr{y}_j$;
  }
\end{algorithm}

Combining the estimate \eqref{eq_Ajupper} and the definition of $S_j$, 
we have that 
\begin{equation}
  \rho(S_j) = \max_{t \in \sigma(\lambda_j^{-1} \mA_j)} (1 - t 
  ) < 1, \quad \sigma(S_j) \subset(0, 1), \quad 1 \leq j \leq J.
  \label{eq_vSj}
\end{equation}
Then, both $S_j$ and $\mA_j$ are symmetric and positive definite.
From $\mA_j$, we define $\| v_{h_j} \|_{\mA_j}^2 :=
\ainnerprod{v_{h_j}}{ \mA_j v_{h_j}}$ for $\forall v_{h_j} \in
V_{h_j}$. 
We further deduce that
\begin{align}
  \anorm{v_{h_j} - S_j v_{h_j} } & = \anorm{\lambda_j^{-1} \mA_j
  v_{h_j} } 
  \leq \lambda_j^{-1/2} \| v_{h_j} \|_{\mA_j} \leq
  \frac{1}{\sqrt{\rho(\mA_j)}} \| v_{h_j} \|_{\mA_j}, \quad \forall
  v_{h_j} \in V_{h_j}. \label{eq_ISj}
\end{align}
Let $\wt{Q}_j := Q_jI_{J,j}$ on the space $V_h$, we have the following
results.
\begin{lemma}
  There exist constants $C_0, C_1$ such that 
  \begin{equation}
    \| \wt{Q}_j v_h \|_{\mA} \leq \wt{C}(j) \| v_h \|_{\mA}, \quad
    \forall v_h \in V_h, \quad 1 \leq j \leq J,
    \label{eq_Qlbound}
  \end{equation}
  where $\wt{C}(j) = C_0 + C_1j$, and there exist constants $C_2, C_3$
  such that 
  \begin{equation}
    \anorm{(\wt{Q}_{j} - \wt{Q}_{j+1}) v_h} \leq
    \frac{\wh{C}(j)}{ \sqrt{\rho(\mA_{j+1})}} \| v_h \|_{\mA}, \quad
    \forall v_h \in V_h, \quad
    1 \leq j \leq J - 1,
    \label{eq_QlQl1diff}
  \end{equation}
  where $\wh{C}(j) = C_2 + C_3j$.
  \label{le_MGconv}
\end{lemma}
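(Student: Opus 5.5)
The plan is to follow the smoothed aggregation framework of \cite{Vanek2001convergence}. I would first prove \eqref{eq_QlQl1diff} and then get \eqref{eq_Qlbound} from it by a telescoping sum. Two structural facts are used throughout.

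First, the spaces are nested, $V_{h_j}\subset V_{h_{j+1}}$, and $I_{j,k}$ is the $\anorm{\cdot}$-orthogonal projection. Hence $I_{J,j}=I_{j+1,j}I_{J,j+1}$, and
\[
  \wt{Q}_j-\wt{Q}_{j+1}=Q_{j+1}\big(S_{j+1}P_j-I\big)I_{J,j+1},
  \qquad P_j:=I_{j,j+1}I_{j+1,j}.
\]
Here $P_j$ is the $\anorm{\cdot}$-projection of $V_{h_{j+1}}$ onto $V_{h_j}$.

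Second, $Q_{j+1}$ is nonexpansive in $\anorm{\cdot}$. By \eqref{eq_vSj} each $S_k$ is symmetric with spectrum in $(0,1)$, and each prolongation $I_{k,k+1}$ is a restriction of a coarse piecewise constant to a smaller active domain, so $\anorm{I_{k,k+1}y}\le\anorm{y}$. In the energy norm, by the definition \eqref{eq_Al}, $\|Q_{j+1}y\|_{\mA}=\|y\|_{\mA_{j+1}}\le\sqrt{\rho(\mA_{j+1})}\,\anorm{y}$.

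For \eqref{eq_QlQl1diff}, set $w:=I_{J,j+1}v_h$ and split
\[
  (S_{j+1}P_j-I)w=(S_{j+1}-I)P_jw+(P_j-I)w .
\]
\begin{itemize}
  \item For the first term, \eqref{eq_ISj} gives $\anorm{(S_{j+1}-I)P_jw}\le\rho(\mA_{j+1})^{-1/2}\|P_jw\|_{\mA_{j+1}}$. It then remains to bound $\|P_jw\|_{\mA_{j+1}}$ by $\|v_h\|_{\mA}$ up to a factor linear in $j$. I would do this by an induction coupled with \eqref{eq_Qlbound} at the finer level: write $\|P_jw\|_{\mA_{j+1}}=\|Q_{j+1}I_{j,j+1}I_{J,j}v_h\|_{\mA}$, which is the energy of the smoothed coarse interpolant.
  \item For the second term, $\anorm{(P_j-I)w}\le\anorm{v_h-I_{J,j}v_h}+\anorm{v_h-I_{J,j+1}v_h}$. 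Lemma \ref{le_weakerapp} bounds this by $C\,2^{J-j}h\|v_h\|_{\mA}$.
\end{itemize}
To convert $2^{J-j}h$ into $\rho(\mA_{j+1})^{-1/2}$, I need $\rho(\mA_{j+1})\le\lambda_{j+1}=4^{j+1-J}\lambda$ (Lemma \ref{le_Ajupper}) together with $\lambda=O(h^{-2})$ (Theorem \ref{th_kappaA}). I also need a matching lower bound $\rho(\mA_{j+1})\ge c\,\lambda_{j+1}$, which I would either prove or adopt as the standard Vanek hypothesis that $\lambda_j$ is comparable to $\rho(\mA_j)$.

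For \eqref{eq_Qlbound}, I would telescope from the finest level, where $\wt{Q}_J=I$:
\[
  \|\wt{Q}_jv_h\|_{\mA}\le\|v_h\|_{\mA}+\sum_{k=j}^{J-1}\|(\wt{Q}_k-\wt{Q}_{k+1})v_h\|_{\mA}.
\]
Each difference lies in the range of $Q_{k+1}$, so the inverse-type bound gives
\[
  \|(\wt{Q}_k-\wt{Q}_{k+1})v_h\|_{\mA}\le\sqrt{\rho(\mA_{k+1})}\,\anorm{(S_{k+1}P_k-I)I_{J,k+1}v_h}.
\]
This cancels the factor $\rho(\mA_{k+1})^{-1/2}$ from \eqref{eq_QlQl1diff}, so each summand is $O(1)\,\|v_h\|_{\mA}$. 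Summing gives growth linear in the number of levels, i.e. a constant of the form $C_0+C_1j$ after relabeling, consistent with the statement up to the $j$ versus $J-j$ indexing.

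The main obstacle is the circular dependence in the first term: bounding $\|P_jw\|_{\mA_{j+1}}$ needs energy stability at level $j+1$, while the energy stability itself is derived from the difference bounds. I would resolve this by a downward induction on $j$ that proves both estimates simultaneously, tracking constants so that they grow only additively, which is what produces the linear $\wt{C}(j)$ and $\wh{C}(j)$. The other delicate point is the lower bound on $\rho(\mA_{j+1})$ needed to match $2^{J-j}h$. If it cannot be proved directly, I would use \eqref{eq_ISj} with $\lambda_{j+1}^{-1/2}$ in place of $\rho(\mA_{j+1})^{-1/2}$ and state the estimate in that form.
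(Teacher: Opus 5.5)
Your argument for \eqref{eq_QlQl1diff} is fine once energy stability at level $j+1$ is available. The gap is in deriving \eqref{eq_Qlbound} by telescoping the differences: this cannot give linear growth. In your own derivation, $\wh{C}(k)$ carries the term $\|P_kw\|_{\mA_{k+1}}$. Since $(I-P_k)w=I_{J,k+1}(v_h-I_{J,k}v_h)$ has energy $O(\|v_h\|_{\mA})$, this term differs from $\|\wt{Q}_{k+1}v_h\|_{\mA}$ only by $O(\|v_h\|_{\mA})$. This is unavoidable, because the difference $(\wt{Q}_k-\wt{Q}_{k+1})v_h$ contains $Q_{k+1}(S_{k+1}-I)P_kw$. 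Its energy $\|\lambda_{k+1}^{-1}\mA_{k+1}P_kw\|_{\mA_{k+1}}$ is in general controlled only by $\|P_kw\|_{\mA_{k+1}}$ itself, since coarse prolongations still carry high-frequency content at level $k+1$.

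So after multiplying by $\sqrt{\rho(\mA_{k+1})}$ the summands are not $O(1)\|v_h\|_{\mA}$. Your proposal is also inconsistent on this point, since the first bullet already makes $\wh{C}$ grow with $j$. You get $\wh{C}(k)\approx\wt{C}(k+1)+C$, and the recursion $\wt{C}(k)\le\wt{C}(k+1)+\wh{C}(k)\le 2\wt{C}(k+1)+C$ grows like $2^{J-j}\sim h_j/h$. That is useless for the multigrid convergence theorem. The downward induction with additive constants that you announce cannot be carried out with this splitting.

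The missing idea is to estimate $\|\wt{Q}_jv_h\|_{\mA}=\|S_{j+1}P_jw\|_{\mA_{j+1}}$ directly, inserting $S_{j+1}w$ rather than $w$:
\[
\|S_{j+1}P_jw\|_{\mA_{j+1}}\le\|S_{j+1}w\|_{\mA_{j+1}}+\|S_{j+1}(P_j-I)w\|_{\mA_{j+1}}.
\]
Because $S_{j+1}$ is a polynomial in $\mA_{j+1}$ with spectrum in $(0,1)$, the first term is at most $\|w\|_{\mA_{j+1}}=\|\wt{Q}_{j+1}v_h\|_{\mA}$, with constant exactly one. The second term is at most $\sqrt{\rho(S_{j+1}^T\mA_{j+1}S_{j+1})}\,\anorm{v_h-I_{J,j}v_h}\le C\,2^{j-J}h^{-1}\cdot 2^{J-j}h\,\|v_h\|_{\mA}$, by Lemma \ref{le_Ajupper} and Lemma \ref{le_weakerapp}. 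This yields $\wt{C}(j)\le\wt{C}(j+1)+C$, hence growth linear in $J-j$ (as you correctly note about the indexing), with no circularity. Then \eqref{eq_QlQl1diff} follows from your splitting, or from $S_{j+1}(P_j-I)w+(I-S_{j+1})w$, with $\wh{C}(j)\lesssim\wt{C}(j+1)$.

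Finally, you never need a lower bound $\rho(\mA_{j+1})\gtrsim\lambda_{j+1}$. Both $\lambda_{j+1}^{-1/2}\le\rho(\mA_{j+1})^{-1/2}$ and $2^{J-j}h\lesssim\rho(\mA_{j+1})^{-1/2}$ use only the upper bound $\rho(\mA_{j+1})<\lambda_{j+1}=4^{j+1-J}\lambda$ with $\lambda\lesssim h^{-2}$.
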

\begin{proof}
For the first estimate \eqref{eq_Qlbound}, we deduce that
\begin{align}
  \| \wt{Q}_{j} v_h \|_{\mA} & = \| Q_{j} I_{J,j} v_h \|_{\mA} = \|
  Q_{j+1} S_{j+1} I_{j,j+1} I_{J,j} v_h \|_{\mA} = \| S_{j+1}
  I_{j,j+1} 
  I_{J,j} v_h \|_{\mA_{j+1}} \label{eq_wtQjvh} \\
  & \leq \| S_{j+1} I_{J,j+1} v_h\|_{\mA_{j+1}} + \| S_{j+1}
  (I_{J,j+1} - I_{j,j+1} I_{J,j}) v_h \|_{\mA_{j+1}}. \nonumber
\end{align}
By \eqref{eq_vSj}, we have that $  \| S_{j+1} I_{J,j+1}
v_h\|_{\mA_{j+1}} \leq \| I_{J,j+1} v_h \|_{\mA_{j+1}} = \|
\wt{Q}_{j+1} v_h \|_{\mA}$.
From the weak approximation property \eqref{eq_weakerapp} and
\eqref{eq_Ajupper}, it can be seen that
\begin{align}
  \| S_{j+1}(I_{J,j+1} - &I_{j,j+1} I_{J,j}) v_h \|_{\mA_{j+1}}^2
  = \ainnerprod{(I_{J,j+1} - I_{j,j+1} I_{J,j}) v_h }{ S_{j+1}^T \mA_{j+1}
  S_{j+1} (I_{J,j+1} - I_{j,j+1} I_{J,j}) v_h }
  \nonumber \\
  & \leq 4^{j - J} \lambda \anorm{ I_{J,j+1}(v_h - I_{J,j} v_h)
   }^2
  \leq  4^{j - J} \lambda \anorm{v_h - I_{J,j} v_h}^2
  \leq C \| v_h \|_{\mA}^2.\label{eq_Sj1}
\end{align}
Combining \eqref{eq_wtQjvh} - \eqref{eq_Sj1} directly leads to the
desired estimate \eqref{eq_Qlbound}.

For the second estimate \eqref{eq_QlQl1diff}, we have that
\begin{align*}
  \anorm{(\wt{Q}_{j} - \wt{Q}_{j+1}) v_h } & = \anorm{Q_{j+1}
  (S_{j+1} I_{j,j+1} I_{J,j} - I_{J,j+1}) v_h }
  \leq \anorm{(S_{j+1} I_{j,j+1} I_{J,j} - I_{J,j+1}) v_h}\\
  & \leq \anorm{
  S_{j+1} (I_{J,j+1} I_{J,j} - I_{J,j+1}) v_h
  } + \anorm{  (I_{J,j+1}  - S_{j+1} I_{J,j+1} ) v_h
  }.
\end{align*}
The first term can be bounded by the weak approximation property
\eqref{eq_weakerapp} and \eqref{eq_vSj}, i.e. 
\begin{align*}
\anorm{S_{j+1} (I_{j,j+1} I_{J,j} -& I_{J,j+1}) v_h } 
\leq \anorm{ I_{j,j+1} (I_{J,j} v_h -  v_h )} \leq
\anorm{v_h - I_{J,j} v_h } \leq \frac{C}{
\sqrt{\rho(\mA_{j+1})} } \| v_h \|_{\mA}.
\end{align*}
The second term can be estimated by \eqref{eq_ISj}, which reads
\begin{align*}
  \anorm{I_{J,j+1} v_h - S_{j+1} I_{J,j+1} v_h} & \leq
  \frac{C}{\sqrt{\rho(\mA_{j+1})}} \| I_{J,j+1} v_h \|_{\mA_{j+1}} =
  \frac{C}{\sqrt{\rho(\mA_{j+1})}} \| Q_{j+1} I_{J,j+1} v_h
  \|_{\mA}
  \\
  & =  \frac{C}{\sqrt{\rho(\mA_{j+1})}} \| \wt{Q}_{j+1} v_h
  \|_{\mA}
  \leq  \frac{C \wt{C}(j)}{\sqrt{\rho(\mA_{j+1})}} \|  v_h \|_{\mA}.
\end{align*}
The above two estimates lead to the estimate \eqref{eq_QlQl1diff},
which completes the proof.
\end{proof}
From \cite[Theorem 3.5]{Vanek2001convergence}, Lemma \ref{le_MGconv}
gives the convergence rate of Algorithm~\ref{alg_mgsolver}.
\begin{theorem}
  For the linear system $A_{0} \bmr{y} = \bmr{z}$, there holds
  \begin{equation}
    \| \bmr{y} - \mathrm{MG}(\wt{\bmr{y}}, \bmr{z})\|_{\mA} \leq ( 1-
    \frac{1}{C(J)})
    \| \bmr{y} - \wt{\bmr{y}} \|_{\mA}, \quad C(J) = O(J^3).
    \label{eq_MGconv}
  \end{equation}
  \label{th_MGconv}
\end{theorem}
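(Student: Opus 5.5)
The plan is to reduce Theorem \ref{th_MGconv} to the abstract smoothed aggregation convergence theorem \cite[Theorem 3.5]{Vanek2001convergence}. We would identify its hypotheses one by one with the results already established, and then track how the constants depend on $J$.

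\emph{Setting.} The abstract framework works with a hierarchy $\mA_j = Q_j^T \mA Q_j$ built from tentative prolongators $I_{j,j+1}$ and prolongator smoothers $S_{j+1}$. In our case the tentative prolongators are the natural injections of piecewise constant spaces on the nested unfitted meshes, and the transposes are taken in $\ainnerprod{\cdot}{\cdot}$. The smoother inside Algorithm~\ref{alg_mgsolver} is Gauss--Seidel. We first verify the standard smoother assumption for it: Gauss--Seidel on the SPD operator $\mA_j$ is convergent in the $\mA_j$-norm. Its symmetrized error operator satisfies a bound of the form $\ainnerprod{(I - M_j^{-1}\mA_j)v}{v}$-type estimates with the scaling $\lambda_j^{-1}$. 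For this we use that $\mA_j$ has bounded stencil, since aggregation on nested meshes keeps the sparsity uniformly bounded, together with Lemma~\ref{le_Ajupper}, which shows $\rho(\mA_j) < \lambda_j$.

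\emph{Matching the hypotheses.} The abstract theorem requires three ingredients:
\begin{enumerate}[(i)]
\item the spectral bounds $\rho(\mA_j) \le \lambda_j = 4^{j-J}\lambda$, given by \eqref{eq_Ajupper};
\item the stability $\|\wt{Q}_j v_h\|_{\mA} \le \wt{C}(j)\|v_h\|_{\mA}$, given by \eqref{eq_Qlbound};
\item the approximation $\anorm{(\wt{Q}_j - \wt{Q}_{j+1})v_h} \le \wh{C}(j)\rho(\mA_{j+1})^{-1/2}\|v_h\|_{\mA}$, given by \eqref{eq_QlQl1diff}.
\end{enumerate}
We write the error propagation of the $\mc{W}$-cycle as $E_J = \prod (\text{smoothing}) (I - \text{coarse correction})$ and apply the telescoping decomposition $v = \sum_j (\wt{Q}_j - \wt{Q}_{j+1})v + \wt{Q}_1 v$. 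This is the variational argument of \cite{Vanek2001convergence}, in which the $\mc{W}$-cycle (two recursive calls) is treated by the usual induction over levels. It yields $\|E_J v\|_{\mA}^2 \le (1 - 1/C(J))\|v\|_{\mA}^2$ with
\begin{displaymath}
  C(J) \lesssim \Big(\sum_{j} \wh{C}(j)^2 + \wt{C}(j)^2\Big)\quad\text{or}\quad C(J) \lesssim \max_j \wh{C}(j)^2\, J .
\end{displaymath}

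\emph{Counting powers of $J$.} Since $\wt{C}(j), \wh{C}(j) = O(j)$, the sum over $j \le J$ of squared constants is $O(J^3)$. This gives $C(J) = O(J^3)$, and hence \eqref{eq_MGconv} after identifying the $\mA$-norm with the energy induced by $A_0$.

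\emph{Main obstacle.} The delicate step is the smoothing property of Gauss--Seidel in the form required by the abstract theorem. Concretely, the bound $\|v\|^2_{\mA_j}$-reduction must be at least $\lambda_j^{-1}\|\mA_j v\|_{\alpha,2}^2$, uniformly in the coefficient contrast. We would handle this by comparing Gauss--Seidel with the damped Richardson iteration $I - \lambda_j^{-1}\mA_j$. The comparison uses the diagonal of $\mA_j$, which is bounded by $\lambda_j$ by Lemma~\ref{le_Ajupper}, together with the bounded number of nonzeros per row. The $\alpha$-weighted inner product absorbs the coefficient jump. If this comparison fails uniformly, the fallback is to replace Gauss--Seidel by Richardson smoothing in the analysis only, keeping the same constant structure.
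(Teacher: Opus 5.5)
Your proposal follows the paper's argument: the paper's proof is just an invocation of \cite[Theorem 3.5]{Vanek2001convergence}, with the spectral bound from Lemma \ref{le_Ajupper} and the stability and approximation hypotheses from Lemma \ref{le_MGconv}, whose linearly growing constants $\wt{C}(j)$ and $\wh{C}(j)$ give $C(J)=O(J^3)$. Your treatment of the Gauss--Seidel smoothing property goes beyond the paper, which does not verify it. Also, since $\wt{Q}_J=I$, the telescoping identity should read $v_h=\wt{Q}_1 v_h+\sum_{j=1}^{J-1}(\wt{Q}_{j+1}-\wt{Q}_j)v_h$ (you have the sign of the differences reversed).
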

As a result, the linear system $A_m \bmr{x} = \bmr{b}$ of
\eqref{eq_dvarproblem} can be solved by the CG method using
Algorithm~\ref{alg_mgsolver} as the preconditioner.

Finally, we present another $\mc{W}$-cycle multigrid algorithm in
Algorithm~\ref{alg_mgsolverII} for the system $A_{0} \bmr{y} =
\bmr{z}$, which is simpler than Algorithm~\ref{alg_mgsolver} and is
closer to the standard geometrical multigrid.
In Algorithm~\ref{alg_mgsolverII}, the smoother $S_j$ is replaced by
the identical operator. 
On each level $j$, we define a bilinear form 
\begin{displaymath}
  \begin{aligned}
    a_{h_j, 0}(v_{h_j}, w_{h_j}) &:= \sum_{e \in \mE_{h_j, 0}^I}
    \frac{\alpha_0}{h_e} \int_e \jump{v_{h_j}^{\pi_0}} \cdot
    \jump{w_{h_j}^{\pi_0}} \dx{s}
    +  \sum_{e \in \mE_{h_j, 1}^I \cup \mE_{h_j}^B }
    \frac{\alpha_1}{h_e} \int_e
    \jump{v_{h_j}^{\pi_1}} \cdot \jump{w_{h_j}^{\pi_1}} \dx{s} \\
    +& \sum_{K \in \mT_{h_j}^{\Gamma}} \frac{\aver{\alpha}_w}{h_K} \int_{\Gamma_K}
    \jump{v_{h_j}} \cdot \jump{w_{h_j}} \dx{s}, \quad \forall v_{h_j},
    w_{h_j} \in V_{h_j},
  \end{aligned}
\end{displaymath}
which is the interior penalty scheme on $\mT_{h_j}$ over piecewise
constant spaces $V_{h_j} \times V_{h_j}$.
Let $A_{0, j}$ be the matrix of $a_{h_j, 0}(\cdot, \cdot)$, and in
Algorithm~\ref{alg_mgsolverII}, we consider the linear system of the
matrix $A_{0, j}$ on each level. 
Numerically, the iterative solver still performs well when
Algorithm~\ref{alg_mgsolverII} is used as the preconditioner.
The convergence study is left for future work.

\begin{algorithm}[t]
  \caption{$\mc{W}$-cycle Multigrid Solver, MGSolverII($\bmr{y}_j$,
  $\bmr{z}_j$, $j$)}
  \label{alg_mgsolverII}
  \KwIn{the initial guess $\bmr{y}_j$, the right hand side
  $\bmr{z}_j$, the level $j$;}
  \KwOut{the solution $\bmr{y}_j$;}
  \If{$j = 1$}{
  $A_{0, 1} \bmr{y}_1 = \bmr{z}_1$ is solved by the direct method. 

  return $\bmr{y}_1$;
  }

  \If{$j > 1$}{
  pre-smoothing: apply Gauss-Seidel sweep on $A_{0, j} \bmr{y}_j =
  \bmr{z}_j$;  

  correction on coarse mesh: set $\bmr{\xi} = 
  I_{j,j-1}(\bmr{z}_j - A_{0, j} \bmr{y}_j)$;

  let $\bmr{w}_1 = \bmr{0}$, and update $\bmr{w}_1$ by
  MGSolverII($\bmr{w}_1$, $\bmr{\xi}$, $j - 1$);

  set $\bmr{w}_2 = \text{MGSolverII($\bmr{w}_1$, $\bmr{\xi}$, $j - 1$)}$; 

  set $\bmr{y}_j = \bmr{y}_j + I_{j - 1, j} \bmr{w}_2$;

  post-smoothing: apply Gauss-Seidel sweep on $A_{0, j} \bmr{y}_j
  = \bmr{z}_j$;

  return $\bmr{y}_j$;
  }
\end{algorithm}


\section{Numerical Results}
\label{sec_numericalresults}
In this section, we carry out a series of numerical tests to
demonstrate the numerical performance of the proposed method and the
efficiency of the preconditioner. 
For all examples, the data functions ${f}$,
${g}$ and the jump functions $a$ and $b$ are derived from the exact
solution. 
The interfaces in tests are described by level set functions. 
In the numerical scheme, the quadrature rules of the
integration on curved domains as 
\begin{displaymath}
  \int_{K^0} f \dx{x}, \quad \int_{K^1} f \dx{x}, \quad
  \int_{\Gamma_K} f \dx{s}, \quad \forall K \in \MThG,
\end{displaymath}
are required. 
We refer to \cite{Cui2019quadratures} for the method
that generates quadrature points and weights for such integrals, and
the subroutines are freely available online.
In two dimensions, the computational domain $\Omega$ is fixed as the
square domain $\Omega = (-1, 1)^2$, and we adopt a family of triangular
meshes with $h = 1/10, \ldots, 1/80$ for all tests, see
Fig.~\ref{fig_domain}.
In three dimensions, the domain $\Omega$ is taken to be the cube
$\Omega = (0, 1)^3$, and we solve the test on tetrahedral meshes with
$h = 1/8, \ldots, 1/64$, see Fig.~\ref{fig_3dinterface}.

\begin{figure}[htp]
  \centering
  \begin{minipage}[t]{0.21\textwidth}
    \centering
    \begin{tikzpicture}[scale=1.25]
      \centering
      \input{./MThex1.tex}
    \end{tikzpicture}
  \end{minipage}
  \hspace{30pt}
  \begin{minipage}[t]{0.21\textwidth}
    \centering
    \begin{tikzpicture}[scale=1.25]
      \centering
      \node at (0, 0) {\tiny $\Omega_0$};
      \node at (0.7, -0.7) {\tiny $\Omega_1$};
      \draw[thick, black] (-1.0, -1.0) rectangle (1.0, 1.0);
      \draw[thick, red] (0, 0)  circle [radius=0.6];
    \end{tikzpicture}
  \end{minipage}
  \hspace{30pt}
 \begin{minipage}[t]{0.21\textwidth}
    \centering
    \begin{tikzpicture}[scale=1.25]
      \centering
      \node at (0, 0) {\tiny $\Omega_0$};
      \node at (0.7, -0.7) {\tiny $\Omega_1$};
      \draw[thick, black] (-1.0, -1.0) rectangle (1.0, 1.0);
      \draw[thick, domain=0:360, red, samples=120] plot (\x:{(0.5 +
      sin(\x*5)/7)*1.00});
    \end{tikzpicture}
  \end{minipage}
  \caption{The unfitted mesh and the interfaces in two dimensions.}
  \label{fig_domain}
\end{figure}

\begin{figure}[htb]
  \centering
  \includegraphics[width=0.15\textwidth]{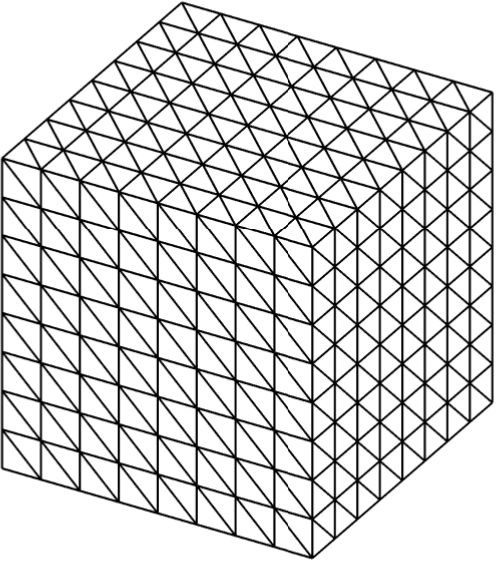}
  \hspace{60pt}
  \includegraphics[width=0.185\textwidth]{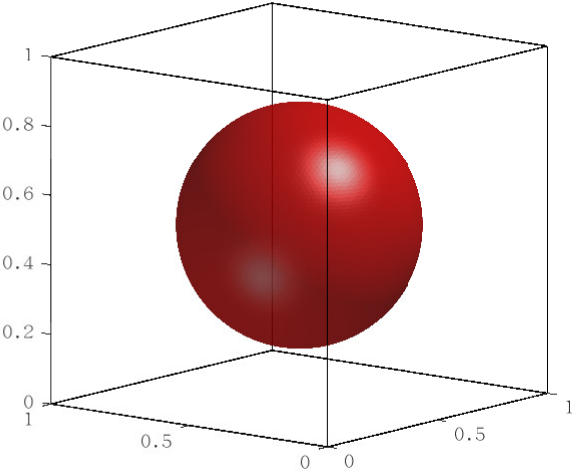}
  \hspace{50pt}
  \includegraphics[width=0.18\textwidth]{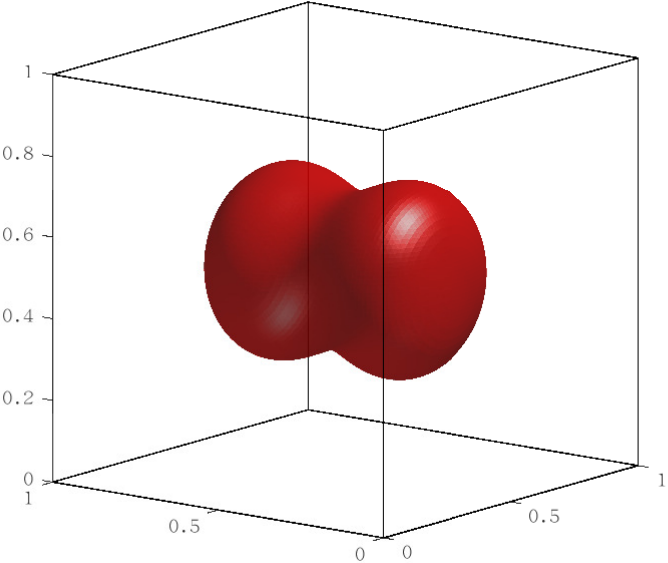}
  \caption{The unfitted mesh and the interfaces in three dimensions.}
  \label{fig_3dinterface}
\end{figure}

\paragraph{\textbf{Example 1.}}
In the first example, we solve an interface problem with a circular
interface centered at the origin with radius $r = 0.6$, see
Fig.~\ref{fig_domain}.
The exact solution and the coefficient are given by
\begin{displaymath}
  \begin{aligned}
    u(x, y) = \begin{cases}
      \sin(\pi x) \sin(\pi y), & \text{in } \Omega_0, \\
      \cos(2\pi x) \cos(4 \pi y), & \text{in } \Omega_1, \\
    \end{cases} \quad \alpha = \begin{cases}
      \alpha_0, & \text{in } \Omega_0, \\
      1, & \text{in } \Omega_1. \\
    \end{cases}
  \end{aligned}
\end{displaymath}
The numerical errors for $\alpha_0 = 10$ are displayed in
Fig.~\ref{fig_ex1err}. 
It can be observed that the errors under both the energy norm and the
$L^2$ norm converge to zero at the optimal rates $O(h^m)$ and
$O(h^{m+1})$, respectively, 
which confirm the
theoretical predictions given in Theorem \ref{th_error}.

\begin{figure}[htbp]
  \centering
  \includegraphics[width=0.32\textwidth]{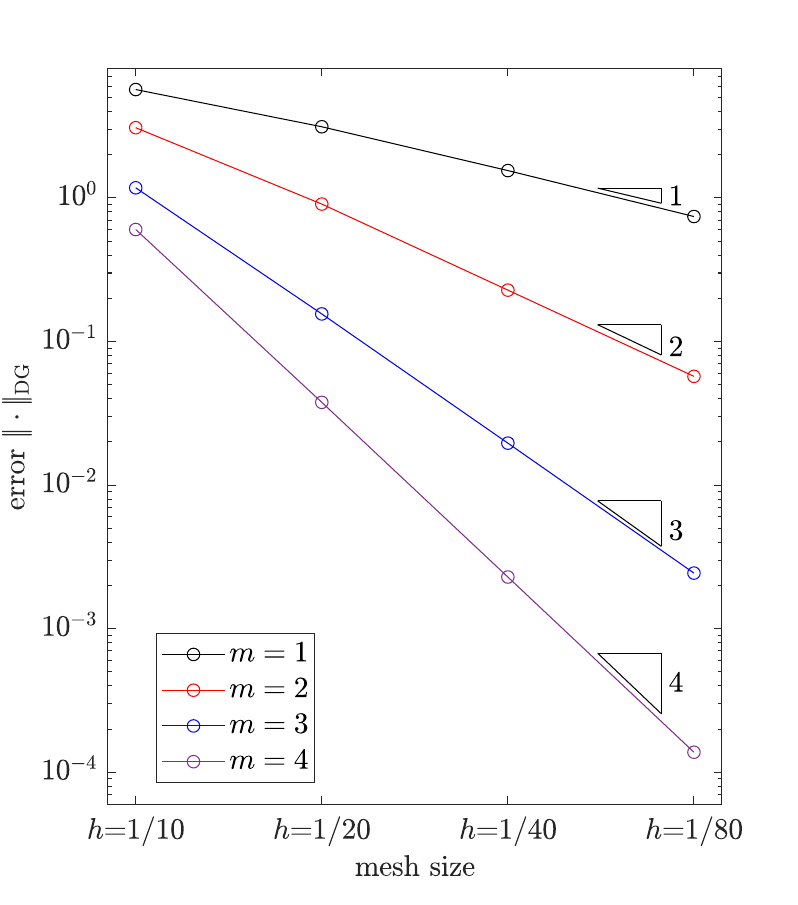}
  \hspace{50pt}
  \includegraphics[width=0.32\textwidth]{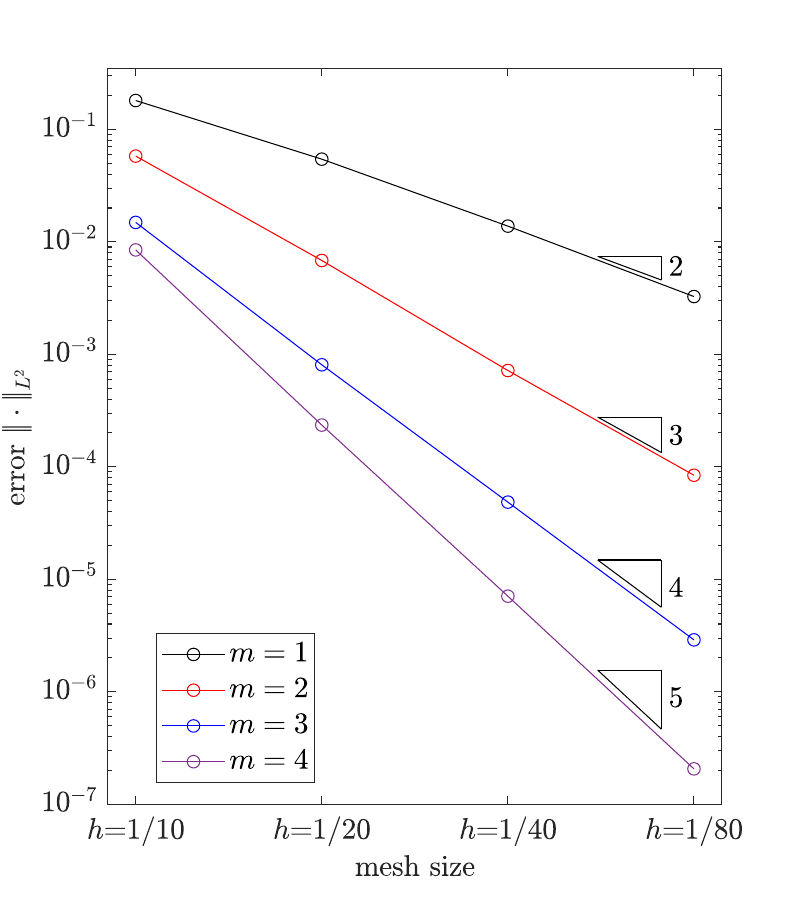}
  \caption{The numerical errors under the energy norm (left)/$L^2$ norm
  (right) in Example 1.}
  \label{fig_ex1err}
\end{figure}

We next focus on the resulting linear system arising from the bilinear
form \eqref{eq_dvarproblem}.
The condition numbers of the linear systems and the preconditioned
systems for all degrees $1 \leq m \leq 4$ are gathered in
Tab.~\ref{tab_cnexample1}.
We find that the condition number of $A_m$ grows like
$O(h^{-2})$ while the condition number of $A_0^{-1} A_m$ increases
only slightly, 
which are consistent with the results in Theorem
\ref{th_kappaA} and Theorem \ref{th_kappaA0Am}.
The iteration counts for standard/preconditioned CG methods are
collected in Tab.~\ref{tab_ex1steps}. 
For both multigrid preconditioning methods, the convergence steps are
observed to remain numerically constant as the unfitted mesh is
refined, demonstrating the effectiveness of our preconditioning
methods.

Furthermore, we show the robustness of the scheme and the
preconditioner for the case involving the coefficient with a large
jump. 
We take $\alpha_0 = 10^0, 10^1, \ldots, 10^4$.
The $L^2$ errors and the iteration counts 
for different coefficients on
the mesh with $h = 1/40$ are summarized in Tab.~\ref{tab_error_alpha}.
Numerical results illustrate that both
the errors and the counts are nearly unchanged 
for all $\alpha_0$, illustrating the robustness of our method.

\begin{table}[htb]
  \centering
  \renewcommand{\arraystretch}{1.28}
  \scalebox{0.78}{
  \begin{tabular}{p{0.5cm}|p{2.0cm}|p{1.8cm}|p{1.8cm}|p{1.8cm}| 
    p{1.8cm}}
    \hline\hline
    \multicolumn{2}{c|}{\diagbox[width=3.2cm, height=0.7cm]{$m$}{$h$}} 
    & 1/10 & 1/20 & 1/40 & 1/80 \\
    \hline
    \multirow{2}{*}{1} & $\kappa(A_{0}^{-1} A_m)$ & 
    7.05 & 7.42 & 8.33 & 8.27 \\ 
    \cline{2-6}
    &$\kappa(A_m)$ & 5.56e+3 & 2.26e+4 & 8.97e+4 & 3.49e+5 \\ 
    \hline
    \multirow{2}{*}{2} & $\kappa(A_{0}^{-1} A_m)$ & 
    27.02 & 30.27 & 31.60 & 38.57 \\ 
    \cline{2-6}
    &$\kappa(A_m)$ & 1.14e+4 & 4.50e+4 & 1.79e+5 & 7.07e+5 \\ 
    \hline
    \multirow{2}{*}{3} & $\kappa(A_{0}^{-1} A_m)$ &
    142.88 & 176.59 & 194.90 & 244.42 \\ 
    \cline{2-6}
    &$\kappa(A_m)$ 
    & 2.45e+4 & 9.27e+4 & 3.92e+5 & 1.56e+6 \\ 
    \hline
    \multirow{2}{*}{4} & $\kappa(A_{0}^{-1} A_m)$ 
    & 303.82 & 392.14 & 408.57 & 458.76 \\ 
    \cline{2-6}
    &$\kappa(A_m)$ & 7.27e+4 & 2.44e+5 & 1.01e+6 & 3.85e+6 \\ 
    \hline \hline
    \end{tabular}
   }
  \caption{The condition numbers of preconditioned/nonpreconditioned
  linear systems in Example 1.}
  \label{tab_cnexample1}
\end{table}

\begin{table}[htb]
  \centering
  \renewcommand{\arraystretch}{1.28}
  \scalebox{0.78}{
  \begin{tabular}{p{0.25cm}|p{3.6cm}|p{1.35cm}|p{1.35cm}|p{1.35cm}|
    p{1.35cm}}
    \hline\hline
    \multirow{2}{*}{$m$} & \diagbox[width=3.9cm, height=0.9cm]{Preconditioner}{$h$}
    & 1/10 & 1/20 & 1/40 & 1/80 \\ \hline
    \multirow{3}{*}{1} 
    & Algorithm \ref{alg_mgsolver} for $A_{0}^{-1}$ 
    & 22 & 22 & 23 & 24 \\ 
    \cline{2-6}
    & Algorithm \ref{alg_mgsolverII}  for $A_{0}^{-1}$ 
    & 22 & 22 & 23 & 23 \\ 
    \cline{2-6}
    & Identity  & 351	& 413	& 758	& 1707 \\ 
    \hline    
    \multirow{3}{*}{2} 
    & Algorithm \ref{alg_mgsolver} for $A_{0}^{-1}$ 
    & 36 & 42	& 47 & 50 \\ 
    \cline{2-6}
    & Algorithm \ref{alg_mgsolverII}  for $A_{0}^{-1}$ 
    & 36 & 41	& 45 & 47 \\ 
    \cline{2-6}
    & Identity & 451	& 856	& 1673	& $>3000$\\ \hline        
    \multirow{3}{*}{3} 
    & Algorithm \ref{alg_mgsolver} for $A_{0}^{-1}$ 
    & 73	& 86	& 88	& 96 \\ 
    \cline{2-6}
    & Algorithm \ref{alg_mgsolverII} for $A_{0}^{-1}$
    & 73	& 86	& 87	& 94	 \\ 
    \cline{2-6}
    & Identity  & 642	& 1199	& 2372	& $>3000$ \\ \hline        
    \multirow{3}{*}{4} 
    & Algorithm \ref{alg_mgsolver} for $A_{0}^{-1}$ 
    & 131	& 138	& 147	& 152 \\ 
    \cline{2-6}
    & Algorithm \ref{alg_mgsolverII} for $A_{0}^{-1}$
    & 131	& 139	& 146	& 149  \\ 
    \cline{2-6}
    & Identity  & 877	& 1661	& $>3000$	& $>3000$ \\ 
    \hline\hline
  \end{tabular}
  }
  \caption{The iteration counts for PCG/CG methods in Example 1.}
  \label{tab_ex1steps}
\end{table}

\begin{table}[htbp]
  \centering
  \renewcommand{\arraystretch}{1.28}
  \scalebox{0.75}{
  \begin{tabular}{c|p{1.3cm}|p{1.3cm}|p{1.3cm}|p{1.3cm}|p{1.3cm}}
    \hline\hline
    \diagbox[width=1.5cm, height=0.7cm]{$m$}{$\alpha_0$} & $10^0$  
    & $10^{1}$ & $10^{2}$ & $10^{3}$ & $10^{4}$\\
    \hline
    1 & 1.32e-2 & 1.30e-2 & 1.22e-2 & 1.18e-2 & 1.18e-2 \\ \hline
    2 & 7.25e-4 & 7.18e-4 & 7.18e-4 & 7.18e-4 & 7.18e-4 \\ \hline 
    3 & 5.26e-5 & 4.85e-5 & 4.85e-5 & 4.86e-5 & 4.86e-5 \\ \hline
    4 & 7.29e-6 & 7.07e-6 & 7.13e-6 & 7.13e-6 & 7.13e-6 \\
    \hline \hline
  \end{tabular}
  \hspace{30pt}
  \begin{tabular}{c|p{1.3cm}|p{1.3cm}|p{1.3cm}|p{1.3cm}|p{1.3cm}}
    \hline\hline
    \diagbox[width=1.5cm, height=0.7cm]{$m$}{$\alpha_0$} & $10^0$  
        & $10^{1}$ & $10^{2}$ & $10^{3}$ & $10^{4}$ \\
    \hline
    1   & 21 & 22 & 22 & 20 & 19 \\ \hline
    2   & 41 & 42 & 43 & 42 & 40 \\ \hline 
    3   & 83 & 86 & 86 & 89 & 87 \\ \hline
    4   & 135& 138& 139& 129& 126\\ \hline \hline
  \end{tabular}
  }
  \caption{The numerical errors(left)/iteration counts of PCG
  solver(right) for different $\alpha_0$ in Example
  1.}
  \label{tab_error_alpha}
\end{table}


\paragraph{\textbf{Example 2.}}
In this example, we test our method by solving an interface problem
with a star-shaped interface consisting of both concave and convex
curve segments, see Fig.~\ref{fig_domain}. 
The interface $\Gamma$ is governed by the level-set function 
\begin{displaymath}
  r = \frac{1}{2} + \frac{\sin 5\theta}{7},
\end{displaymath}
in polar coordinates $(r, \theta)$.
The analytic solution and the coefficient are taken to be the same as
in Example 1.
We depict the convergence histories in Fig.~\ref{fig_ex2err}, 
which again confirm the theoretical results in Theorem \ref{th_error}. 
The condition numbers of $A_m$ and $A_0^{-1} A_m$ are presented in
Tab.~\ref{tab_cnexample2}, and the iteration counts are listed in
Tab.~\ref{tab_ex2steps}. 
As in Example 1, the condition number of the preconditioned system is
nearly constant as the mesh is refined. 
The iteration counts for both preconditioning methods increase
slightly, which agree with the results in 
Theorem \ref{th_kappaA} and Theorem \ref{th_kappaA0Am}.

\begin{figure}[htbp]
  \centering
  \includegraphics[width=0.32\textwidth]{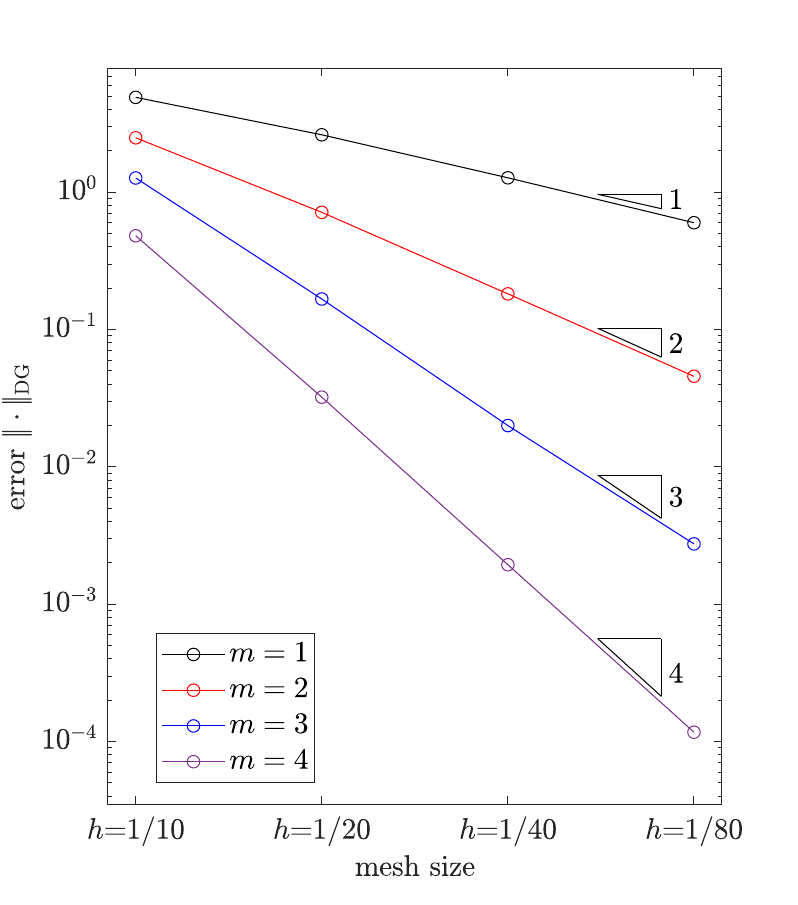}
  \hspace{50pt}
  \includegraphics[width=0.32\textwidth]{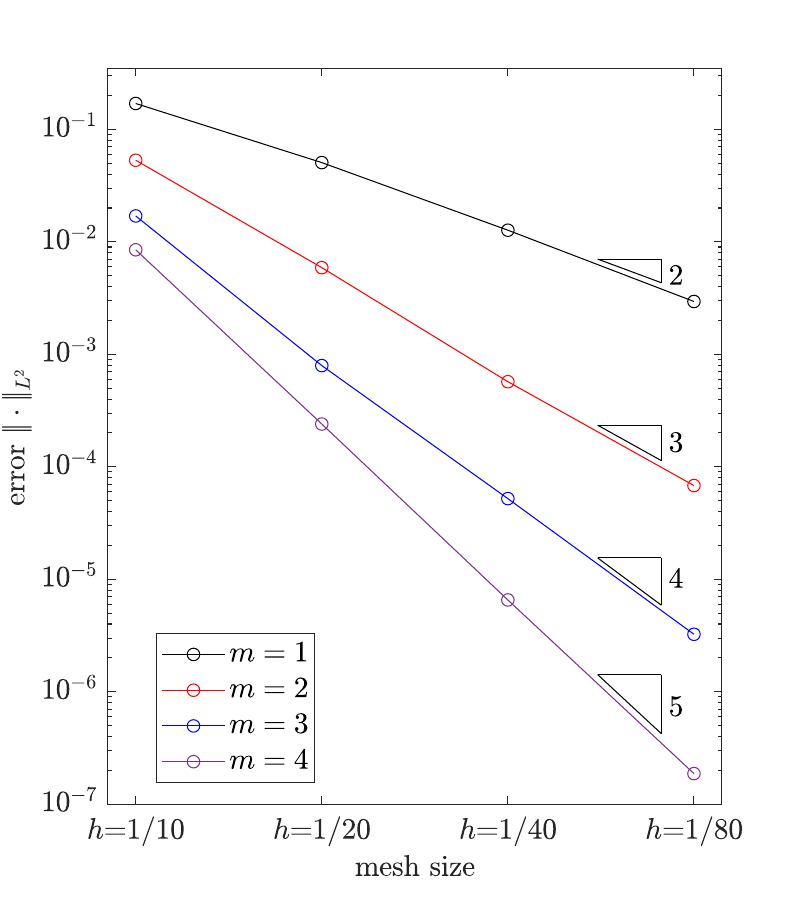}
  \caption{The numerical errors under the energy norm (left)/$L^2$ norm (right) in Example 2.}
  \label{fig_ex2err}
\end{figure}

\begin{table}[htb]
  \centering
  \renewcommand{\arraystretch}{1.28}
  \scalebox{0.78}{
  \begin{tabular}{p{0.5cm}|p{2.0cm}|p{1.8cm}|p{1.8cm}|p{1.8cm}| 
    p{1.8cm}}
    \hline\hline
    \multicolumn{2}{c|}{\diagbox[width=3.2cm, height=0.7cm]{$m$}{$h$}} 
    & 1/10 & 1/20 & 1/40 & 1/80 \\
    \hline
    \multirow{2}{*}{1} & $\kappa(A_{0}^{-1} A_m)$ & 
    6.51 & 6.70 & 7.00 & 7.99 \\ 
    \cline{2-6}
    &$\kappa(A_m)$ & 2.60e+3 & 1.06e+4 & 4.24e+4 & 1.50e+5 \\ 
    \hline
    \multirow{2}{*}{2} & $\kappa(A_{0}^{-1} A_m)$ & 
    16.21 & 30.30 & 31.32 & 38.58 \\ 
    \cline{2-6}
    &$\kappa(A_m)$ & 1.91e+4 & 7.65e+4 & 2.69e+5 & 1.10e+6 \\ 
    \hline
    \multirow{2}{*}{3} & $\kappa(A_{0}^{-1} A_m)$ &
    67.16 & 105.20 & 140.20 & 170.33 \\ 
    \cline{2-6}
    &$\kappa(A_m)$ 
    & 2.76e+4 & 9.28e+4 & 3.55e+5 & 1.60e+6 \\ 
    \hline
    \multirow{2}{*}{4} & $\kappa(A_{0}^{-1} A_m)$ 
    & 225.65 & 260.33 & 388.22 & 439.56\\ 
    \cline{2-6}
    &$\kappa(A_m)$ & 7.35e+4 & 2.70e+5 & 1.01e+6 & 3.81e+6 \\ 
    \hline \hline
    \end{tabular}
   }
  \caption{The condition numbers of preconditioned/nonpreconditioned
  linear systems in Example 2.}
  \label{tab_cnexample2}
\end{table}

\begin{table}[htb]
  \centering
  \renewcommand{\arraystretch}{1.28}
  \scalebox{0.78}{
  \begin{tabular}{p{0.25cm}|p{3.5cm}|p{1.35cm}|p{1.35cm}|p{1.35cm}|
    p{1.35cm}}
    \hline\hline
    \multirow{2}{*}{$m$} & \diagbox[width=3.9cm, height=0.9cm]{Preconditioner}{$h$}
    & 1/10 & 1/20 & 1/40 & 1/80 \\ \hline
    \multirow{3}{*}{1} 
    & Algorithm \ref{alg_mgsolver} for $A_{0}^{-1}$ 
    & 23	& 24	& 23	& 23 \\ 
    \cline{2-6}
    & Algorithm \ref{alg_mgsolverII}  for $A_{0}^{-1}$ 
    & 23	& 23	& 24	& 24 	\\ 
    \cline{2-6}
    & Identity  & 444	& 987	& 1965	& $>3000$ \\ 
    \hline    
    \multirow{3}{*}{2} 
    & Algorithm \ref{alg_mgsolver} for $A_{0}^{-1}$ 
    & 36	& 43	& 46	& 52 \\ 
    \cline{2-6}
    & Algorithm \ref{alg_mgsolverII}  for $A_{0}^{-1}$ 
    & 36	& 43	& 45	& 50 \\ 
    \cline{2-6}
    & Identity  & 509	& 1092	& 2132	& $>3000$ \\ \hline        
    \multirow{3}{*}{3} 
    & Algorithm \ref{alg_mgsolver} for $A_{0}^{-1}$ 
    & 67	& 82	& 90	& 96 \\ 
    \cline{2-6}
    & Algorithm \ref{alg_mgsolverII} for $A_{0}^{-1}$
    & 67	& 82	& 91	& 95 \\ 
    \cline{2-6}
    & Identity  & 627	& 1261	& 2517	& $>3000$ \\ \hline        
    \multirow{3}{*}{4} 
    & Algorithm \ref{alg_mgsolver} for $A_{0}^{-1}$ 
    & 97	& 116	& 141	& 147	 \\ 
    \cline{2-6}
    & Algorithm \ref{alg_mgsolverII} for $A_{0}^{-1}$
    & 97	& 117	& 139	& 144	  \\ 
    \cline{2-6}
    & Identity  & 753	& 1649	& $>3000$ &	$>3000$ \\ 
    \hline\hline
  \end{tabular}
  }
  \caption{The iteration counts for PCG/CG methods in Example 2.}
  \label{tab_ex2steps}
\end{table}

\paragraph{\textbf{Example 3.}}
In this test, we solve a three-dimensional problem and the interface
$\Gamma$ is a sphere centered at $(0.5, 0.5, 0.5)$ with the radius $r
= 0.35$ (see Fig.~\ref{fig_3dinterface}).
The exact solution and the coefficient are selected as
\begin{displaymath}
  u(x, y, z) = \begin{cases}
    \sin(\pi x) \sin(\pi y) \sin(\pi z), & \text{in } \Omega_0, \\
    e^{x + y + z}, & \text{in } \Omega_1, \\
  \end{cases} \quad \alpha = 1 \quad \text{in } \Omega.
\end{displaymath}
The convergence histories are plotted in Fig.~\ref{fig_ex3err}, and
the numerical results demonstrate that the convergence rates under
both error measurements are optimal, which confirms the error
estimation in three dimensions.
The condition numbers and the CG iteration counts for preconditioned
systems and nonpreconditioned systems are collected in
Tab.~\ref{tab_cnexample3} and Tab.~\ref{tab_ex3steps}. 
It can be seen that the condition numbers of the preconditioned linear
system are almost constant, and the CG iteration counts increase very
slightly.
These numerical observations demonstrate the accuracy and the
efficiency of our method in three dimensions.

\begin{figure}[htbp]
  \centering
  \includegraphics[width=0.32\textwidth]{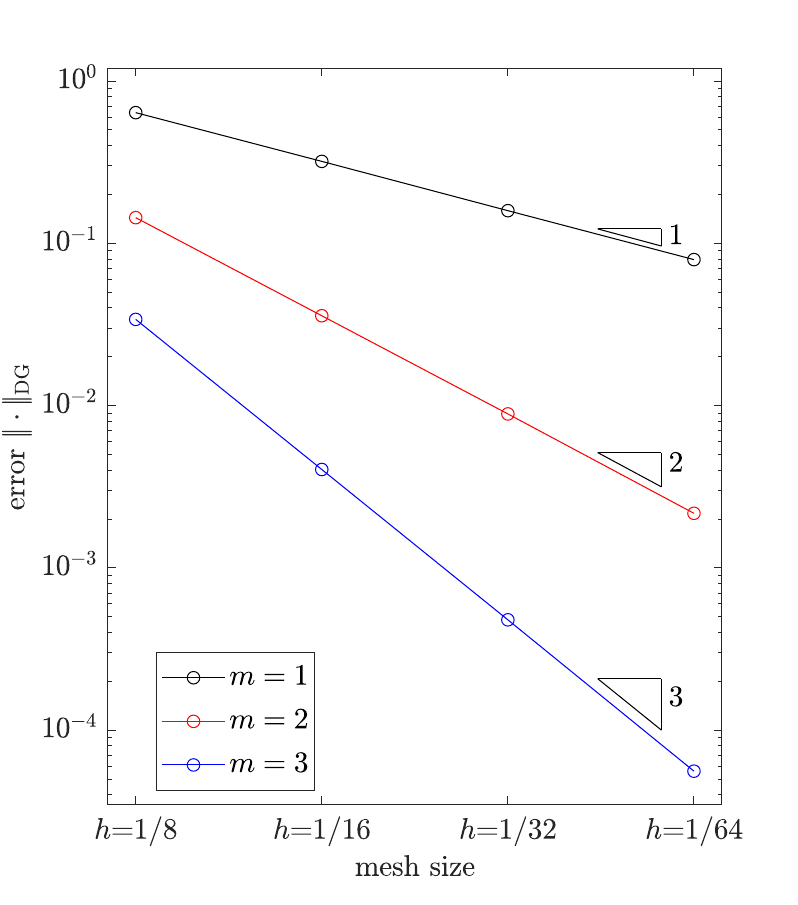}
  \hspace{50pt}
  \includegraphics[width=0.32\textwidth]{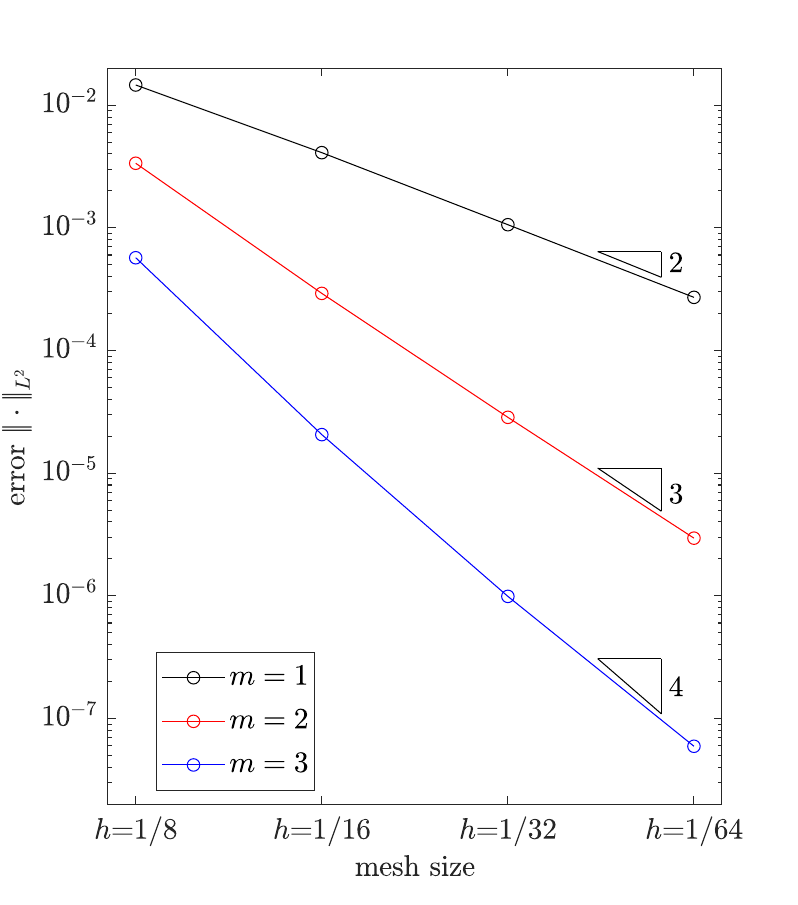}
  \caption{The numerical errors under the energy norm (left)/$L^2$ norm (right) in Example 3.}
  \label{fig_ex3err}
\end{figure}

\begin{table}[htb]
  \centering
  \renewcommand{\arraystretch}{1.28}
  \scalebox{0.78}{
  \begin{tabular}{p{0.5cm}|p{2.0cm}|p{1.8cm}|p{1.8cm}|p{1.8cm}| 
    p{1.8cm}}
        \hline\hline
    \multicolumn{2}{c|}{\diagbox[width=3.2cm, height=0.7cm]{$m$}{$h$}} 
    & 1/8 & 1/16 & 1/32 & 1/64 \\
    \hline
    \multirow{2}{*}{1} & $\kappa(A_{0}^{-1} A_m)$ & 
    21.07 & 31.88 & 37.99 & 41.13 \\ 
    \cline{2-6}
    &$\kappa(A_m)$ & 6.879e+2 & 2.688e+3 & 1.005e+4 & 3.603e+4 \\ 
    \hline
    \multirow{2}{*}{2} & $\kappa(A_{0}^{-1} A_m)$ &
    86.69 & 119.04 & 153.90 & 162.77 \\ 
    \cline{2-6}
    &$\kappa(A_m)$ 
    & 1.472e+3 & 5.556e+3 & 1.944e+4 & 7.399e+4 \\ 
    \hline
    \multirow{2}{*}{3} & $\kappa(A_{0}^{-1} A_m)$ 
    & 421.56 & 665.79 & 707.23 & 778.74 \\ 
    \cline{2-6}
    &$\kappa(A_m)$ & 3.181e+3 & 1.162e+4 & 4.042e+4 & 1.562e+5 \\ 
    \hline \hline
    \end{tabular}
   }
  \caption{The condition numbers of preconditioned/nonpreconditioned
  linear systems in Example 3.}
  \label{tab_cnexample3}
\end{table}

\begin{table}[htb]
  \centering
  \renewcommand{\arraystretch}{1.28}
  \scalebox{0.78}{
  \begin{tabular}{p{0.25cm}|p{3.5cm}|p{1.35cm}|p{1.35cm}|p{1.35cm}|
    p{1.35cm}}
    \hline\hline
    \multirow{2}{*}{$m$} & \diagbox[width=3.9cm, height=0.9cm]{Preconditioner}{$h$}
    & 1/8 & 1/16 & 1/32 & 1/64 \\ \hline
    \multirow{3}{*}{1} 
    & Algorithm \ref{alg_mgsolver} for $A_{0}^{-1}$ 
    & 50	& 58	& 68	& 76 \\ 
    \cline{2-6}
    & Algorithm \ref{alg_mgsolverII}  for $A_{0}^{-1}$ 
    &  50	& 57	& 64	& 70 \\ 
    \cline{2-6}
    & Identity  & 159 & 787 & 1357 & 2674\\ \hline        
    \multirow{3}{*}{2} 
    & Algorithm \ref{alg_mgsolver} for $A_{0}^{-1}$ 
    & 90	& 112	& 125	& 119 \\ 
    \cline{2-6}
    & Algorithm \ref{alg_mgsolverII} for $A_{0}^{-1}$
    & 90	& 110	& 121	& 114 \\ 
    \cline{2-6}
    & Identity 
    & 109 & 1005 & 2052 & $>3000$ \\ \hline        
    \multirow{3}{*}{3} 
    & Algorithm \ref{alg_mgsolver} for $A_{0}^{-1}$ 
    & 172	& 201	& 226	& 235 \\ 
    \cline{2-6}
    & Algorithm \ref{alg_mgsolverII} for $A_{0}^{-1}$
    & 172	& 198	& 221	& 224 \\ 
    \cline{2-6}
    & Identity
    & 118 & 1677 & $>3000$ & $>3000$ \\ 
    \hline\hline
  \end{tabular}
  }
  \caption{The iteration counts for PCG/CG methods in Example 3.}
  \label{tab_ex3steps}
\end{table}

\paragraph{\textbf{Example 4.}}
In this example, another three-dimensional elliptic interface problem
is considered, where the interface is given by the following level set
function (see Fig.~\ref{fig_3dinterface}), 
\begin{displaymath}
  \phi(x, y, z) = \left( (2.5(x - 0.5))^2 + (4.2(y - 0.5))^2 + (2.5(z
  - 0.5))^2 + 0.9\right)^2 - 64(y - 0.5)^2 - 1.3.
\end{displaymath}
The analytic solution and the coefficient are taken to be the same as
in Example 3. 
The numerical results under the energy norm and the $L^2$ norm are
depicted in Fig.~\ref{fig_ex4err}. 
The numerically detected convergence rates under both norms are
optimal, which match the theoretical analysis. 
For this test, the condition numbers of the resulting linear systems
are shown in Tab.~\ref{tab_cnexample4}. 
It can be observed that the condition number $\kappa(A_m)$ grows at the
speed $O(h^{-2})$, while the condition number $\kappa(A_0^{-1} A_m)$
increases only marginally as the mesh size $h$ tends to zero.
In Tab.~\ref{tab_ex4steps}, we list the iteration counts for PCG/CG
solvers. 
In accordance with the condition number, the PCG
iteration counts for convergence are numerically observed to be independent of the mesh
size.

\begin{figure}[htbp]
  \centering
  \includegraphics[width=0.32\textwidth]{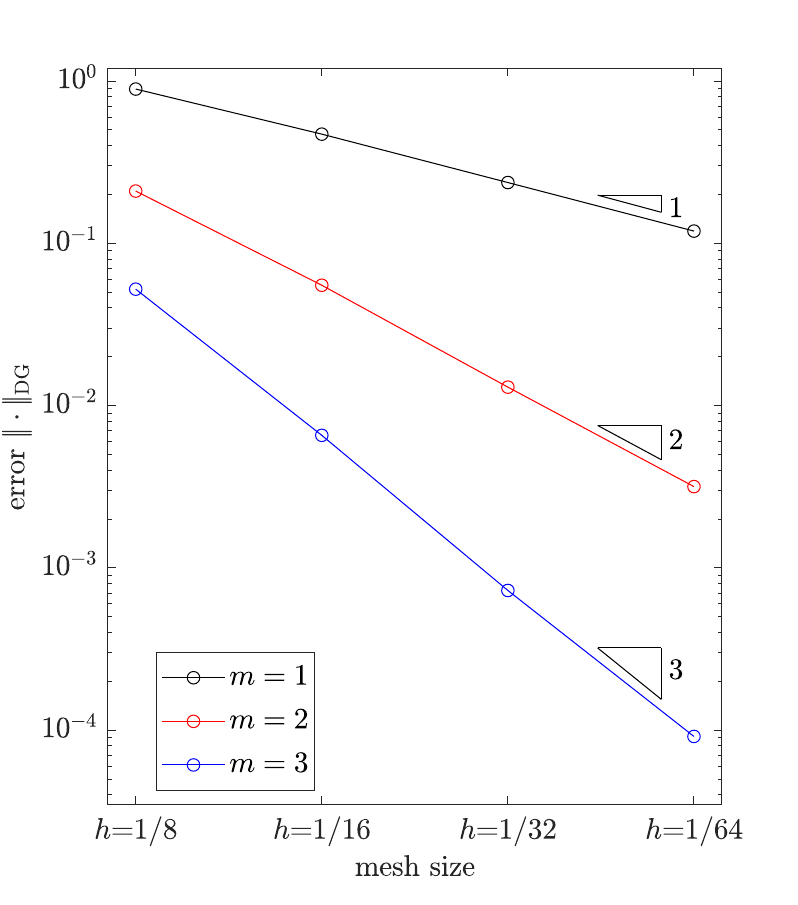}
  \hspace{50pt}
  \includegraphics[width=0.32\textwidth]{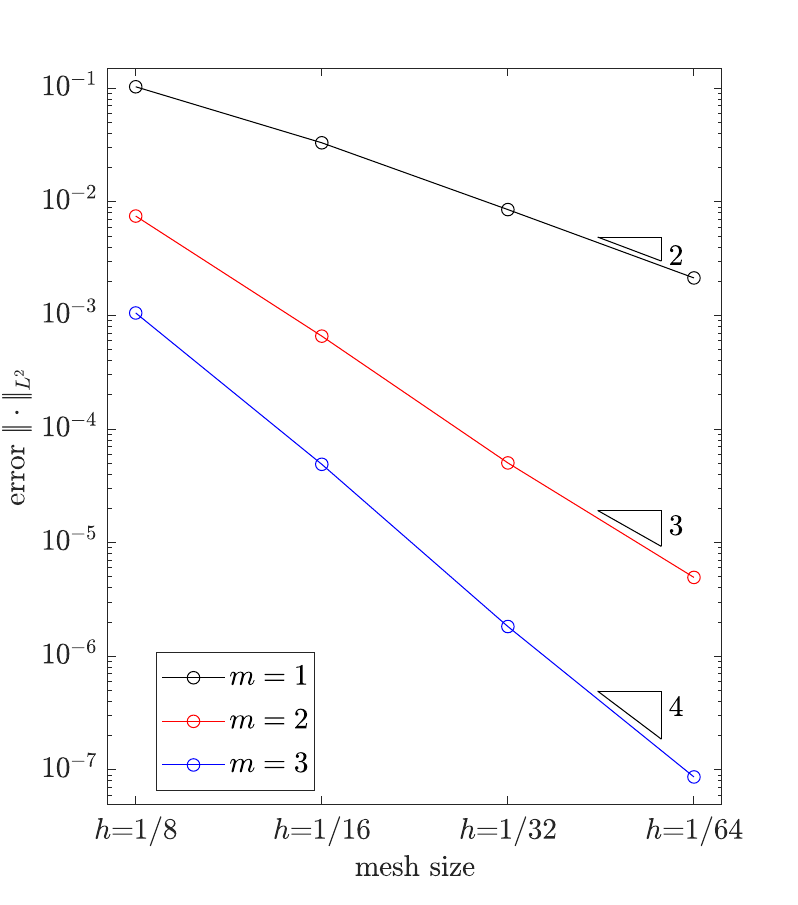}
  \caption{The numerical errors under the energy norm (left)/$L^2$ norm
  (right) in Example 4.}
  \label{fig_ex4err}
\end{figure}

\begin{table}[htb]
  \centering
  \renewcommand{\arraystretch}{1.28}
  \scalebox{0.78}{
  \begin{tabular}{p{0.5cm}|p{2.0cm}|p{1.8cm}|p{1.8cm}|p{1.8cm}| 
    p{1.8cm}}
        \hline\hline
    \multicolumn{2}{c|}{\diagbox[width=3.2cm, height=0.7cm]{$m$}{$h$}} 
    & 1/8 & 1/16 & 1/32 & 1/64 \\
    \hline
    \multirow{2}{*}{1} & $\kappa(A_{0}^{-1} A_m)$ & 
    19.68 & 25.20 & 31.65 & 36.98 \\ 
    \cline{2-6}
    &$\kappa(A_m)$ & 6.241e+2 & 2.259e+3 & 9.109e+3 & 3.583e+4 \\ 
    \hline
    \multirow{2}{*}{2} & $\kappa(A_{0}^{-1} A_m)$ &
    76.49 & 103.64 & 108.37 & 143.35 \\ 
    \cline{2-6}
    &$\kappa(A_m)$ 
    & 1.205e+3 & 4.650e+3 & 1.504e+4 & 5.910e+4 \\ 
    \hline
    \multirow{2}{*}{3} & $\kappa(A_{0}^{-1} A_m)$ 
    & 385.67 & 625.88 & 646.12 & 720.47 \\ 
    \cline{2-6}
    &$\kappa(A_m)$ & 3.025e+3 & 1.114e+4 & 3.416e+4 & 1.320e+5 \\ 
    \hline \hline
    \end{tabular}
   }
  \caption{The condition numbers of preconditioned/nonpreconditioned
  linear systems in Example 4.}
  \label{tab_cnexample4}
\end{table}

\begin{table}[htb]
  \centering
  \renewcommand{\arraystretch}{1.28}
  \scalebox{0.78}{
  \begin{tabular}{p{0.25cm}|p{3.5cm}|p{1.35cm}|p{1.35cm}|p{1.35cm}|
    p{1.35cm}}
    \hline\hline
    \multirow{2}{*}{$m$} & \diagbox[width=3.9cm, height=0.9cm]{Preconditioner}{$h$}
    & 1/8 & 1/16 & 1/32 & 1/64 \\ \hline
    \multirow{3}{*}{1} 
    & Algorithm \ref{alg_mgsolver} for $A_{0}^{-1}$ 
    & 47	& 55	& 65	& 72 \\ 
    \cline{2-6}
    & Algorithm \ref{alg_mgsolverII}  for $A_{0}^{-1}$ 
    & 47	& 54	& 61	& 64 \\ 
    \cline{2-6}
    & Identity  & 128 & 622 & 1258 & 2405\\ \hline        
    \multirow{3}{*}{2} 
    & Algorithm \ref{alg_mgsolver} for $A_{0}^{-1}$ 
    & 85	& 107	& 115	& 119 \\ 
    \cline{2-6}
    & Algorithm \ref{alg_mgsolverII} for $A_{0}^{-1}$
    & 85	& 107	& 114	& 113 \\ 
    \cline{2-6}
    & Identity 
    & 101 & 900 & 1641 & $>3000$ \\ \hline        
    \multirow{3}{*}{3} 
    & Algorithm \ref{alg_mgsolver} for $A_{0}^{-1}$ 
    & 162	& 196	& 211	& 231 \\ 
    \cline{2-6}
    & Algorithm \ref{alg_mgsolverII} for $A_{0}^{-1}$
    & 162	& 195	& 208	& 218 \\ 
    \cline{2-6}
    & Identity
    & 110 & 1449 & $>3000$ & $>3000$ \\ 
    \hline\hline
  \end{tabular}
  }
  \caption{The iteration counts for PCG/CG methods in Example 4.}
  \label{tab_ex4steps}
\end{table}

\paragraph{\textbf{Efficiency Comparison}.}
The main feature of our proposed method is that each element has only
one degree of freedom, independent of the degree $m$.
Hughes et al. \cite{hughes2000comparison} point out that the number of
unknowns for a discretized problem is a reasonable indicator for
evaluating the computational efficiency of a numerical method.
We make a numerical comparison between the proposed reconstructed
method and the unfitted discontinuous Galerkin method
\cite{Gurkan2019stabilized}. 
Both methods are tested on Example 1 and Example 3, as the
two-dimensional and three-dimensional test cases, respectively. 
In Fig.~\ref{fig_compare}, we plot the $L^2$ numerical errors 
for both methods against the number of degrees of freedom with 
different degrees. 
It can be seen that in two and three dimensions, the reconstructed
method always has better efficiency, in the sense that 
it uses fewer degrees of freedom to achieve a comparable
numerical error. 
In Tab.~\ref{tab_compare}, we list the ratio of the number of degrees
of freedom required by the two methods to reach the same numerical
error. 
The advantage in the approximation efficiency becomes more remarkable
for higher-order accuracy.

\begin{figure}[htb]
  \centering
  \includegraphics[width=.32\textwidth]{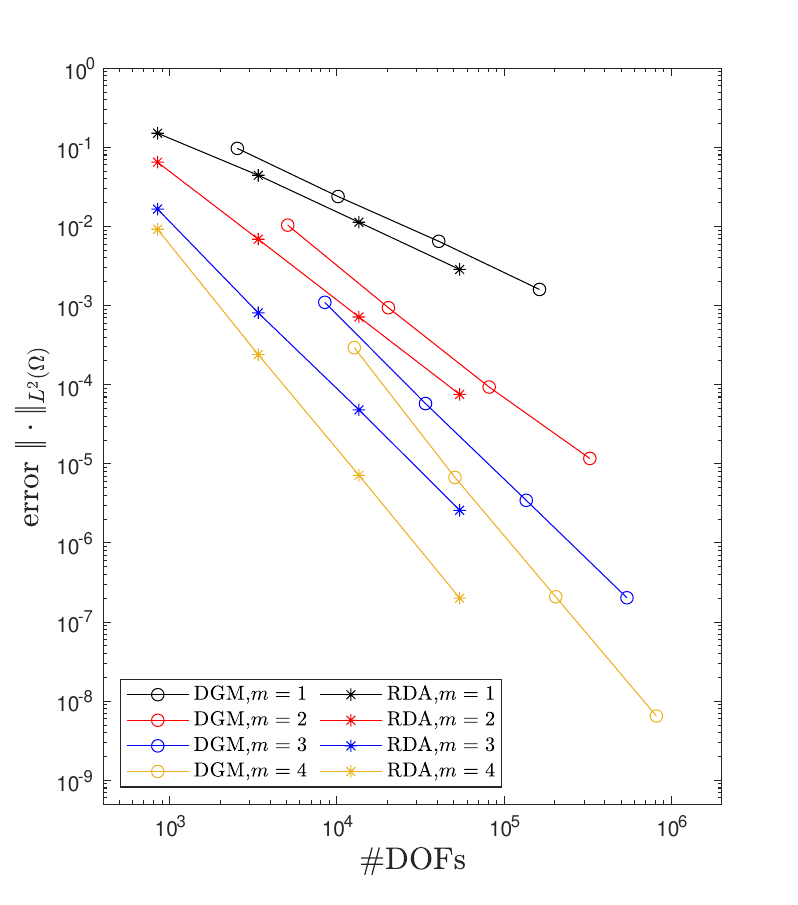}
  \hspace{50pt}
  \includegraphics[width=.32\textwidth]{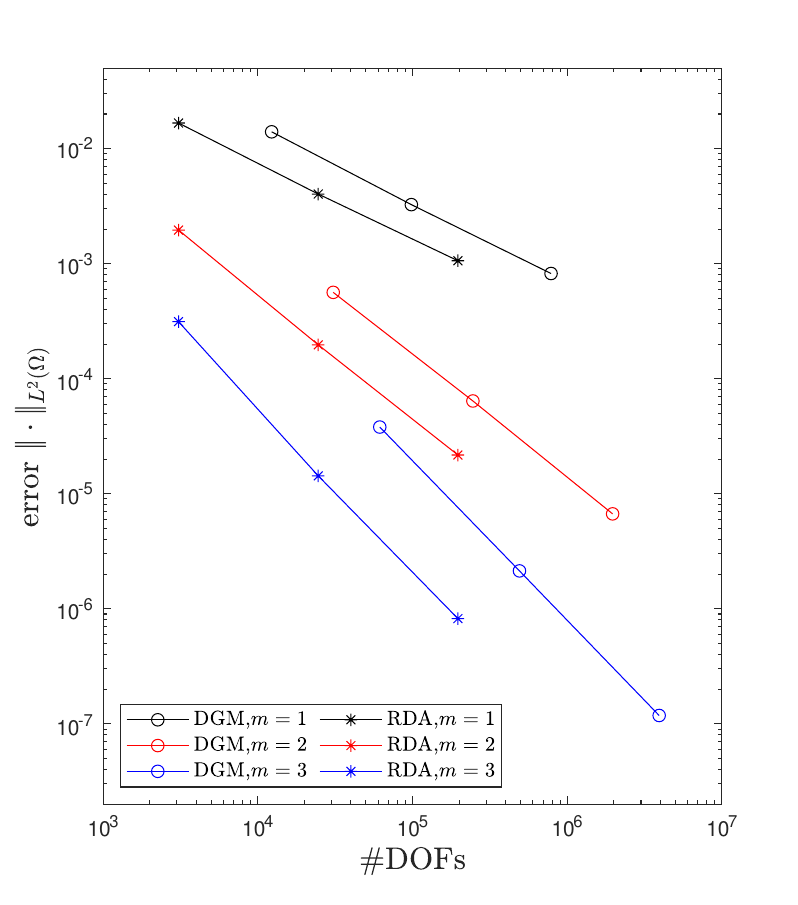}
  \caption{The $L^2$ numerical errors versus number of degrees of
  freedom for RDA/DG methods in two and three dimensions.}
  \label{fig_compare}
\end{figure}

\begin{table}[htp]
\begin{minipage}[t]{0.3\textwidth}
  \centering
  \renewcommand{\arraystretch}{1.25}
  \scalebox{0.8}{
  \begin{tabular}{p{1.8cm}|p{0.8cm}|p{0.8cm}|p{0.8cm}|p{0.8cm}}
    \hline\hline
    $m$ & 1 & 2 & 3 & 4 \\ 
    \hline
    RDA/DG & 84\% & 43\% & 37\% & 31\% \\ 
    \hline\hline
  \end{tabular}}
\end{minipage}
\hspace{2cm}
\begin{minipage}[t]{0.3\textwidth}
  \centering
  \renewcommand{\arraystretch}{1.25}
  \scalebox{0.8}{
  \begin{tabular}{p{1.8cm}|p{0.8cm}|p{0.8cm}|p{0.8cm}}
    \hline\hline
    $m$ & 1 & 2 & 3 \\
    \hline
    RDA/DG & 61\% & 36\% & 23\% \\
    \hline\hline
  \end{tabular}}
\end{minipage}
\caption{The ratio of the number of degrees of freedom involved in
  RDA/DG methods when achieving a comparable $L^2$ error in
  two and three dimensions.}
\label{tab_compare}
\end{table}


\section{Conclusions}
\label{sec_conclusion}
In this paper, we develop a preconditioned unfitted finite element
method for the elliptic interface problem, based on the reconstructed
discontinuous approximation. 
The approximation space is constructed by a patch reconstruction
process, where each element possesses only one degree of freedom. 
Thanks to the patch reconstruction, the stability near the interface
is naturally ensured, and we prove the optimal convergence rates.
Moreover, we construct a preconditioner from the piecewise constant
space for the high-order space, and the preconditioned linear system
is proven to be optimal and independent of how the interface cuts the
mesh.
For the lowest-order system, we propose a multigrid algorithm, which
can be used as a preconditioner for the high-order system. 
Numerical experiments in two and three dimensions illustrate the
convergence rates of the discretization and the efficiency of the
preconditioning method.



\begin{appendix}
  \section{}
  \label{sec_app}
  Here, we present more details and numerical tests for the constants
  $\Lambda_{m, K, i}$ and $\Lambda_m$. 
  We first briefly introduce their computing methods for a given
  element patch $\mS_K^i$. 
  For the element $K \in \MThi$, let $p_1, p_2, \ldots, p_l$ be a
  set of orthonormal basis functions in $\mP_m(K)$ under the
  inner product $(\cdot, \cdot)_{L^2(K)}$. 
  For any polynomial $q \in \mP_m(K)$, it can be expressed by
  coefficients $\bm{\alpha} = \{a_j\}_{j = 1}^l \in \mb{R}^l$ that $q
  = \sum_{j =1}^l a_j p_j$.
  The polynomials $q$ and $p_j$ can be directly extended to the domain
  $\mD_K^i$. 
  From the definition \eqref{eq_Lambda}, the constant $\Lambda_{m, K,
  i}$ can be equivalently written into an algebraic form, 
  \begin{displaymath}
    \Lambda_{m, K, i}^2 = \max_{\bm{\alpha} \in \mb{R}^l}
    \frac{ | \bm{\alpha} |_{l^2}^2 }{ h_K^d \bm{\alpha}^T B_{K, i}
    \bm{\alpha}}, 
  \end{displaymath}
  where the matrix $B_{K, i}$ is given as 
  \begin{displaymath}
    B_{K, i} = \{b_{jk}^i \}, \quad b_{jk}^i = \sum_{K' \in \mS_K^i}
    p_j(\bm{x}_{K'})p_k(\bm{x}_{K'}).
  \end{displaymath}
  Then, there holds $\Lambda_{m, K, i} = (h_K^d \sigma_{\min}(B_{K,
  i}))^{-1/2}$, where $\sigma_{\min}(\cdot)$ denotes the smallest
  singular value for a given matrix.
  Consequently, the constant $\Lambda_m$ can be readily obtained by
  computing the smallest singular value for all $B_{K, i}$.

  In Figure~\ref{fig_lambda2d}, we plot the stability constant $\Lambda_m$
  against the given threshold $\mN_m$ on the mesh with $h = 1/40$ for
  degrees $1 \leq m \leq 4$. Here, $\mN_m$ is a prespecified parameter
  that controls the size of the element patch. 
  It is clear that $\Lambda_m$ is nearly constant when the threshold
  $\mN_m$ is large enough, which verifies the theoretical prediction
  given in Section \ref{sec_space}.
  From the numerical observation, $\mN_m$ is close to $\dim(\mb{P}_m)
  + m + 1$ in two dimensions.
  In Figure~\ref{fig_lambda3d}, we depict the constant $\Lambda_m$
  against the threshold $\mN_m$ in three dimensions with the mesh size
  $h = 1/16$. 
  Similarly, $\Lambda_m$ is bounded and increases very slightly if
  $\mN_m$ is larger than a specific value.
  In three dimensions, the threshold $\mN_m$ is approximately
  $1.5\dim(\mb{P}_m)$. 
  In the computer implementation, we can use the condition 
  $\max_{K \in \MThi} \Lambda_{m, K, i} \leq 5\min_{K \in \MThi}
  \Lambda_{m, K, i}$ as the criterion to check whether the element
  patches are appropriate.
  If this condition is violated, we increase the threshold until it is
  satisfied.

  \begin{figure}[htbp]
    \centering
    \includegraphics[width=0.23\textwidth]{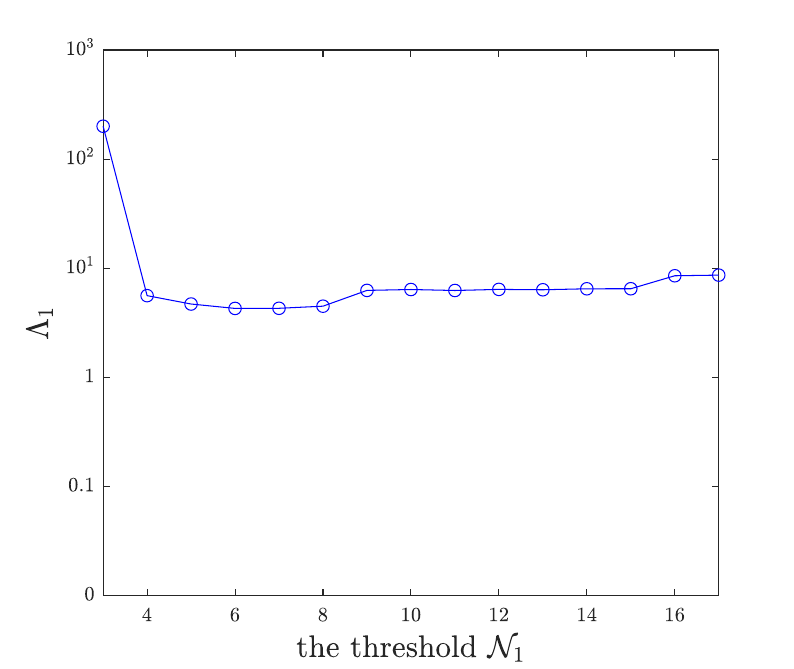}
    \hfill
    \includegraphics[width=0.23\textwidth]{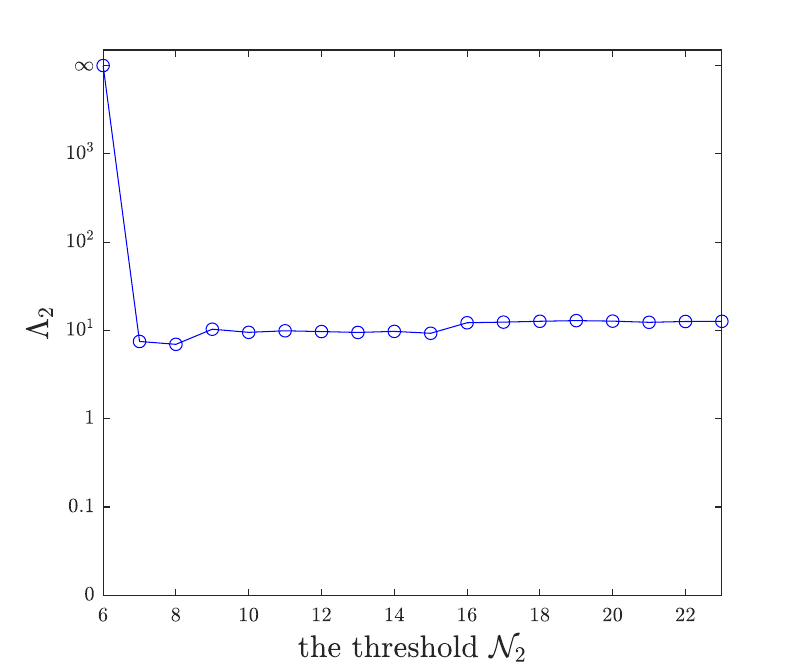}
    \hfill
    \includegraphics[width=0.23\textwidth]{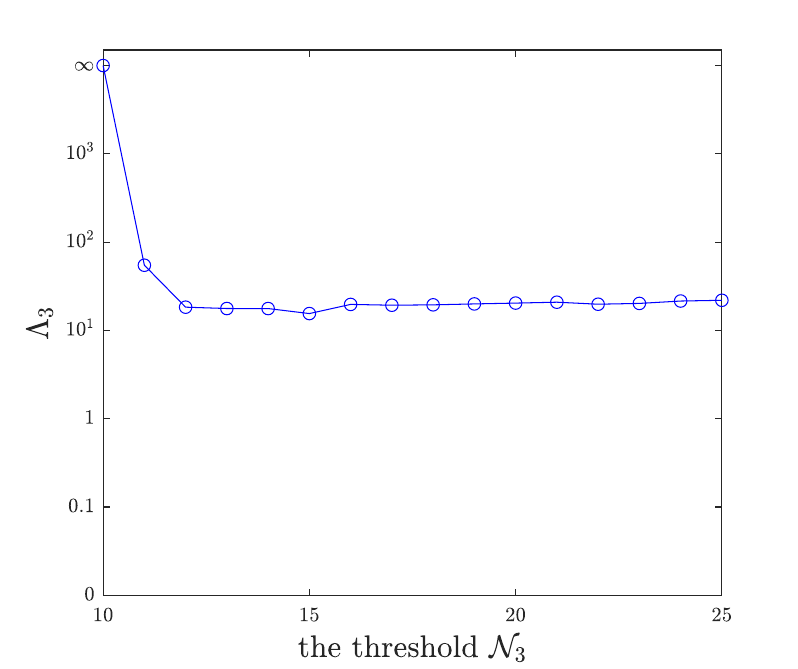}
    \hfill
    \includegraphics[width=0.23\textwidth]{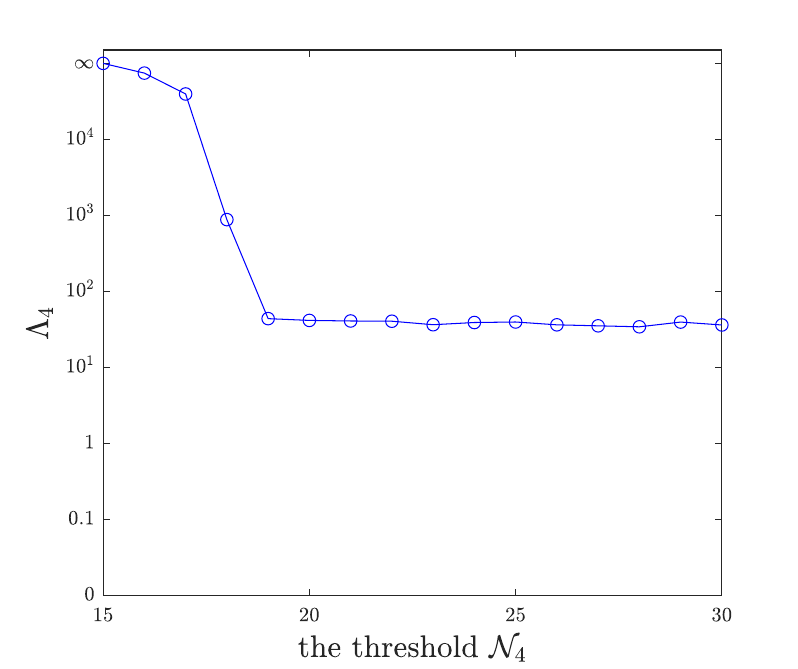}
    \caption{The stability constant $\Lambda_m$ in two dimensions.}
    \label{fig_lambda2d}
  \end{figure} 

  \begin{figure}[htbp]
    \centering
    \includegraphics[width=0.23\textwidth]{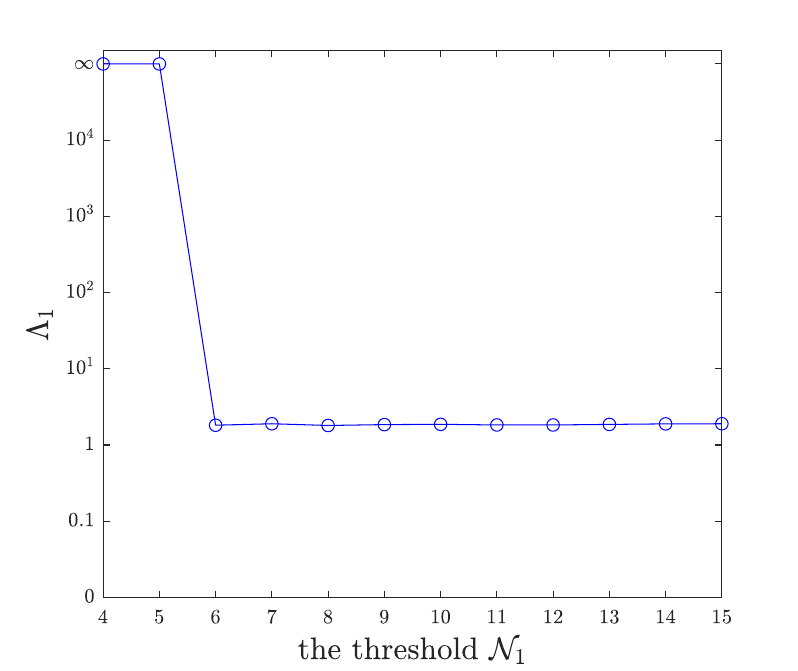}
    \hspace{10pt}
    \includegraphics[width=0.23\textwidth]{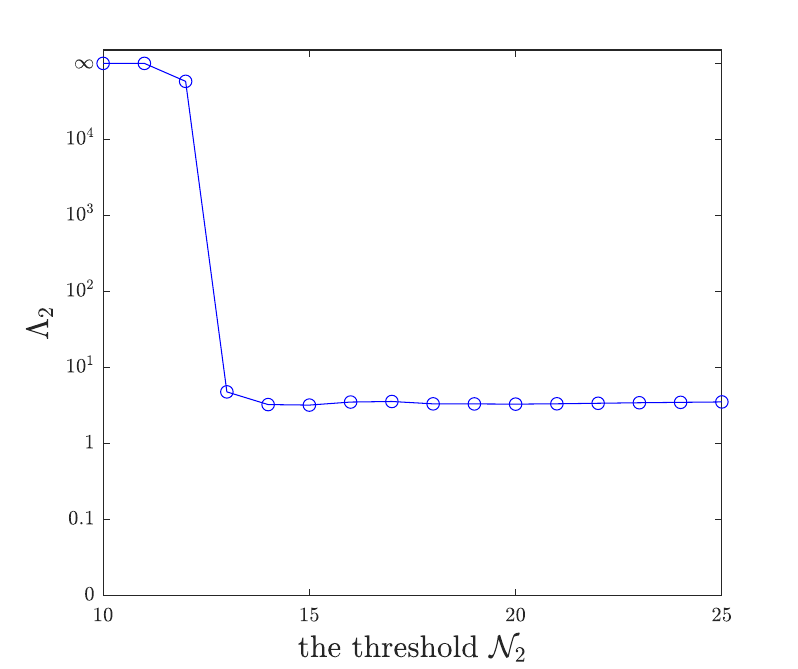}
    \hspace{10pt}
    \includegraphics[width=0.23\textwidth]{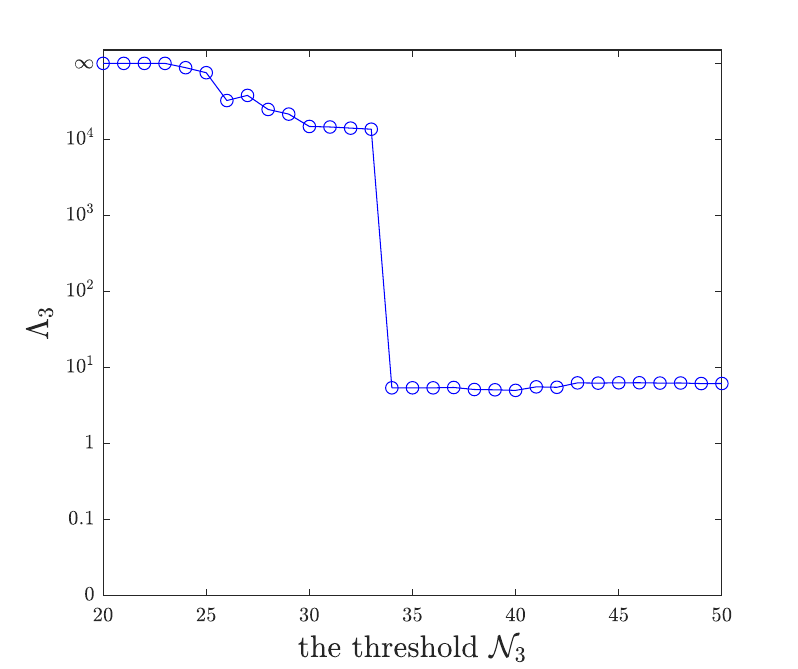}
    \caption{The stability constant $\Lambda_m$ in three dimensions.}
    \label{fig_lambda3d}
  \end{figure}

\end{appendix}

\bibliographystyle{amsplain}
\bibliography{../ref}

\end{document}